\title[Higher order approximations for degenerate equations]{Higher order spatial approximations for degenerate parabolic stochastic partial differential equations}
\author[E.J. Hall]{Eric Joseph Hall}
\address{School of Mathematics\\University of Edinburgh\\King's Buildings\\Edinburgh, EH9 3JZ\\UK}
\email{e.hall@ed.ac.uk}
\numberwithin{equation}{section}
\theoremstyle{plain}
\newtheorem{thm}{Theorem}[section]           
\newtheorem{cor}[thm]{Corollary}             
\newtheorem{lem}[thm]{Lemma}                 
\newtheorem{asm}[thm]{Assumption}            
\theoremstyle{definition}
\theoremstyle{remark}
\newtheorem{rmk}[thm]{Remark}                
\newtheorem{eg}{Example}                     
\newcommand{\mexp}{\mathrm{E}}
\begin{document}

\begin{abstract}
We consider an implicit finite difference scheme on uniform grids in time and space for the Cauchy problem for a second order parabolic stochastic partial differential equation where the parabolicity condition is allowed to degenerate. Such equations arise in the nonlinear filtering theory of partially observable diffusion processes. We show that the convergence of the spatial approximation can be accelerated to an arbitrarily high order, under suitable regularity assumptions, by applying an extrapolation technique. 
\end{abstract}

\subjclass[2000]{65B05, 60H15, 35K65}
\keywords{Richardson's method, finite differences, Cauchy problem, stochastic partial differential equations, degenerate parabolic}

\maketitle

\section{Introduction}\label{>>sec: Introduction}

Motivated by the nonlinear filtering theory, we consider the Cauchy problem for the stochastic partial differential equation (SPDE)
\begin{equation}\label{eqn: zakai equation}
\mathrm{d} u = \left(a^{\alpha\beta}D_{\alpha}D_{\beta} u + f\right) \mathrm{d}t + \sum_{\rho=1}^{d_{1}} \left( b^{\alpha \rho} D_{\alpha} u + g^{\rho} \right) \mathrm{d}w^{\rho}
\end{equation}
with initial condition $u(0) = u_{0}$ where $(w^{\rho})_{\rho=1}^{d_{1}}$ is a $d_{1}$-dimensional Wiener martingale for integer $d_{1} \geq 1$ and the summation convention is used with respect to the repeated indices $\alpha, \beta \in \{0, \dots, d\}$ for integer $d \geq 1$. Here $D_{\alpha} := \partial/\partial x_{\alpha}$, for $\alpha \in \{1, \dots, d\}$, denotes the spatial derivative with respect to the direction $\alpha$ and we let $D_{0}$ be the identity. A special case of this equation, when the free terms $f$ and $g$ vanish, arises as the equation for the unnormalized conditional density of a signal process with respect to an observation process in the nonlinear filtering theory and is often referred to as the Zakai equation (see \cite{Kunita:1981,Pardoux:1979,Zakai:1969}). The behavior of this equation is governed by the quadratic form $$\sum_{\alpha, \beta =1}^{d} A^{\alpha\beta} z_{\alpha}z_{\beta}$$
for $A^{\alpha\beta} := 2 a^{\alpha\beta} - b^{\alpha\rho} b^{\beta\rho}$ and $z \in \mathbf{R}^{d}$. In \cite{KrylovRozovskii:1982}, it is emphasized that in the setting of the nonlinear filtering theory one is only guaranteed the nonnegative definiteness of the matrices $A$, that is, when \eqref{eqn: zakai equation} satisfies a \emph{degenerate} stochastic parabolicity condition (\emph{cf}.\ \cite{KrylovRozovskii:1977,Pardoux:1975th} where the solvability of this equation is studied under the \emph{uniform nondegeneracy} of the matrices $A$). In application, these problems are high dimensional in nature and the solutions are required on-line. Therefore accurate and efficient numerical methods are desired for solving the Cauchy problem for \eqref{eqn: zakai equation} under a degenerate parabolicity assumption.

The present manuscript concerns the accuracy of a space-time, that is, a fully discretized, finite difference scheme on uniform grids in time and space for approximating the solution to the Cauchy problem for \eqref{eqn: zakai equation} under the degenerate parabolicity assumption. In general the rate of convergence of finite difference schemes is known to be unsatisfactory in high dimensional settings. We prove that the rate of convergence of the spatial approximation for our space-time scheme can be accelerated to an arbitrarily high order with respect to the computational effort by applying an extrapolation technique. That is, we show that the rate of the strong convergence of the spatial approximation to the temporal discretization can be accelerated to any order of accuracy if the initial conditions, coefficients, and free terms are sufficiently smooth in space and the matrices $A$ can be decomposed as 
$$A = \sigma \sigma^{T}$$
for matrices $\sigma$ sufficiently smooth in space. While the requirement that the $A$ admit such a decomposition is quite restrictive, this condition is satisfied in the nonlinear filtering problem even in the general case of correlated signal and observation noises when the diffusion coefficients of the signal noise are sufficiently smooth. 

The extrapolation technique that we employ to obtain higher order convergence is often referred to as \emph{Richardson's method}, after L.F.\ Richardson who suggested the method for accelerating the convergence of finite difference approximations for certain partial differential equations (PDEs) (see \cite{Richardson:1911,RichardsonGaunt:1927}). The method relies on the existence of an asymptotic expansion for the error between the approximate and true solutions to a continuous problem in powers of the discretization parameter. Richardson observed that by taking appropriate weighted averages of the approximation at different mesh sizes certain lower order terms in the expansion vanish yielding a higher order rate of convergence. Therefore, it is important to give sufficient conditions under which such expansions exists. We emphasize that not only does the existence of the asymptotic expansion allow us to apply Richardson's method to an arbitrarily high order, but also it allows us to measure the rate of convergence in the supremum norm. Richardson's method has been thoroughly studied in the literature, see for example the book \cite{MarchukShaidurov:1983} which provides a study of Richardson's method for finite difference schemes for deterministic PDEs and the survey articles \cite{Brezinski:2000,Joyce:1971} on convergence acceleration methods. Part I of the book \cite{Sidi:2003} concerns Richardson's method and algorithms for its implementation for PDEs; our results are of a more theoretical nature and much work still needs to be done to implement them.

While finite difference schemes for PDEs and, to a lesser extent, for SPDEs are well studied in the literature (for example, see \cite{Thomee:1990,JentzenKloeden:2009} and the references therein) there are only a few results for degenerate parabolic equations and even fewer results concerning convergence acceleration for degenerate equations. Sharp rates of convergence are given in \cite{DongKrylov:2005} for monotone finite difference schemes for possibly degenerate parabolic and elliptic deterministic PDEs. In \cite{GyongyKrylov:2011} Richardson's method is applied to monotone finite difference schemes for possibly degenerate parabolic deterministic PDEs to accelerate the rate of convergence. Recently, in \cite{Gyongy:2011} a rate of convergence is given for a class of finite difference methods, that approximate in space via finite differences while allowing time to vary continuously, for degenerate parabolic SPDEs and sufficient conditions are given for accelerating the rate of convergence for the approximation in space. 

The current manuscript extends the results of \cite{Gyongy:2011} to a fully discretized scheme. We also mention \cite{Hall:2012}, where results similar to those of the present manuscript are given under the \emph{strong} parabolicity condition. A principle contribution of the present work is to provide estimates in the supremum norm in appropriate spaces for the solutions to the space-time scheme and the discretization in time under the degenerate parabolicity condition. The methods used to provide the requisite estimates in \cite{Gyongy:2011} are not tenable in the discrete time case. Further, we mention that we have chosen to consider here implicit schemes as we believe that these are favored from a practical standpoint and because such schemes are unconditionally stable. We note that, to the author's knowledge, there are no results that give the rate of convergence of the implicit time scheme for SPDEs under a degenerate parabolicity assumption and it will be the subject of a future work to give such a rate of convergence for the implicit time scheme as well as more general methods. 

The paper is outlined as follows. In the next section, we begin by presenting our time scheme and our space-time scheme for approximating the solution to the Cauchy problem for \eqref{eqn: zakai equation} as well as some preliminaries and assumptions. We then state our main results. Theorem \ref{thm: expansion} gives sufficient conditions for the existence of an asymptotic expansion for the error between the space-time approximation and the temporal discretization in powers of the spatial mesh size. Theorem \ref{thm: generalized expansion} gives sufficient conditions for a generalization of Theorem \ref{thm: expansion}, namely, the existence of such an expansion for differences of the solution. Then Theorems \ref{thm: acceleration} and \ref{thm: acceleration for derivatives}, using the aforementioned expansions, give an accelerated rate of convergence for the spatial approximation and for derivatives of the spatial approximation, respectively. The proof of Theorem \ref{thm: generalized expansion}, and hence Theorem \ref{thm: expansion}, is given in Section \ref{sec: Proof of Main Results} after some preliminary estimates are proven in Section \ref{>>sec: Auxilliary Results}.
 
We end this section by introducing some notation that will be used throughout this work. For integer $d \geq 1$, let $\mathbf{R}^{d}$ be the space of Euclidean points $x = (x_{1}, \dots, x_{d})$. We denote the $\sigma$-algebra of Boreal subsets of $\mathbf{R}^{d}$ by $\mathcal{B}(\mathbf{R}^{d})$. Recall that we denote by $D_{\alpha} := \partial/\partial x_{\alpha}$ for $\alpha \in \{1, \dots, d\}$ the spatial derivative with respect to the direction $\alpha$ and let $D_{0}$ be the identity. For an integer $m \geq 0$, we denote by $W_{2}^{m} := W_{2}^{m}(\mathbf{R}^{d})$ the usual Hilbert-Sobolev spaces of function on $\mathbf{R}^{d}$, defined as the closure of $C_{0}^{\infty}(\mathbf{R}^{d})$ functions $\phi: \mathbf{R}^{d} \to \mathbf{R}^{d}$ in the norm
\begin{equation*}
\| \phi \|_{m}^{2} := \sum_{|\rho| \leq m} \int_{\mathbf{R}^{d}} |D^{\rho} \phi(x)|^{2} \, \mathrm{d}x,
\end{equation*}
where $D^{\rho} = D^{\rho_{1}}_{1} \dots D^{\rho_{d}}_{d}$ for a multiindex $\rho = (\rho_{1}, \dots, \rho_{d})$ of length $|\rho| = \rho_{1} + \dots + \rho_{d}$. For an integer $s \geq 0$, we will use the notation $D^{s}\phi$ to denote the collection of all $s$th order spatial derivatives of $\phi$, that is, $D^{s} \phi := \{ D^{\rho}\phi : |\rho| \leq s\}$ for functions $\phi = \phi (x)$ for $x \in \mathbf{R}^{d}$. We note that for $L^{2} := L^{2}(\mathbf{R}^{d}) = W_{2}^{0}$ we will denote the norm by $\| \cdot{} \|_{0}$ and we will use $( \cdot, \cdot)$ to denote the usual inner product in that space. Let $(\Omega, \mathcal{F}, P)$ be a complete probability space and let $\mathcal{F}(t)$, $t \geq 0$, be an increasing family of sub-$\sigma$-algebras of $\mathcal{F}$ such that $\mathcal{F}(0)$ is complete with respect to $(\mathcal{F}, P)$. For a fixed integer $d_{1} \geq 1$ and a constant $T \in (0,\infty)$ let $(w^{\rho})_{\rho=1}^{d_{1}}$ be a given sequence of independent Wiener processes carried by the complete stochastic basis $(\Omega, \mathcal{F}, (\mathcal{F}(t))_{t \geq 0}, P)$. For the fundamentals of the nonlinear filtering theory, we refer the reader to the book \cite{BainCrisan:2009} and for basic notions and results from the theory of SPDEs we refer the reader to \cite{Rozovskii:1990}.

We collect the following notation for our discretizations and differences. For fixed $\tau \in (0,1)$, we partition $[0,T]$ into a fixed integer $n \geq 0$ with mesh size $\tau$ obtaining the time grid 
$$\left\{i\tau; i \in \{0,1,\dots,n\}, \tau n = T\right\}.$$
We define $\phi_{i} := \phi(i\tau)$ for functions $\phi$ depending on $t \in [0,T]$. In particular for $i \in \{1, \dots, n\}$, we will use the notation 
$$\xi^{\rho}_{i} := w^{\rho}_{i} - w^{\rho}_{i-1}$$
to denote the increments of the Weiner process for each $\rho \in \{1, \dots, d_{1}\}$ and $\mathcal{F}_{i} := \mathcal{F}(i\tau)$ to denote the filtration. For $h \in \mathbf{R} \setminus \{0\}$ and a finite subset $\Lambda \subset \mathbf{R}^{d}$ containing the origin we define the space grids 
$$\mathbf{G}_{h} := \left\{ \lambda_{1} h + \dots + \lambda_{p}h; p \in \{1, 2, \dots \}, \lambda_{i} \in \Lambda \cup (-\Lambda)\right\}$$
with mesh size $|h|$. We denote $\Lambda_{0} := \Lambda \setminus \{0\}$. For all $h \in \mathbf{R}\setminus \{0\}$ we define first order and first order symmetric differences by
\begin{gather*}
\delta_{h,\lambda} := \frac{1}{h} \left( T_{h,\lambda} - I \right) \quad \text{and} \quad \delta_{\lambda} = \delta^{h}_{\lambda} := \frac{1}{2} \left( \delta_{h,\lambda} + \delta_{-h,\lambda} \right) = \frac{1}{2h} \left( T_{h,\lambda} - T_{h,-\lambda}\right),
\end{gather*}
respectively, for $\lambda \in \mathbf{R}^{d}\setminus\{0\}$ where for all $h \in \mathbf{R}$ we define the shift operator
\begin{gather*}
T_{h,\lambda} \phi(x) := \phi(x+h\lambda)
\end{gather*}
for functions $\phi$ on $\mathbf{R}^{d}$. We define $\delta_{h,0} := I$ and $\delta_{0} := I$. We also adopt the notation $N = N(\cdot{})$ for a constant $N$ depending only on the parameters given as arguments. For basic notions from numerical analysis we refer the reader to \cite{MortonMayers:1994,RichtmyerMorton:1967}.

\section{Main results}\label{>>sec: Main results}

To accelerate the rate of convergence of the spatial approximation for a space-time finite difference scheme, we will consider the error between the space-time approximation and the temporal discretization, the latter of which is a continuous problem in the spatial variable. Therefore we begin by considering a discretization in time for \eqref{eqn: zakai equation}, namely the implicit Euler method. We then replace the differential operators with difference operators in this temporal discretization, yielding a space-time scheme. We then state our results, the two main results being Theorem \ref{thm: expansion}, which gives sufficient conditions for the existence of the desired asymptotic expansion for the error between the space-time approximation and the temporal discretization in powers of the spatial mesh size, and Theorem \ref{thm: acceleration}, which uses the expansion to obtain an arbitrarily high order of convergence via Richardson's method.

For each fixed $\tau \in (0,1)$, we consider 
\begin{equation}\label{eqn: time scheme}
v_{i}(x) = v_{i-1}(x) + \left( \mathcal{L}_{i} v_{i}(x) + f_{i}(x) \right)\tau + \sum_{\rho=1}^{d_{1}} \left( \mathcal{M}^{\rho}_{i-1} v_{i-1}(x) + g^{\rho}_{i-1}(x) \right) \xi^{\rho}_{i}
\end{equation}
for $\omega \in \Omega$, $x \in \mathbf{R}^{d}$, and $i \in \{1, \dots, n\}$ with initial condition $v_{0}(x) = v_{0}$ where $\mathcal{L}_{i}$ and $\mathcal{M}_{i}^{\rho}$ are second order and first order differential operators given by $\mathcal{L}_{i} \phi := a^{\alpha\beta}_{i}(x) D_{\alpha} D_{\beta} \phi$ and $\mathcal{M}^{\rho}_{i} \phi := b^{\alpha \rho}_{i}(x) D_{\alpha}\phi$, for $\rho \in \{1, \dots, d_{1}\}$, where the summation convention is used with respect to the repeated indices $\alpha,\beta \in \{0,1,\dots,d\}$. We assume that the given $a^{\alpha\beta}_{i} := a^{\alpha\beta}_{i}(x)$ and $b^{\alpha}_{i} := (b^{\alpha}_{i}(x))_{\rho=1}^{d_{1}}$ are real-valued and $\mathbf{R}^{d_{1}}$-valued, respectively, $\mathcal{F}_{i}\otimes \mathcal{B}(\mathbf{R}^{d})$-measurable functions for $\omega \in \Omega$ and $i \in \{0, \dots, n\}$ for all $\alpha,\beta \in \{0, \dots, d\}$. The free terms $f_{i} := f_{i}(x)$ and $g^{\rho}_{i} := g^{\rho}_{i}(x)$, for $\rho \in \{1, \dots, d_{1}\}$, are $\mathcal{F}_{i} \otimes \mathcal{B}(R^{d})$-measurable functions for every $\omega \in \Omega$, $x \in \mathbf{R}^{d}$, and $i \in \{0, \dots, n\}$. The discretization \eqref{eqn: time scheme} represents an implicit Euler method for approximating the solution to \eqref{eqn: zakai equation} in time. Solutions to \eqref{eqn: time scheme} with appropriate initial conditions are understood as sequences of $W^{1}_{2}$-valued random variables satisfying \eqref{eqn: time scheme} in a weak sense in $W^{-1}_{2}$.  

As discussed in the introduction, we consider the following degenerate stochastic parabolicity condition, necessary for the well-posedness of \eqref{eqn: zakai equation} and hence \eqref{eqn: time scheme}. Note that this is a weaker condition than the \emph{strong stochastic parabolicity} condition which assumes the uniform nondegeneracy of the quadratic form (\emph{cf}.\ Assumption 2.2 in \cite{Hall:2012} for example). 

\begin{asm}\label{asm: stochastic parabolicity}
For all $\omega \in \Omega$, $i \in \{1, \dots, n\}$, $x \in \mathbf{R}^{d}$, and $z = (z_{1},\dots, z_{d}) \in \mathbf{R}^{d}$
$$\sum_{\alpha,\beta = 1}^{d} \left(2 a^{\alpha\beta}_{i} - b^{\alpha \rho}_{i} b^{\beta \rho}_{i} \right) z_{\alpha} z_{\beta} \geq 0, $$
that is, the quadratic form is nonnegative definite (positive semi-definite).
\end{asm}

To formulate existence and uniqueness results, as well as estimates, for the solution to \eqref{eqn: time scheme} we also require some smoothness assumptions on the coefficients, the free terms, and the initial conditions. Let $m \geq 0$ be an integer.

\begin{asm}\label{asm: coefficients of the differential operators}
For each $\omega \in \Omega$ and $i \in \{0, \dots, n\}$, the functions $a^{\alpha,\beta}_{i}$ and the functions $a^{0 \alpha}_{i}$, $a^{\alpha 0}_{i}$, and $a^{00}_{i}$ are, respectively, $(m + 1) \vee 2$ times and $m + 1$ times continuously differentiable in $x$ for $\alpha,\beta \in \{1, \dots,d\}$. For each $\omega \in \Omega$ and $i \in \{0, \dots, n\}$, the functions $b^{\alpha}_{i}$ are $m + 2$ times continuously differentiable in $x$ for $\alpha \in \{0, \dots, d\}$. Further, there exist constants $K_{j}$, for $j \in \{0, \dots, m +2 \}$, such that 
\begin{align*}
\left|D^{j} a^{\alpha \beta}_{i} \right| &\leq K_{j} && \text{for $j \leq (m + 1) \vee 2$,}\\
\left|D^{j} a^{\alpha 0}_{i} \right| + \left|D^{j} a^{0\alpha}_{i} \right| + \left|D^{j} a^{00}_{i} \right| &\leq K_{j} && \text{for $j \leq m + 1$, and}\\
\left|D^{j} b^{\alpha}_{i} \right| + \left|D^{j} b^{0}_{i} \right| &\leq K_{j} &&\text{for $j \leq m + 2$}
\end{align*}
for all $\alpha, \beta \in \{1, \dots, d\}$. 
\end{asm}

For integer $l \geq 0$, we define the norm 
\begin{equation*}
\left\llbracket \phi \right\rrbracket_{l}^{2} := \mexp \sum_{i=0}^{n} \tau \left\| \phi_{i} \right\|_{l}^{2}
\end{equation*}
and let $\mathbf{W}^{l}_{2}(\tau)$ be the space of $W^{l}_{2}$-valued $\mathcal{F}_{i}$-measurable processes $\phi$ such that $\left\llbracket \phi \right\rrbracket_{l}^{2} < \infty$. We use the shorthand notation 
$$\left\llbracket g \right\rrbracket_{l}^{2} := \sum_{\rho=1}^{d_{1}} \left\llbracket g^{\rho}\right\rrbracket_{l}^{2}$$ 
for functions $g = (g^{\rho})_{\rho=1}^{d_{1}}$.

\begin{asm}\label{asm: on initial conditions and free terms}
The initial condition $v_{0} \in L^{2}(\Omega, \mathcal{F}_{0}, W^{m+2}_{2})$, the space of $\mathcal{F}_{0}$-measurable $W^{m+2}_{2}$-valued square integrable functions on $\Omega$. The free terms $f$ and $g^{\rho}$, for $\rho \in \{1, \dots, d_{1}\}$, take values in $\mathbf{W}^{m+1}_{2}(\tau)$. Moreover, 
\begin{equation}\label{eqn: bound on initial conditions and free terms}
\mathcal{K}_{m}^{2} := \tau \mexp \left\| v_{0} \right\|_{m+2}^{2} + \left\llbracket f \right\rrbracket_{m+1}^{2} + \left\llbracket g \right\rrbracket_{m+1}^{2} < \infty.
\end{equation}
\end{asm}

\begin{rmk}\label{rmk: on Sobolev's embedding}
For $m > d/2$, we can find a  continuous function of $x$ which is equal to $v_{0}$ almost everywhere for almost all $\omega \in \Omega$, by Sobolev's embedding of $W_{2}^{m} \subset \mathcal{C}_{b}$, the space of bounded continuous functions. Similarly, for each $\omega \in \Omega$ and $i \in \{0, \dots, n\}$ there exist continuous functions of $x$ which coincide with $f_{i}$ and $g^{\rho}_{i}$, for $\rho \in \{1, \dots, d_{1}\}$, for almost every $x \in \mathbf{R}^{d}$. Thus, if Assumption \ref{asm: on initial conditions and free terms} holds with $m > d/2$ we assume that $v_{0}$, $f_{i}$, and $g^{\rho}_{i}$, for $\rho \in \{1, \dots, d_{1}\}$, are continuous in $x$ for all $i \in \{0, \dots, n\}$.
\end{rmk}


For the time scheme \eqref{eqn: time scheme} we give the following solvability theorem along with an estimate. The proof is provided after some preliminaries are presented in the next section. 

\begin{thm}\label{thm: solvability of the time scheme with estimate}
If Assumptions \ref{asm: stochastic parabolicity}, \ref{asm: coefficients of the differential operators}, and \ref{asm: on initial conditions and free terms} hold, then \eqref{eqn: time scheme} admits a unique $W^{m}_{2}$-valued $\mathcal{F}_{i}$-measurable solution $v$. Moreover, 
\begin{equation}\label{eqn: estimate for the time scheme}
\mexp \max_{i\leq n} \left\| v_{i} \right\|_{m}^{2} \leq N \mathcal{K}_{m}^{2}
\end{equation}
holds for a constant $N = N(d,d_{1},m,T,K_{0},\dots,K_{m+2})$.
\end{thm}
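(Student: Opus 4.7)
The plan is to establish the estimate \eqref{eqn: estimate for the time scheme} first assuming existence of a solution, and then to derive existence (and uniqueness) by a Galerkin approximation scheme in which this estimate supplies the uniform bound needed to pass to the limit. Uniqueness then follows from the estimate applied to the difference of two solutions with vanishing data.

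For the estimate, the starting point is the elementary identity
\begin{equation*}
\|\phi\|_0^2 - \|\psi\|_0^2 = 2(\phi, \phi - \psi) - \|\phi - \psi\|_0^2,
\end{equation*}
applied with $\phi = D^\gamma v_i$ and $\psi = D^\gamma v_{i-1}$ for each multiindex $|\gamma| \leq m$ and summed. Substituting the scheme (differentiated $D^\gamma$ times, with commutators $[D^\gamma, \mathcal{L}_i]$ and $[D^\gamma, \mathcal{M}^\rho_{i-1}]$ whose operator norms are controlled by Assumption \ref{asm: coefficients of the differential operators}), expanding $\|v_i - v_{i-1}\|_m^2$ using the scheme, and taking expectation so that inner products of the form $(\chi, \xi^\rho_i)$ with $\mathcal{F}_{i-1}$-measurable $\chi$ vanish, one obtains a one-step inequality of the form
\begin{equation*}
\mexp\|v_i\|_m^2 - \mexp\|v_{i-1}\|_m^2 \leq 2\tau\,\mexp(\mathcal{L}_i v_i, v_i)_m + \tau\sum_\rho\mexp\|\mathcal{M}^\rho_{i-1} v_{i-1}\|_m^2 + N\tau\bigl(\mexp\|v_{i-1}\|_m^2 + \mexp\|f_i\|_{m+1}^2 + \sum_\rho\mexp\|g^\rho_{i-1}\|_{m+1}^2\bigr),
\end{equation*}
where $(u, v)_m := \sum_{|\gamma| \leq m}(D^\gamma u, D^\gamma v)$. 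Invoking the degenerate Gårding inequality described below, and resolving the implicit/explicit time-index mismatch between $\mathcal{M}^\rho_{i-1} v_{i-1}$ and $\mathcal{M}^\rho_i v_i$ by a perturbation using that $\|v_i - v_{i-1}\|_m$ is $O(\sqrt{\tau})$ in $L^2(\Omega)$, the discrete Gronwall lemma yields $\max_i \mexp\|v_i\|_m^2 \leq N \mathcal{K}_m^2$. To move the maximum inside the expectation, I would apply the discrete Burkholder-Davis-Gundy (or Doob) inequality to the martingale $\sum_{j\le i}\sum_\rho 2(v_{j-1}, \mathcal{M}^\rho_{j-1} v_{j-1} + g^\rho_{j-1})_m\,\xi^\rho_j$ that arises after summing the one-step identities in $j$ but before taking expectation.

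The main obstacle is the degenerate Gårding inequality
\begin{equation*}
2(\mathcal{L}_i v, v)_m + \sum_\rho \|\mathcal{M}^\rho_i v\|_m^2 \leq N\|v\|_m^2, \qquad v \in W^{m+2}_2,
\end{equation*}
on which the above argument depends. Ordinary parabolicity would supply coercivity of the form $\|v\|_{m+1}^2$ on the left, but Assumption \ref{asm: stochastic parabolicity} only gives $2a - bb^T \geq 0$, so the naive approach of integrating by parts once and applying Young's inequality fails because the resulting $\|v\|_{m+1}^2$ term has nothing to absorb against. The remedy is to integrate the principal part $(a^{\alpha\beta} D_\alpha D_\beta v, v)$ with $\alpha, \beta \geq 1$ by parts \emph{twice}, turning $\int D_\beta(a^{\alpha\beta}) D_\alpha v \cdot v$ into $\tfrac{1}{2}\int D_\alpha D_\beta(a^{\alpha\beta})\,v^2$, which is bounded by $N\|v\|_0^2$ precisely because $a$ is $(m+1)\vee 2$ times continuously differentiable (Assumption \ref{asm: coefficients of the differential operators}). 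Expanding $\sum_\rho \|\mathcal{M}^\rho_i v\|_0^2$ analogously, the leading contributions of $2(\mathcal{L}_i v, v)_0$ and $\sum_\rho\|\mathcal{M}^\rho_i v\|_0^2$ pair as $\int_{\alpha,\beta \geq 1}(bb^T - 2a)_{\alpha\beta} D_\alpha v D_\beta v \leq 0$ by Assumption \ref{asm: stochastic parabolicity}, while cross-terms like $\int b^{\alpha\rho} D_\alpha v \cdot b^{0\rho} v$ are absorbed via Young's inequality using the same nonnegative pairing. The inequality at level $m$ then follows by differentiating the equation $D^\gamma$ times for $|\gamma|\leq m$ and bookkeeping commutator contributions, all bounded via Assumption \ref{asm: coefficients of the differential operators}.
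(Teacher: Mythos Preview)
Your overall architecture---establish the a priori estimate first, then construct a solution by approximation---matches the paper, and your sketch of the degenerate G\aa rding inequality is exactly Lemma~\ref{lem: estimate from Krylov and Rozovskii (1982)} (quoted from Krylov--Rozovski\u{\i}). Two points deserve correction, however.

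\textbf{The time-index mismatch.} Your proposed fix, ``perturb using that $\|v_i-v_{i-1}\|_m$ is $O(\sqrt\tau)$,'' is circular. From the scheme, $v_i-v_{i-1}=(\mathcal{L}_i v_i+f_i)\tau+\sum_\rho(\mathcal{M}^\rho_{i-1}v_{i-1}+g^\rho_{i-1})\xi^\rho_i$, so $\|v_i-v_{i-1}\|_m$ involves $\|v_i\|_{m+2}$ and $\|v_{i-1}\|_{m+1}$, which you do not control at this stage. More to the point, to replace $\|\mathcal{M}^\rho_{i-1}v_{i-1}\|_m^2$ by $\|\mathcal{M}^\rho_i v_i\|_m^2$ at each step you would need $\|\mathcal{M}^\rho_i(v_i-v_{i-1})\|_m\le N\|v_i-v_{i-1}\|_{m+1}$ small---again a derivative too many. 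The paper instead \emph{sums first} and then re-indexes the $\mathcal{M}$-sum: $\sum_{i=1}^{j}\|\mathcal{M}^\rho_{i-1}v_{i-1}\|_m^2=\sum_{i=0}^{j-1}\|\mathcal{M}^\rho_i v_i\|_m^2$, so the mismatch appears only as the boundary term $\tau\|\mathcal{M}^\rho_0 v_0\|_m^2\le N\tau\|v_0\|_{m+1}^2$, a \emph{data} term rather than an unknown. This is why the final constant $\mathcal{K}_m$ involves $\|v_0\|_{m+2}$: one derivative is lost here, and a second when the estimate at level $m+1$ is fed into the Burkholder--Davis--Gundy bounds for the martingale and the cross term $J^{(2)}$ (coming from $\xi^\pi_i\xi^\rho_i=\tau\chi_{\pi\rho}+$ martingale increment via It\^o).

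\textbf{Existence.} A direct Galerkin argument on the degenerate problem is delicate: the projected operators need not retain the structure that makes the a priori estimate go through, and the limit sits only in $W^m_2$, where Lemma~\ref{lem: estimate from Krylov and Rozovskii (1982)} (stated for $\phi\in W^{m+2}_2$) is not directly applicable. The paper sidesteps this by \emph{vanishing viscosity}: replace $\mathcal{L}_i$ by $\mathcal{L}_i+\varepsilon\triangle$, so that each time step is a genuinely coercive elliptic problem with a $W^{m+2}_2$ solution $v^\varepsilon_i$ (obtained, if you like, by Galerkin or Lax--Milgram). The a priori estimate then applies rigorously and uniformly in $\varepsilon$, and one extracts a weak limit as $\varepsilon\to 0$.
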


Now we wish to approximate \eqref{eqn: time scheme} in space by replacing the differential operators with difference operators. Together with \eqref{eqn: time scheme} we consider, for a finite subset $\Lambda \subset \mathbf{R}^{d}$ containing the origin,
\begin{equation}\label{eqn: space-time scheme}
v^{h}_{i}(x) = v^{h}_{i-1}(x) + \left( L^{h}_{i} v^{h}_{i}(x) + f_{i}(x) \right) \tau + \sum_{\rho=1}^{d_{1}} \left( M^{h,\rho}_{i-1} v^{h}_{i-1}(x) + g^{\rho}_{i-1}(x) \right) \xi^{\rho}_{i}
\end{equation}
for $\omega \in \Omega$, $x \in \mathbf{R}^{d}$, and $i \in \{1, \dots, n\}$ with initial conditions $v^{h}_{0} (x) = v_{0}$. For each $i \in \{0, \dots, n\}$, the $L^{h}_{i}$ and $M^{h,\rho}_{i}$ are given by 
\begin{gather*}
L^{h}_{i} \phi := \sum_{\lambda,\mu \in \Lambda} \mathfrak{a}^{\lambda \mu}_{i}(x) \delta^{h}_{\lambda} \delta^{h}_{\mu}\phi + \sum_{\lambda \in \Lambda_{0}} \left( \mathfrak{p}^{\lambda}_{i}(x) \delta_{h,\lambda} \phi - \mathfrak{q}^{\lambda}_{i}(x) \delta_{-h,\lambda} \phi \right) \\ \intertext{and} M^{h,\rho}_{i} \phi := \sum_{\lambda \in \Lambda} \mathfrak{b}^{\lambda\rho}_{i}(x) \delta^{h}_{\lambda} \phi
\end{gather*}
for $\rho \in \{1, \dots, d_{1}\}$. For all $\lambda, \mu \in \Lambda$, we assume the given $\mathfrak{a}^{\lambda\mu}_{i} := \mathfrak{a}^{\lambda\mu}_{i}(x)$, $\mathfrak{p}^{\lambda}_{i} := \mathfrak{p}^{\lambda}_{i}(x)$, and $\mathfrak{q}^{\lambda}_{i} := \mathfrak{q}^{\lambda}_{i}(x)$ are real-valued and the $\mathfrak{b}^{\lambda}_{i} = ( \mathfrak{b}^{\lambda \rho}_{i}(x))_{\rho=1}^{d_{1}}$ are $\mathbf{R}^{d_{1}}$-valued $\mathcal{F}_{i} \otimes \mathcal{B}(\mathbf{R}^{d})$-measurable functions for every $\omega \in \Omega$, $x \in \mathbf{R}^{d}$, and $i \in \{0, \dots, n\}$.

In order for $v^{h}$ to approximate the solution of \eqref{eqn: time scheme} in space we require the following \emph{consistency condition}, ensuring the difference operators converge to the differential operators.

\begin{asm}\label{asm: consistency condition} 
For all $\alpha,\beta \in \{1, \dots, d\}$ and $\rho \in \{1, \dots, d_{1}\}$,
\begin{gather*}
\sum_{\lambda \in \Lambda_{0}} \mathfrak{b}^{\lambda \rho}_{i} \lambda^{\alpha} = b^{\alpha \rho}_{i}, \quad \mathfrak{b}^{0\rho}_{i} = b^{0\rho}_{i}, \quad \sum_{\lambda,\mu \in \Lambda_{0}} \mathfrak{a}^{\lambda \mu}_{i} \lambda^{\alpha} \mu^{\beta} = a^{\alpha \beta}_{i}, \quad \mathfrak{a}^{00}_{i} = a^{00}_{i},
\intertext{and}
\sum_{\lambda \in \Lambda_{0}} \mathfrak{a}^{\lambda 0}_{i} \lambda^{\alpha} + \sum_{\mu \in \Lambda_{0}} \mathfrak{a}^{0 \mu}_{i} \mu^{\alpha} + \sum_{\lambda \in \Lambda_{0}} \mathfrak{p}^{\lambda}_{i}\lambda^{\alpha} - \sum_{\mu \in \Lambda_{0}} \mathfrak{q}^{\mu}_{i} \mu^{\alpha} = a^{\alpha 0}_{i} + a^{0 \alpha}_{i}
\end{gather*}
for $i \in \{0,1, \dots, n\}$.
\end{asm}

We also place the following additional assumptions on the coefficients of the difference operators.

\begin{asm}\label{asm: parabolicty condition for difference operators}
For all $\omega \in \Omega$, $x \in \mathbf{R}^{d}$, and $i \in \{0, \dots, n\}$:
\begin{enumerate}
\item  the functions $\mathfrak{p}^{\lambda}\geq 0$ and $\mathfrak{q}^{\lambda} \geq 0$ for all $\lambda \in \Lambda_{0}$;

\item for integer $d_{2} \geq 1$ and $\lambda \in \Lambda_{0}$ there exist $\mathcal{F}_{i} \otimes \mathcal{B}(\mathbf{R}^{d})$-measurable real valued functions $\sigma^{\lambda 1}, \dots, \sigma^{\lambda d_{2}}$ such that 
\begin{equation}\label{eqn: parabolicity condition for difference operators}
\tilde{\mathfrak{a}}^{\lambda \mu}_{i} := 2 \mathfrak{a}^{\lambda \mu}_{i} - \mathfrak{b}^{\lambda \rho}_{i} \mathfrak{b}^{\mu \rho}_{i} = \sum_{r = 1}^{d_{2}} \sigma^{\lambda r}_{i} \sigma^{\mu r}_{i}
\end{equation}
for all $\lambda, \mu \in \Lambda_{0}$. 
\end{enumerate}
\end{asm}

\begin{asm}\label{asm: coefficients of the difference operators}
Let $l \geq 1$ be an integer. For all $\omega \in \Omega$, $i \in \{0, \dots, n\}$, $\lambda \in \Lambda_{0}$, and $k \in \{1, \dots, d_{2}\}$, the functions $\mathfrak{b}^{\lambda}_{i}$ and $\mathfrak{b}^{0}_{i}$ are $l+2$ times continuously differentiable in $x$; the functions $\sigma^{\lambda k}_{i}$ are $l+1$ times continuously differentiable in $x$; and the functions $\mathfrak{a}^{0\lambda}_{i}$, $\mathfrak{a}^{\lambda 0}_{i}$, $\mathfrak{a}^{00}_{i}$, $\mathfrak{p}^{\lambda}_{i}$, and $\mathfrak{q}^{\lambda}_{i}$ are $l$ times continuously differentiable in $x$. Further, there exist constants $\hat{K}_{j}$, for $j \in  \{0, \dots, l+2\}$, such that
\begin{align*}
\left|D^{j} \mathfrak{b}^{\lambda}_{i}\right| + \left|D^{j} \mathfrak{b}^{0}_{i}\right| &\leq \hat{K}_{j} &&\text{for $j \leq l + 2$,}\\
\left|D^{j} \sigma^{\lambda k}_{i}\right| &\leq \hat{K}_{j} &&\text{for $j \leq l + 1$, and} \\
\left|D^{j}\mathfrak{a}^{\lambda 0}_{i}\right| + \left|D^{j} \mathfrak{a}^{0\lambda}_{i} \right| + \left|D^{j} \mathfrak{a}^{00}_{i} \right| + \left|D^{j} \mathfrak{p}^{\lambda}_{i} \right| + 
\left|D^{j} \mathfrak{q}^{\lambda}_{i} \right| & \leq \hat{K}_{j} &&\text{for $j \leq l$}
\end{align*}
for all $\omega \in \Omega$, $x \in \mathbf{R}^{d}$, $i \in \{0, \dots, n\}$, $\lambda \in \Lambda_{0}$, and $k \in \{1, \dots, d_{2}\}$.
\end{asm}

\begin{rmk}
It is clear that \eqref{eqn: parabolicity condition for difference operators} implies that $$\sum_{\lambda, \mu \in \Lambda_{0}} \tilde{\mathfrak{a}}^{\lambda \mu}_{i} z_{\lambda} z_{\mu} \geq 0$$
for $\omega \in \Omega$, $x \in \mathbf{R}^{d}$, $i \in \{0, \dots, d\}$, and $z_{\lambda} \in \mathbf{R}$ for $\lambda \in \Lambda_{0}$. This observation, together with Assumption \ref{asm: consistency condition}, implies Assumption \ref{asm: stochastic parabolicity}. 
\end{rmk}

Solutions to \eqref{eqn: space-time scheme} are understood as sequences of random fields taking values in $\ell^{2}(\mathbf{G}_{h})$, the space of square summable functions on the grid points $\mathbf{G}_{h}$, satisfying \eqref{eqn: space-time scheme} with an $\ell^{2}(\mathbf{G}_{h})$-valued initial condition. The following is a well known result which we include for the sake of completeness. Note that by Assumption \ref{asm: on initial conditions and free terms} the $v_{0}$, $f$ and $g^{\rho}$, for $\rho \in \{1, \dots, d_{1}\}$, are $\ell^{2}(\mathbf{G}_{h})$-valued processes when restricted to the grid $\mathbf{G}_{h}$. 

\begin{thm}\label{thm: existence of l-2 valued solution to space-time scheme}
If Assumptions \ref{asm: on initial conditions and free terms} and \ref{asm: coefficients of the difference operators} hold, then \eqref{eqn: space-time scheme} admits a unique $\ell^{2}(\mathbf{G}_{h})$-valued solution for sufficiently small $\tau$. 
\end{thm}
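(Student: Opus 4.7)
The plan is to solve the implicit step at each $i$ by inverting $I - \tau L^h_i$ on $\ell^2(\mathbf{G}_h)$ via Neumann series, and to iterate the construction in the discrete time index. Rewriting \eqref{eqn: space-time scheme} in the form
\begin{equation*}
(I - \tau L^h_i) v^h_i = v^h_{i-1} + \tau f_i + \sum_{\rho=1}^{d_1} \bigl( M^{h,\rho}_{i-1} v^h_{i-1} + g^\rho_{i-1}\bigr)\xi^\rho_i =: F_i,
\end{equation*}
it suffices, at each step $i$, to verify that $F_i$ is $\ell^2(\mathbf{G}_h)$-valued and $\mathcal{F}_i$-measurable and that $I - \tau L^h_i$ is boundedly invertible on $\ell^2(\mathbf{G}_h)$ for $\tau$ sufficiently small.

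First I would observe that the shift $T_{h,\lambda}$ is an isometry of $\ell^2(\mathbf{G}_h)$ for every $\lambda \in \Lambda$, since $\mathbf{G}_h$ is invariant under translation by $h\lambda$, so $\delta_{h,\lambda}$ and $\delta^h_\lambda$ are bounded with operator norms at most $2/|h|$. Combining this with the deterministic coefficient bounds of Assumption \ref{asm: coefficients of the difference operators}, the operators $L^h_i$ and $M^{h,\rho}_i$ act boundedly on $\ell^2(\mathbf{G}_h)$ with norms controlled by $C_1/|h|^2$ and $C_2/|h|$, respectively, uniformly in $\omega$ and $i$, for constants $C_1, C_2$ depending only on $|\Lambda|, d_1, \hat K_0$. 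Second, I would check that the restrictions $v_0|_{\mathbf{G}_h}, f_i|_{\mathbf{G}_h}, g^\rho_i|_{\mathbf{G}_h}$ are $\ell^2(\mathbf{G}_h)$-valued: under Assumption \ref{asm: on initial conditions and free terms}, for $m > d/2$ Sobolev embedding provides continuous representatives as in Remark \ref{rmk: on Sobolev's embedding}, and a standard grid-sum-versus-integral comparison yields $\sum_{x \in \mathbf{G}_h} |\phi(x)|^2 \leq C(h,m,d)\,\|\phi\|^2_m$, which gives the square summability and the attendant measurability.

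With these preparations the result follows by induction on $i \in \{0, 1, \dots, n\}$. The base case $v^h_0 = v_0|_{\mathbf{G}_h}$ lies in $\ell^2(\mathbf{G}_h)$ by the paragraph above; assuming $v^h_{i-1}$ is an $\mathcal{F}_{i-1}$-measurable $\ell^2(\mathbf{G}_h)$-valued element, the right-hand side $F_i$ is $\mathcal{F}_i$-measurable and $\ell^2(\mathbf{G}_h)$-valued by the same bounds, and for any $\tau$ with $\tau C_1/|h|^2 < 1$ the Neumann series $\sum_{k \geq 0} (\tau L^h_i)^k$ converges in operator norm on $\ell^2(\mathbf{G}_h)$, producing the unique $v^h_i = (I - \tau L^h_i)^{-1} F_i$ with the required measurability. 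The only point requiring any care is the grid-restriction argument of the preceding paragraph that puts the free terms and the initial condition into $\ell^2(\mathbf{G}_h)$; the Neumann series inversion is then immediate. Note that the smallness of $\tau$ required here may depend on $|h|$, which is consistent with the statement since the theorem concerns solvability for each fixed $h$.
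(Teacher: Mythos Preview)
Your proposal is correct and follows essentially the same route as the paper: the paper's proof simply notes that the result rests on the invertibility of $(I-\tau L^{h}_{i})$ on $\ell^{2}(\mathbf{G}_{h})$ for small $\tau$, after which one rewrites the scheme as a recursion and constructs the solution iteratively, deferring details to \cite{Hall:2012}. Your Neumann-series argument supplies exactly those details, and your explicit invocation of Sobolev embedding (implicitly requiring $m>d/2$) to place the data in $\ell^{2}(\mathbf{G}_{h})$ matches what the paper asserts without proof in the paragraph preceding the theorem.
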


\begin{proof}
The proof of this solvability result relies on the invertibility of $(I - \tau L^{h}_{i})$, for each $i \in \{0, \dots, n\}$, in $\ell^{2}(\mathbf{G}_{h})$ for sufficiently small $\tau$. Rewriting the scheme as a recursion and using this fact one can construct a unique solution to the scheme iteratively. Full details can be found, for example, in \cite{Hall:2012}.
\end{proof}
%

We observe, however, that \eqref{eqn: space-time scheme} is well defined not only at the points of the grid but for the whole space. Therefore, we consider \eqref{eqn: space-time scheme} on $\mathbf{R}^{d}$ and seek solutions that are sequences of $L^{2}$-valued functions. Hence we will use the normal machinery from analysis to obtain estimates in appropriate Sobolev spaces for solutions to the space-time scheme. Then we will obtain continuous versions of these solutions, by Sobolev's embedding, and show that these solutions agree with the ``natural'' solutions at the grid points. 

To aid in achieving this goal one has the following lemma regarding the embedding $W^{l}_{2} \subset \ell^{2}(\mathbf{G}_{h})$, the proof of which can be found, for example, in \cite{GyongyKrylov:2010}. Recall by Sobolev's embedding of $W^{l}_{2}$ into $\mathcal{C}_{b}$, for $l > d/2$ there exists a linear operator $\mathcal{I} : W^{l}_{2} \to \mathcal{C}_{b}$ such that $\mathcal{I}\phi(x) = \phi (x)$ for almost every $x\in \mathbf{R}^{d}$ and 
\begin{equation*}
\sup_{x \in \mathbf{R}^{d}} \left| \mathcal{I} \phi (x) \right| \leq N \left\| \phi \right\|_{l}
\end{equation*}
for all $\phi \in W^{l}_{2}$ where $N = N(d)$.

\begin{lem}\label{lem: embedding}
For all $\phi \in W^{l}_{2}$ if $l > d/2$ and $h \in (0,1)$, then 
\begin{equation*}
\sum_{x \in \mathbf{G}_{h}} \left| \mathcal{I} \phi (x) \right|^{2} h^{d} \leq N \left\| \phi \right\|_{l}^{2}
\end{equation*}
for a constant $N = N(d)$.
\end{lem}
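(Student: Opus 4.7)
The plan is to transfer the pointwise Sobolev embedding $|\mathcal{I}\phi(x)|\leq N\|\phi\|_{l}$ into the desired discrete $\ell^{2}$-type bound over $\mathbf{G}_{h}$ by localizing to cubes of side $h$ centered at grid points and then using that these cubes have bounded overlap uniformly in $h$.

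First I would fix $x \in \mathbf{G}_{h}$, set $Q_{x} := x + [-h/2, h/2)^{d}$, and introduce the rescaled function $\psi_{x}(y) := \mathcal{I}\phi(x+hy)$ for $y \in [-1/2, 1/2)^{d}$. Since $l > d/2$, the standard Sobolev embedding $W^{l}_{2} \subset \mathcal{C}_{b}$ applied on the unit cube gives
\begin{equation*}
|\psi_{x}(0)|^{2} \leq C \sum_{|\rho|\leq l} \int_{[-1/2,1/2)^{d}} |D^{\rho} \psi_{x}(y)|^{2} \, \mathrm{d}y
\end{equation*}
for a constant $C = C(d,l)$. A change of variables $z = x + hy$, together with $h \in (0,1)$ (so $h^{2|\rho|} \leq 1$ for $|\rho|\leq l$), yields the local estimate
\begin{equation*}
h^{d} |\mathcal{I}\phi(x)|^{2} \leq C \sum_{|\rho|\leq l} h^{2|\rho|} \int_{Q_{x}} |D^{\rho} \phi(z)|^{2} \, \mathrm{d}z \leq C \int_{Q_{x}} \sum_{|\rho|\leq l} |D^{\rho}\phi(z)|^{2} \, \mathrm{d}z.
\end{equation*}
Summing over $x \in \mathbf{G}_{h}$ and exchanging sum and integral then gives
\begin{equation*}
\sum_{x \in \mathbf{G}_{h}} h^{d} |\mathcal{I}\phi(x)|^{2} \leq C \int_{\mathbf{R}^{d}} \Bigl( \sum_{x \in \mathbf{G}_{h}} \mathbf{1}_{Q_{x}}(z) \Bigr) \sum_{|\rho|\leq l} |D^{\rho}\phi(z)|^{2} \, \mathrm{d}z,
\end{equation*}
and the lemma follows at once provided $\sup_{z \in \mathbf{R}^{d}} \sum_{x \in \mathbf{G}_{h}} \mathbf{1}_{Q_{x}}(z) \leq N_{2}$ for a constant $N_{2}$ independent of $h$.

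The main obstacle is establishing this last bounded-overlap property. Writing $\mathbf{G}_{h} = h \mathbf{G}_{1}$, the set $\mathbf{G}_{1}$ is the additive semigroup generated by $\Lambda \cup (-\Lambda)$, and I would work under the implicit discreteness assumption on $\Lambda$ (as in \cite{GyongyKrylov:2010}) which ensures that $\mathbf{G}_{1}$ is a discrete subset of $\mathbf{R}^{d}$ with some strictly positive minimum separation $r > 0$ between distinct points. After then replacing each cube $Q_{x}$ by the inscribed ball of radius $(rh)/(2\sqrt{d})$ (pairwise disjoint by construction), a routine volume-packing argument bounds $N_{2}$ by a constant depending only on $d$ and $\Lambda$. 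Putting this together with the display above completes the proof; apart from this lattice-structure point, the argument is just the standard scaling proof of Sobolev embedding on a cube of size $h$.
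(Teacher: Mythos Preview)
The paper does not actually prove this lemma; it states it and refers to \cite{GyongyKrylov:2010} for the proof. Your localize--rescale--sum argument is exactly the standard proof one finds there: apply the Sobolev inequality on a unit cube, rescale to a cube of side $h$ centered at each grid point, and control the sum via the overlap multiplicity of the family $\{Q_{x}\}_{x\in\mathbf{G}_{h}}$. So in substance your approach coincides with the cited one.

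Two minor remarks. First, you are right to single out the discreteness of $\mathbf{G}_{1}$ as the only nontrivial ingredient; the present paper does not state this hypothesis explicitly, but without it the left-hand side can be infinite (take $\Lambda=\{0,1,\sqrt{2}\}$ in $\mathbf{R}$), and in \cite{GyongyKrylov:2010} the corresponding lattice structure is assumed. Second, your constant necessarily depends on $d$, $l$, and $\Lambda$ (through the separation parameter $r$), which is more accurate than the paper's ``$N=N(d)$''; this does not affect any downstream use of the lemma here.
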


With these preliminary considerations in mind, we turn to the main pursuit of this paper. To accelerate the rate of convergence of the spatial approximation to an arbitrarily high order via Richardson's method we must first prove the existence of an asymptotic expansion in powers of the discretization parameter $h$ for the error between the space-time approximation and the temporal discretization. Thus we prove that for an integer $k \geq 0$ there exists random fields $v^{(0)}_{i}(x)$, \dots, $v^{(k)}_{i}(x)$ that are independent of $h$ and satisfy certain properties for all $i \in \{0, \dots, n\}$ and $x \in \mathbf{G}_{h}$. Namely, that $v^{(0)}$ is the solution to \eqref{eqn: time scheme} with initial condition $v_{0}$ and for nonzero $h$, 
\begin{equation}\label{eqn: expansion}
v^{h}_{i} (x) = \sum_{j = 0}^{k} \frac{h^{j}}{j!} v^{(j)}_{i}(x) + R^{\tau,h}_{i}(x)
\end{equation}
holds almost surely for all $x \in \mathbf{G}_{h}$ and all $i \in \{0, \dots, n\}$ where $v^{h}$ is the solution to \eqref{eqn: space-time scheme} with initial condition $v_{0}$ and $R^{h}$ is an $\ell^{2}(\mathbf{G}_{h})$-valued adapted process such that 
\begin{equation}\label{eqn: estimate for error term}
\mexp \max_{i \leq n} \sup_{x \in \mathbf{G}_{h}} \left| R^{\tau,h}_{i}(x) \right|^{2} \leq N h^{2(k+1)} \mathcal{K}^{2}_{m}
\end{equation}
for a constant $N$ independent of $h$ and $\tau$. 

We include the following additional assumption on the coefficients of the difference operators because at certain points in the proofs to come we will require less regularity than is guaranteed by Assumption \ref{asm: coefficients of the difference operators}.

\begin{asm}\label{asm: coefficients of the difference operators, less regularity}
Let $\mathfrak{m} \geq 0$ be a fixed integer. For $\lambda, \mu \in \Lambda$, the spatial derivatives of $\mathfrak{a}^{\lambda \mu}_{i}$ and $\mathfrak{b}^{\lambda}_{i}$ exist up to order $(\mathfrak{m}-4)\vee 0$ and for $\lambda \in \Lambda_{0}$ the spatial derivatives of $\mathfrak{p}^{\lambda}_{i}$ and $\mathfrak{q}^{\lambda}_{i}$ exist up to order $(\mathfrak{m}-2)\vee 0$ and the coefficients together with their derivatives are bounded by constants $C_{\mathfrak{m}}$ for all $\omega \in \Omega$, $x \in \mathbf{R}^{d}$, and $i \in \{0, \dots, n\}$. 
\end{asm}

\begin{thm}\label{thm: expansion}
If Assumption \ref{asm: coefficients of the difference operators} holds with integer $l \geq d/2$ and Assumptions \ref{asm: stochastic parabolicity}, \ref{asm: coefficients of the differential operators}, \ref{asm: on initial conditions and free terms}, \ref{asm: consistency condition}, \ref{asm: parabolicty condition for difference operators}, and  \ref{asm: coefficients of the difference operators, less regularity} hold with
\begin{equation}\label{eqn: condition on m, expansion}
m = \mathfrak{m} \geq 3k + 4 + l
\end{equation}
for integer $k \geq 0$, then expansion \eqref{eqn: expansion} and estimate \eqref{eqn: estimate for error term} hold for $h > 0$ with a constant $N = N(d,d_{1},d_{2},m,l,T,K_{0}, \dots, K_{m+2}, \hat{K}_{0}, \dots, \hat{K}_{l+2}, C_{m},\Lambda)$. If, in addition, $\mathfrak{p}^{\lambda} = \mathfrak{q}^{\lambda} = 0$ for $\lambda \in \Lambda_{0}$, then \eqref{eqn: expansion} and \eqref{eqn: estimate for error term} hold for all nonzero $h$. In this case, the $v^{(j)}$ vanish for odd $j \leq k$ and, hence, if $k$ is odd, then \eqref{eqn: condition on m, expansion} can be replaced with $\mathfrak{m} = m \geq 3k + 1 + l$.
\end{thm}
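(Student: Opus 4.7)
The plan is (i) to derive the $v^{(j)}$ by formal matching of powers of $h$ after Taylor-expanding the difference operators, (ii) to solve the resulting triangular system of implicit Euler time schemes using Theorem \ref{thm: solvability of the time scheme with estimate}, and (iii) to bound the remainder $R^{\tau,h}$ in a high-order Sobolev norm via an energy estimate for the space-time scheme, passing to the supremum over $\mathbf{G}_h$ through Lemma \ref{lem: embedding}.

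First, Taylor's formula gives, for smooth $\phi$ and $\lambda \in \Lambda_0$,
\begin{equation*}
\delta^h_\lambda \phi = \sum_{j \geq 0} \frac{h^{2j}}{(2j+1)!}(\lambda\cdot\nabla)^{2j+1}\phi, \qquad \delta_{\pm h,\lambda}\phi = \sum_{j \geq 0} \frac{(\pm h)^j}{(j+1)!}(\lambda\cdot\nabla)^{j+1}\phi,
\end{equation*}
so that $L^h_i = \sum_{j \geq 0} h^j \mathcal{L}^{(j)}_i$ and $M^{h,\rho}_i = \sum_{j \geq 0} h^j \mathcal{M}^{(j),\rho}_i$, with $\mathcal{L}^{(j)}_i, \mathcal{M}^{(j),\rho}_i$ differential operators of orders at most $j+2$ and $j+1$ respectively; Assumption \ref{asm: consistency condition} ensures $\mathcal{L}^{(0)}_i = \mathcal{L}_i$ and $\mathcal{M}^{(0),\rho}_i = \mathcal{M}^\rho_i$. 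Substituting the ansatz $v^h_i = \sum_{j=0}^k (h^j/j!)\,v^{(j)}_i + R^{\tau,h}_i$ into \eqref{eqn: space-time scheme} and equating coefficients of $h^j$ for $0 \leq j \leq k$ shows that $v^{(0)} = v$ and that, for $1 \leq j \leq k$, $v^{(j)}$ satisfies the implicit Euler scheme of form \eqref{eqn: time scheme} with zero initial datum and free terms
\begin{equation*}
F^{(j)}_i := \sum_{r=1}^j \binom{j}{r}\,\mathcal{L}^{(r)}_i v^{(j-r)}_i, \qquad G^{(j),\rho}_{i-1} := \sum_{r=1}^j \binom{j}{r}\,\mathcal{M}^{(r),\rho}_{i-1} v^{(j-r)}_{i-1}.
\end{equation*}
This triangular system is solved inductively by Theorem \ref{thm: solvability of the time scheme with estimate}; the count \eqref{eqn: condition on m, expansion} absorbs the derivative loss at each stage (roughly three derivatives per level of the expansion) and leaves $v^{(k)}$ with the Sobolev smoothness needed for the final embedding step. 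The residual free terms driving $R^{\tau,h}$ are then $O(h^{k+1})$ in a Sobolev norm of the $v^{(j)}$ by Taylor's remainder theorem, and applying the supremum-in-$i$ $W^l_2$-estimate for the space-time scheme established in Section \ref{>>sec: Auxilliary Results} produces $\mexp \max_{i \leq n} \|R^{\tau,h}_i\|_l^2 \leq N h^{2(k+1)} \mathcal{K}_m^2$. Lemma \ref{lem: embedding}, valid since $l > d/2$, then delivers \eqref{eqn: estimate for error term}.

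When $\mathfrak{p}^\lambda = \mathfrak{q}^\lambda = 0$ only symmetric differences $\delta^h_\lambda$ appear in $L^h$ and $M^{h,\rho}$, whose Taylor series contain only even powers of $h$; hence $\mathcal{L}^{(j)} = \mathcal{M}^{(j),\rho} = 0$ for odd $j$, the forcing terms $F^{(j)}, G^{(j),\rho}$ vanish inductively for odd $j$, and therefore $v^{(j)} = 0$ for all odd $j \leq k$. Only the even indices contribute, which halves the derivative loss and justifies the reduced smoothness demand $\mathfrak{m} = m \geq 3k + 1 + l$ when $k$ is odd.

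\textbf{Principal obstacle.} Everything above is standard bookkeeping except for the supremum-in-$i$ $W^l_2$-estimate for the fully discrete implicit scheme \eqref{eqn: space-time scheme} under the merely degenerate parabolicity of Assumptions \ref{asm: stochastic parabolicity} and \ref{asm: parabolicty condition for difference operators}. Under the strong parabolicity of \cite{Hall:2012} this follows from a coercive energy inequality together with a discrete Burkholder--Davis--Gundy bound; in the degenerate setting there is no coercivity to exploit. Instead one must use the factorization $\tilde{\mathfrak{a}}^{\lambda\mu}_i = \sum_r \sigma^{\lambda r}_i \sigma^{\mu r}_i$ provided by Assumption \ref{asm: parabolicty condition for difference operators} to rewrite, after summation by parts, the critical cross term as a sum of perfect squares $\sum_r(\sigma^{\lambda r}_i \delta^h_\lambda v^h_i, \sigma^{\mu r}_i \delta^h_\mu v^h_i)$ plus lower-order commutators that can be absorbed. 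The continuous-time tools of \cite{Gyongy:2011}, which rest on It\^o's formula, are not available in discrete time, so this estimate has to be built from scratch as an auxiliary result; this is flagged by the author as the chief technical novelty of the paper.
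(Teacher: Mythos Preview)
Your proposal is correct and follows essentially the same route as the paper: define the $v^{(j)}$ via the triangular system of implicit Euler schemes, bound them inductively with Theorem~\ref{thm: solvability of the time scheme with estimate}, show the remainder satisfies the space-time scheme with $O(h^{k+1})$ free terms, and apply the $W^l_2$-estimate for \eqref{eqn: space-time scheme} (Theorem~\ref{thm: estimate for space-time scheme}, whose proof rests on the $Q^\gamma$-inequality you describe in your ``Principal obstacle'' paragraph). Two small corrections: the passage from the $W^l_2$-bound on the remainder to the pointwise estimate \eqref{eqn: estimate for error term} uses the Sobolev embedding $W^l_2 \hookrightarrow \mathcal{C}_b$, not Lemma~\ref{lem: embedding} (which only gives an $\ell^2(\mathbf{G}_h)$-bound); and you must still check that the $W^l_2$-valued solution of \eqref{eqn: space-time scheme} on $\mathbf{R}^d$, when restricted to $\mathbf{G}_h$, coincides with the natural $\ell^2(\mathbf{G}_h)$-valued solution $v^h$ appearing in \eqref{eqn: expansion}---the paper handles this by mollification and the uniqueness in Theorem~\ref{thm: existence of l-2 valued solution to space-time scheme} at the end of Section~\ref{sec: Proof of Main Results}.
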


This theorem follows from the next result, which will also allow us to provide higher order estimates for derivatives of the solutions. Taking differences of \eqref{eqn: expansion} yields 
\begin{equation*}
\delta_{h,\lambda} v^{h}_{i}(x) + \sum_{j=0}^{k} \frac{h^{j}}{j!} \delta_{h,\lambda} v^{(j)}_{i} (x) + \delta_{h,\lambda} R^{\tau, h}_{i}(x)
\end{equation*}
for any $\lambda := (\lambda_{1},\dots,\lambda_{p}) \in \Lambda^{p}$, for integer $p \geq 0$, where $\Lambda^{0} := \{0\}$ and $\delta_{h,\lambda} := \delta_{h,\lambda_{1}}\times \cdots \times \delta_{h,\lambda_{p}}$. Although the estimate for $\delta_{h,\lambda} R^{\tau, h}_{i}(x)$ is not obvious, we have the following generalization of Theorem \ref{thm: expansion}.

\begin{thm}\label{thm: generalized expansion}
Let the assumptions of Theorem \ref{thm: expansion} hold with 
\begin{equation}\label{eqn: condition on m, generalized expansion}
\mathfrak{m} = m \geq p + 3k + 4 + l
\end{equation} 
for integers $l > d/2$, $p \geq 0$, and $k \geq 0$ with $\lambda \in \Lambda^{p}$. Then for $h > 0$ expansion \eqref{eqn: expansion} and 
\begin{equation*}
\mexp \max_{i \leq n} \sup_{x \in \mathbf{G}_{h}} \left| \delta_{h,\lambda} R^{\tau,h}_{i}(x) \right|^{2} \leq N h^{2(k+1)} \mathcal{K}^{2}_{m},
\end{equation*}
hold for a constant $N = N(p,d,d_{1},d_{2},m,l,T,K_{0}, \dots, K_{m+2}, \hat{K}_{0}, \dots, \hat{K}_{l+2}, C_{m},\Lambda)$. If, in addition, $\mathfrak{p}^{\lambda} = \mathfrak{q}^{\lambda} = 0$ for $\lambda \in \Lambda_{0}$, then the terms $v^{(j)}$ vanish for odd $j \leq k$ and, therefore, if $k$ is odd, then \eqref{eqn: condition on m, generalized expansion} can be replaced with $m \geq p + 3k + 1 + l$. 
\end{thm}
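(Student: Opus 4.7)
The plan is to construct the coefficient fields $v^{(0)},\ldots,v^{(k)}$ by Taylor expanding the finite difference operators in powers of $h$ and matching like powers in the scheme \eqref{eqn: space-time scheme}. Since $T_{h,\lambda}\phi(x) = \sum_{s\ge 0}\frac{h^{s}}{s!}(\lambda\cdot\nabla)^{s}\phi(x)$, one has formal expansions
\begin{equation*}
L^{h}_{i} = \sum_{s\ge 0} h^{s}\, L^{(s)}_{i}, \qquad M^{h,\rho}_{i} = \sum_{s\ge 0} h^{2s}\, M^{(2s),\rho}_{i},
\end{equation*}
where Assumption \ref{asm: consistency condition} forces $L^{(0)}_{i} = \mathcal{L}_{i}$ and $M^{(0),\rho}_{i} = \mathcal{M}^{\rho}_{i}$, and where $M^{h,\rho}$ contains only even powers of $h$ because it is built from the symmetric differences $\delta^{h}_{\lambda}$. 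Under the additional hypothesis $\mathfrak{p}^{\lambda} = \mathfrak{q}^{\lambda} = 0$ the same parity holds for $L^{h}$, which accounts for the vanishing of odd-indexed $v^{(j)}$ in that case. Inserting the ansatz $v^{h}_{i} = \sum_{j=0}^{k} \frac{h^{j}}{j!} v^{(j)}_{i} + R^{\tau,h}_{i}$ into \eqref{eqn: space-time scheme} and equating coefficients of $h^{j}$ for $j = 0, \ldots, k$ produces a sequence of implicit time schemes of the form \eqref{eqn: time scheme}. For $j = 0$ this is exactly \eqref{eqn: time scheme} with initial datum $v_{0}$ and free terms $f, g$, so $v^{(0)}$ is the time-discretization solution; for $j \ge 1$, $v^{(j)}$ satisfies a scheme driven by the same operators $\mathcal{L}_{i}, \mathcal{M}^{\rho}_{i}$ but with zero initial data and forcing terms that are linear in higher spatial derivatives of $v^{(0)}, \ldots, v^{(j-1)}$.

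I would then apply Theorem \ref{thm: solvability of the time scheme with estimate} inductively to establish existence and Sobolev estimates for each $v^{(j)}$. Since the forcing for $v^{(j)}$ involves higher-order derivatives of $v^{(0)}, \ldots, v^{(j-1)}$, each induction step loses a fixed number of spatial derivatives; the counting of this loss, combined with the Sobolev embedding used to pass from $W_{2}^{l}$ to $\mathcal{C}_{b}$ and the further $p$ derivatives needed to take $p$-fold differences, is what the regularity condition \eqref{eqn: condition on m, generalized expansion} encodes.

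Next, I would derive the discrete equation satisfied by $R^{\tau,h} := v^{h} - \sum_{j=0}^{k} \frac{h^{j}}{j!} v^{(j)}$: it has the form of \eqref{eqn: space-time scheme} driven by the truncation error of the Taylor expansion, which is of order $h^{k+1}$ in a suitable Sobolev norm. Applying $\delta_{h,\lambda}$ for $\lambda \in \Lambda^{p}$ to this equation gives a discrete scheme for $\delta_{h,\lambda} R^{\tau,h}$ whose right-hand side consists of the differentiated residual plus commutator terms between $\delta_{h,\lambda}$ and the variable coefficients of $L^{h}_{i}, M^{h,\rho}_{i}$, which are controlled thanks to Assumption \ref{asm: coefficients of the difference operators}. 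The discrete-time energy estimates from Section \ref{>>sec: Auxilliary Results}, valid under the degenerate parabolicity Assumptions \ref{asm: stochastic parabolicity} and \ref{asm: parabolicty condition for difference operators}, should then yield
\begin{equation*}
\mexp \max_{i \le n} \| \delta_{h,\lambda} R^{\tau,h}_{i} \|_{l}^{2} \le N\, h^{2(k+1)} \mathcal{K}_{m}^{2},
\end{equation*}
and Lemma \ref{lem: embedding} converts this into the stated grid-supremum bound.

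The main obstacle will be obtaining the $\mexp \max_{i \le n}\|\cdot\|^{2}$-type maximal estimate under only degenerate stochastic parabolicity. At the continuous-time SPDE level such bounds rely on It\^o's formula together with the Burkholder--Davis--Gundy inequality and the classical cancellation between $2a^{\alpha\beta}$ and $b^{\alpha\rho} b^{\beta\rho}$; in the implicit discrete-time setting neither tool applies directly. One must instead employ discrete martingale maximal inequalities and summation-by-parts, exploiting the decomposition \eqref{eqn: parabolicity condition for difference operators} $\tilde{\mathfrak{a}} = \sigma\sigma^{T}$ to absorb the first-order terms produced by the martingale part. The mismatch between the implicit evaluation of $L^{h}_{i}$ at time $i$ and the explicit evaluation of $M^{h,\rho}_{i-1}$ at time $i-1$ introduces further cross terms that must be handled carefully in order to keep all constants uniform in $h$ and $\tau$ throughout the double induction in $j$ and $p$.
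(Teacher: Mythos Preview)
Your overall architecture---Taylor-expand $L^{h}_{i}$ and $M^{h,\rho}_{i}$ in $h$, define $v^{(j)}$ recursively via implicit time schemes with forcing built from higher derivatives of $v^{(0)},\dots,v^{(j-1)}$, apply Theorem~\ref{thm: solvability of the time scheme with estimate} inductively, and show that the remainder $\mathfrak{r}^{\tau,h}$ satisfies the space-time scheme~\eqref{eqn: space-time scheme} with free terms of size $h^{k+1}$---is exactly the paper's plan (Theorem~\ref{thm: solvability of system with estimate}, Lemma~\ref{lem: the error satisfies the equation for the space-time scheme}, Theorem~\ref{thm: estimate for the error}).

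The one place you diverge is in handling the $p$-fold difference $\delta_{h,\lambda}$. You propose to apply $\delta_{h,\lambda}$ to the equation for $R^{\tau,h}$, obtaining a new scheme for $\delta_{h,\lambda}R^{\tau,h}$ with commutator terms, and then run the energy estimate on that scheme. The paper avoids this entirely: it simply proves the Sobolev estimate $\mexp\max_{i\le n}\|\mathfrak{r}^{\tau,h}_{i}\|_{l'}^{2}\le Nh^{2(k+1)}\mathcal{K}_{m}^{2}$ for $l'=l+p$ by applying Theorem~\ref{thm: estimate for space-time scheme} directly to the remainder equation, and then invokes Remark~\ref{rmk: differences are bounded by derivatives} ($\|\delta_{h,\lambda}\phi\|_{0}\le\|\partial_{\lambda}\phi\|_{0}$, hence $\|\phi\|_{l'-p,p,h}\le N\|\phi\|_{l'}$) together with Sobolev embedding $W^{l'-p}_{2}\subset\mathcal{C}_{b}$ to pass to the pointwise bound (Corollary~\ref{cor: estimate for difference of expansion error}). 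The extra $p$ in~\eqref{eqn: condition on m, generalized expansion} is exactly the $p$ extra Sobolev derivatives spent here. Your commutator route should also work, but it forces you to check that the shifted coefficients still satisfy the structural Assumption~\ref{asm: parabolicty condition for difference operators}, which is unnecessary labor.

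Finally, the obstacle you flag in your last paragraph---getting the maximal-in-$i$ estimate under degeneracy in discrete time---is not new at the level of this theorem. It has already been resolved once and for all in the proofs of Theorems~\ref{thm: solvability of the time scheme with estimate} and~\ref{thm: estimate for space-time scheme}: the discrete noise sums are rewritten as continuous-time stochastic integrals with piecewise-constant integrands, and the ordinary Burkholder--Davis--Gundy inequality is applied. You should invoke those theorems as black boxes rather than reprove the maximal inequality.
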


This theorem and Theorem \ref{thm: expansion} follow from a more general result that is proven in Section \ref{sec: Proof of Main Results} after some preliminaries are presented in Section \ref{>>sec: Auxilliary Results}. Presently we formulate our acceleration result, which says the rate of convergence of the spatial approximation can be accelerated to an arbitrarily high order by taking suitable weighted averages of the approximation at different mesh sizes.

Fix an integer $k \geq 0$ and let 
\begin{gather}\label{eqn: v-bar and v-tilde} 
\bar{v}^{h} := \sum_{j=0}^{k} \bar{\beta}_{j} v^{2^{-j} h} \quad \text{and} \quad 
\tilde{v}^{h} := \sum_{j=0}^{\tilde{k}} \tilde{\beta}_{j} v^{2^{-j}h}
\end{gather}
where $v^{2^{-j}h}$ solves, with $2^{-j}h$ in place of $h$, the space-time scheme \eqref{eqn: space-time scheme} with initial condition $v_{0}$. Here $\bar{\beta}$ is given by $(\bar{\beta}_{0}, \bar{\beta}_{1}, \dots, \bar{\beta}_{k}) := (1, 0, \dots, 0) \bar{V}^{-1} $
where $\bar{V}^{-1}$ is the inverse of the Vandermonde matrix with entries $\bar{V}^{ij} = 2^{-(i-1)(j-1)}$ for $i,j \in \{1, \dots, k+1\}$. Similarly, $\tilde{\beta}$ is given by $(\tilde{\beta}_{0}, \tilde{\beta}_{1}, \dots, \tilde{\beta}_{k}) := (1, 0, \dots, 0) \tilde{V}^{-1}$ where $\tilde{V}^{-1}$ s the inverse of the Vandermonde matrix with entries $\tilde{V}^{ij} = 4^{-(i-1)(j-1)}$ for $i,j \in \{1, \dots, \tilde{k} +1\}$ where $\tilde{k} := \left\lfloor \frac{k}{2} \right\rfloor$. Here $\left\lfloor c \right\rfloor$ denotes the integer part of $c$. Recall that $v^{(0)}$ is the solution to \eqref{eqn: time scheme} with initial condition $v_{0}$.

\begin{thm}\label{thm: acceleration}
Let the assumptions of Theorem \ref{thm: expansion} hold with
\begin{equation}\label{eqn: condition on m, acceleration}
\mathfrak{m} = m \geq 3k + 4 + l 
\end{equation}
for integers $l > d/2$ and $k \geq 0$. Then 
\begin{equation}\label{eqn: acceleration, h>0}
\mexp \max_{i \leq n} \sup_{x \in \mathbf{G}_{h}} \left| \bar{v}^{h}_{i}(x) - v^{(0)}_{i}(x) \right|^{2} \leq N h^{2(k+1)} \mathcal{K}^{2}_{m}
\end{equation}
holds for $h > 0$ with $N = N(d,d_{1},d_{2},m,l,T,K_{0}, \dots, K_{m+2}, \hat{K}_{0}, \dots, \hat{K}_{l+2}, C_{m},\Lambda)$. If, in addition, $\mathfrak{p}^{\lambda} = \mathfrak{q}^{\lambda} = 0$ for $\lambda \in \Lambda_{0}$, then 
\begin{equation}\label{eqn: acceleration, h neq 0}
\mexp \max_{i\leq n} \sup_{x \in \mathbf{G}_{h}} \left| \tilde{v}^{h}_{i}(x) - v^{(0)}_{i}(x) \right|^{2} \leq N |h|^{2(k+1)} \mathcal{K}^{2}_{m}
\end{equation}
holds for nonzero $h$. Moreover, if $k$ is odd, then we only require $\mathfrak{m} = m \geq 3k + 1 + l$ in place of \eqref{eqn: condition on m, acceleration}.
\end{thm}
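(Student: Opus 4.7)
The plan is to substitute the asymptotic expansion from Theorem \ref{thm: expansion} into the definitions \eqref{eqn: v-bar and v-tilde} of $\bar{v}^{h}$ and $\tilde{v}^{h}$, exploit the Vandermonde structure of the weights $\bar{\beta}$ and $\tilde{\beta}$ to annihilate every term in the expansion other than $v^{(0)}$, and then control what is left by applying the remainder estimate \eqref{eqn: estimate for error term} term by term.

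For \eqref{eqn: acceleration, h>0}, fix $h > 0$ and apply Theorem \ref{thm: expansion} with $2^{-j}h$ in place of $h$ for each $j \in \{0, \dots, k\}$; the hypothesis \eqref{eqn: condition on m, acceleration} is precisely the one required. Summing the resulting expansions against the weights $\bar{\beta}_{j}$ and swapping the order of summation yields
\begin{equation*}
\bar{v}^{h}_{i}(x) = \sum_{s=0}^{k} \frac{h^{s}}{s!} v^{(s)}_{i}(x) \Bigl( \sum_{j=0}^{k} \bar{\beta}_{j} 2^{-js} \Bigr) + \sum_{j=0}^{k} \bar{\beta}_{j} R^{\tau, 2^{-j}h}_{i}(x).
\end{equation*}
The defining relation $(\bar{\beta}_{0}, \dots, \bar{\beta}_{k}) = (1, 0, \dots, 0) \bar{V}^{-1}$ combined with the symmetry of $\bar{V}$ forces $\sum_{j=0}^{k} \bar{\beta}_{j} 2^{-js} = \delta_{s,0}$ for every $s \in \{0, \dots, k\}$, so the bracketed double sum collapses to $v^{(0)}_{i}(x)$. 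Hence $\bar{v}^{h}_{i}(x) - v^{(0)}_{i}(x)$ equals the weighted sum of the remainders $R^{\tau, 2^{-j}h}$, at least on the grid $\mathbf{G}_{h}$. Since any $x \in \mathbf{G}_{h}$ can be written as $\sum_{r} \lambda_{r} h = \sum_{r} (2^{j}\lambda_{r})(2^{-j}h)$ with $2^{j}\lambda_{r}$ an allowable sum of elements of $\Lambda \cup (-\Lambda)$, we have $\mathbf{G}_{h} \subset \mathbf{G}_{2^{-j}h}$, so the estimate \eqref{eqn: estimate for error term} transfers: an application of Cauchy--Schwarz and \eqref{eqn: estimate for error term} to each term gives
\begin{equation*}
\mexp \max_{i \leq n} \sup_{x \in \mathbf{G}_{h}} \bigl| \bar{v}^{h}_{i}(x) - v^{(0)}_{i}(x) \bigr|^{2} \leq (k+1) \sum_{j=0}^{k} \bar{\beta}_{j}^{2} \, N (2^{-j}h)^{2(k+1)} \mathcal{K}^{2}_{m} \leq N h^{2(k+1)} \mathcal{K}^{2}_{m},
\end{equation*}
which is \eqref{eqn: acceleration, h>0}; the constant $\sum_{j} \bar{\beta}_{j}^{2}$ is absorbed into $N$ since $\bar{\beta}$ depends only on $k$.

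For \eqref{eqn: acceleration, h neq 0}, the same argument runs with $h$ replaced by any nonzero $h$, which is permitted by Theorem \ref{thm: expansion} once $\mathfrak{p}^{\lambda} = \mathfrak{q}^{\lambda} = 0$. In that setting the $v^{(j)}$ with odd $j$ vanish, so only even indices $s = 2t$, $t \in \{0, \dots, \tilde{k}\}$, contribute to the expansion modulo the $O(|h|^{k+1})$ remainder. The defining condition for $\tilde{\beta}$, namely $\sum_{j=0}^{\tilde{k}} \tilde{\beta}_{j} 4^{-jt} = \delta_{t,0}$ for $t \in \{0, \dots, \tilde{k}\}$, then annihilates each surviving $v^{(2t)}$ with $1 \leq t \leq \tilde{k}$ and fixes the coefficient of $v^{(0)}$ at one. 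The very same supremum transfer and Cauchy--Schwarz step as above yields \eqref{eqn: acceleration, h neq 0}; the relaxation to $\mathfrak{m} = m \geq 3k + 1 + l$ when $k$ is odd is inherited directly from the corresponding relaxation in Theorem \ref{thm: expansion}.

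There is no genuine obstacle: all heavy lifting has been done in Theorem \ref{thm: expansion}. The only delicate point is the grid inclusion $\mathbf{G}_{h} \subset \mathbf{G}_{2^{-j}h}$ needed to transfer the sup-norm remainder estimate to the coarser grid on which we measure the final error; this is purely combinatorial and follows from the description of $\mathbf{G}_{h}$ as finite sums of elements of $\Lambda \cup (-\Lambda)$ times the mesh size.
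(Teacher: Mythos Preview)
Your argument is correct and follows essentially the same route as the paper's proof: insert the expansion from Theorem \ref{thm: expansion} at mesh sizes $2^{-j}h$, use the Vandermonde relations on $\bar{\beta}$ (respectively $\tilde{\beta}$) to kill the intermediate coefficients, and then bound the weighted sum of remainders via \eqref{eqn: estimate for error term}. Your treatment is in fact slightly more explicit than the paper's --- you spell out the grid inclusion $\mathbf{G}_{h} \subset \mathbf{G}_{2^{-j}h}$ and the Cauchy--Schwarz step for the remainder sum, both of which the paper leaves implicit --- and you handle the $\bar{v}^{h}$ case in detail whereas the paper writes out $\tilde{v}^{h}$ and declares the other case ``almost identical''; neither difference is substantive.
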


\begin{proof}
By Theorem \ref{thm: expansion}, 
we have the expansion 
$$v^{2^{-j}h} = v^{(0)} + \sum_{i=1}^{\tilde{k}} \frac{h^{2i}}{2i! 4^{ij}} v^{(2i)} + r^{2^{-j}h} h^{\tilde{k}+1}$$
for each $j \in \{0,1,\dots, k\}$ where $r^{2^{-j}h} := h^{-(\tilde{k}+1)} R^{2^{-j}h}$. Then for $\tilde{r}^{h} := \sum_{j=0}^{\tilde{k}} r^{2^{-j} h}$,
\begin{align*}
\tilde{v}^{h}
	&= \left( \sum_{j=0}^{\tilde{k}} \tilde{\beta}_{j} \right) v^{(0)} + \sum_{j=0}^{\tilde{k}} \sum_{i=1}^{\tilde{k}} \tilde{\beta}_{j} \frac{h^{2i}}{2i!4^{ij}} v^{(2i)} + \tilde{r}^{h} h^{k+1}\\
	&= v^{(0)} + \sum_{i=1}^{\tilde{k}} \frac{h^{2i}}{2i!} v^{(2i)} \sum_{j=0}^{\tilde{k}} \frac{\tilde{\beta}_{j}}{4^{ij}} + \tilde{r}^{h} h^{k+1}\\
	&= v^{(0)} + \tilde{r}^{h}h^{k+1},
\end{align*}
since $\sum_{j=0}^{\tilde{k}} \tilde{\beta}_{j} = 1$ and $\sum_{j=0}^{\tilde{k}} \tilde{\beta}_{j} 4^{-ij} = 0$ for each $i \in \{ 1,2, \dots, k\}$ by the definition of $\tilde{\beta}$. Now using the bound on $R^{\tau, h}$ from Theorem \ref{thm: expansion} together with this last calculation yields \eqref{eqn: acceleration, h neq 0}. The result for \eqref{eqn: acceleration, h>0} is obtained in an almost identical way and therefore we omit the proof.
\end{proof}

\begin{rmk}
Note that without the acceleration, that is, when $k = 0$ and $k =1$ in \eqref{eqn: acceleration, h>0} and \eqref{eqn: acceleration, h neq 0}, respectively, we have that 
\begin{equation*}
\mexp \max_{i \leq n} \sup_{x \in \mathbf{G}_{h}} \left| v^{h}_{i}(x) - v_{i}(x) \right|^{2} \leq Nh^{2} \mathcal{K}^{2}_{m}
\end{equation*}	
and if $\mathfrak{p}^{\lambda} = \mathfrak{q}^{\lambda} = 0$ for $\lambda \in \Lambda_{0}$, then we have 
\begin{equation*}
\mexp \max_{i \leq n} \sup_{x \in \mathbf{G}_{h}} \left| v^{h}_{i}(x) - v_{i}(x) \right|^{2} \leq N h^{4} \mathcal{K}^{2}_{m}
\end{equation*}
in the theorem above. Moreover, these estimates are sharp; see Remark 2.21 in \cite{DongKrylov:2005} on finite difference approximations for deterministic parabolic partial differential equations. 
\end{rmk}

One can also construct rapidly converging approximations for the derivatives of $v^{(0)}$ by taking suitable weighted averaged of finite differences of $\tilde{v}^{h}$.

\begin{thm}\label{thm: acceleration for derivatives}
Let $p \geq 0$ be an integer and let $\mathfrak{p}^{\lambda} = \mathfrak{q}^{\lambda} = 0$ for $\lambda \in \Lambda_{0}$. If the assumptions of Theorem \ref{thm: expansion} hold with $$\mathfrak{m} = m \geq p + 3k + 4 + l,$$ for integers $l > d/2$, $k \geq 0$, and $p \geq 0$, then for $\lambda \in \Lambda^{p}$ equation \eqref{eqn: acceleration, h neq 0} holds with $\delta_{h,\lambda}\tilde{v}^{h}$ and $\delta_{h,\lambda} v^{(0)}$ in place of $\tilde{v}^{h}$ and $v^{(0)}$ respectively. 
\end{thm}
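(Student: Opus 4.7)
The plan is to transcribe the proof of Theorem \ref{thm: acceleration}, applying the difference operator $\delta_{h,\lambda}$ throughout and using Theorem \ref{thm: generalized expansion} in place of Theorem \ref{thm: expansion} to control the remainder. The regularity hypothesis $m \geq p + 3k + 4 + l$ is precisely the one required by Theorem \ref{thm: generalized expansion} for $\lambda \in \Lambda^{p}$. First, I apply that result with $2^{-j}h$ in place of $h$ for each $j \in \{0, 1, \ldots, \tilde{k}\}$. Since $\mathfrak{p}^{\lambda} = \mathfrak{q}^{\lambda} = 0$, only even-indexed $v^{(2i')}$ appear in the expansion, yielding
$$v^{2^{-j}h}_{i}(x) = v^{(0)}_{i}(x) + \sum_{i'=1}^{\tilde{k}} \frac{h^{2i'}}{(2i')!\, 4^{ji'}}\, v^{(2i')}_{i}(x) + R^{\tau, 2^{-j}h}_{i}(x)$$
on $\mathbf{G}_{2^{-j}h}$, together with the remainder estimates
$$\mexp \max_{i \leq n} \sup_{y \in \mathbf{G}_{2^{-j}h}} \left| \delta_{2^{-j}h, \lambda} R^{\tau, 2^{-j}h}_{i}(y) \right|^{2} \leq N (2^{-j}h)^{2(k+1)} \mathcal{K}_{m}^{2}.$$

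The key technical step is to bridge the fine-grid operator $\delta_{2^{-j}h,\lambda}$ controlled above with the coarse-grid operator $\delta_{h,\lambda}$ appearing in the statement to be proved. Telescoping via $T_{h,\mu} - I = \sum_{l=0}^{2^{j}-1}(T_{2^{-j}h,\mu}^{\,l+1} - T_{2^{-j}h,\mu}^{\,l})$ gives, for a single direction,
$$\delta_{h,\mu}\phi(x) = 2^{-j}\sum_{l=0}^{2^{j}-1} \delta_{2^{-j}h,\mu}\phi(x + l\cdot 2^{-j}h\mu);$$
iterating over the $p$ factors of $\lambda = (\lambda_{1}, \ldots, \lambda_{p})$ and using that shifts commute with differences expresses $\delta_{h,\lambda}\phi(x)$ as a convex combination of values of $\delta_{2^{-j}h,\lambda}\phi$ at points in $\mathbf{G}_{2^{-j}h}$ (since $\mathbf{G}_{h} \subset \mathbf{G}_{2^{-j}h}$ and shifts by multiples of $2^{-j}h\lambda_{q}$ preserve this fine grid). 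Therefore
$$\sup_{x \in \mathbf{G}_{h}} \left| \delta_{h,\lambda} R^{\tau, 2^{-j}h}_{i}(x) \right| \leq \sup_{y \in \mathbf{G}_{2^{-j}h}} \left| \delta_{2^{-j}h,\lambda} R^{\tau, 2^{-j}h}_{i}(y) \right|,$$
which combined with the previous estimate gives the uniform bound $\mexp \max_{i} \sup_{x \in \mathbf{G}_{h}} |\delta_{h,\lambda}R^{\tau, 2^{-j}h}_{i}(x)|^{2} \leq N h^{2(k+1)}\mathcal{K}_{m}^{2}$.

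To conclude, apply $\delta_{h,\lambda}$ to the expansion of each $v^{2^{-j}h}$ at $x \in \mathbf{G}_{h}$ and form the weighted sum against $\tilde{\beta}_{j}$. The defining relations $\sum_{j}\tilde{\beta}_{j} = 1$ and $\sum_{j}\tilde{\beta}_{j}/4^{ji'} = 0$ for $i' \in \{1, \ldots, \tilde{k}\}$ annihilate every polynomial-in-$h$ term, leaving
$$\delta_{h,\lambda} \tilde{v}^{h}_{i}(x) - \delta_{h,\lambda} v^{(0)}_{i}(x) = \sum_{j=0}^{\tilde{k}} \tilde{\beta}_{j}\, \delta_{h,\lambda} R^{\tau, 2^{-j}h}_{i}(x),$$
and the desired estimate follows from the Cauchy--Schwarz inequality together with the uniform remainder bound. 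The main obstacle is the scale-mismatch addressed in the second paragraph; once the fine-to-coarse identity is in place, the cancellation of intermediate powers of $h$ is exactly the Richardson extrapolation argument used in the proof of Theorem \ref{thm: acceleration}.
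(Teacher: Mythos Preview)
Your argument is correct and follows the same overall strategy as the paper: apply $\delta_{h,\lambda}$ to the expansion, invoke Theorem~\ref{thm: generalized expansion} in place of Theorem~\ref{thm: expansion}, and carry out the Richardson cancellation exactly as in the proof of Theorem~\ref{thm: acceleration}. The paper's own proof is literally the one sentence ``This assertion follows from Theorem~\ref{thm: generalized expansion} in exactly the same way that Theorem~\ref{thm: acceleration} follows from~\ref{thm: expansion}.''

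The one genuine difference is your ``scale-mismatch'' paragraph. You take the statement of Theorem~\ref{thm: generalized expansion} at face value (which, applied with $2^{-j}h$, only controls $\delta_{2^{-j}h,\lambda}R^{\tau,2^{-j}h}$ on $\mathbf{G}_{2^{-j}h}$) and therefore supply a telescoping identity to express $\delta_{h,\lambda}$ as a convex combination of shifted copies of $\delta_{2^{-j}h,\lambda}$. This is correct and pleasantly elementary. The paper, by contrast, does not need this step because it is implicitly using the stronger result underlying Theorem~\ref{thm: generalized expansion}: Corollary~\ref{cor: estimate for difference of expansion error} gives $\sup_{x\in\mathbf{R}^{d}}|\delta_{h,\lambda}R^{\tau,h}_{i}(x)|^{2}\le N\|\mathfrak{r}^{\tau,h}_{i}\|_{l}^{2}$ via Sobolev embedding and Remark~\ref{rmk: differences are bounded by derivatives}, and the constant in Remark~\ref{rmk: differences are bounded by derivatives} does not depend on the difference mesh. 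Hence $\delta_{h,\lambda}R^{\tau,2^{-j}h}$ is controlled directly by $\|\mathfrak{r}^{\tau,2^{-j}h}\|_{l}$, and Theorem~\ref{thm: estimate for the error} finishes the job. Your route avoids dipping back into the Sobolev-norm machinery and works purely from the grid estimate as stated; the paper's route is shorter but relies on the reader knowing how Theorem~\ref{thm: generalized expansion} is actually proved.
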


\begin{proof}
This assertion follows from Theorem \ref{thm: generalized expansion} in exactly the same way that Theorem \ref{thm: acceleration} follows from \ref{thm: expansion}.
\end{proof}

We end this section with two examples of ways to choose appropriate $\mathfrak{a}$, $\mathfrak{b}$, $\mathfrak{p}$, $\mathfrak{q}$ and $\Lambda$. 

\begin{eg}
Let $\Lambda = \{ e_{0}, e_{1}, \dots, e_{d} \}$ where $e_{0} = 0$ and $e_{i}$ is the $i$th basis vector, that is, $\Lambda$ is the basis vectors in $\mathbf{R}^{d}$ together with the origin. Then for $i \in \{0,1, \dots, n\}$, set
\begin{gather*}
\mathfrak{a}^{e_{\alpha}e_{\beta}}_{i} := a^{\alpha\beta}_{i} \quad \text{and} \quad \mathfrak{b}^{e_{\alpha}\rho}_{i} := b^{\alpha\rho}_{i}
\end{gather*}
for each $\alpha, \beta \in \{0, 1, \dots, d\}$ and 
\begin{gather*}
\mathfrak{p}^{e_{\alpha}}_{i} = \mathfrak{q}^{e_{\alpha}}_{i} := 0
\end{gather*}
for each $\alpha \in \{1, \dots, d\}$. Then each spatial derivative $D_{\alpha}$ in \eqref{eqn: time scheme} is approximated by the symmetric difference $\delta^{h}_{e_{\alpha}}$.
\end{eg}

\begin{eg}
Let $\Lambda$ again be the basis vectors in $\mathbf{R}^{d}$ together with the origin. For $i \in \{0, \dots, n\}$, set 
\begin{gather*}
\mathfrak{a}^{00}_{i} := a^{00} \quad \text{and} \quad \mathfrak{a}^{e_{\alpha}e_{\beta}}_{i} := a^{\alpha\beta}_{i}
\end{gather*}
for each $\alpha,\beta \in \{1, \dots, d\}$ and
\begin{gather*}
\mathfrak{b}^{e_{\alpha}\rho}_{i} := b^{\alpha\rho}_{i}
\end{gather*}
for $\alpha \in \{0, \dots, d\}$. We also take $\mathcal{F}_{i} \otimes \mathcal{B}(\mathbf{R}^{d})$-measurable functions $\mathfrak{p}^{e_{\alpha}}$ and $\mathfrak{q}^{e_{\alpha}}$ for $\alpha \in \{1, \dots, d\}$ such that
\begin{gather*}
\mathfrak{p}^{e_{\alpha}}_{i} - \mathfrak{q}^{e_{\alpha}}_{i} := a^{0\alpha}_{i} + a^{\alpha 0}_{i}
\end{gather*}
for $\alpha \in \{1, \dots, d\}$. 
\end{eg}

In the next section we make observations that will be used in the proofs of Theorems \ref{thm: generalized expansion} and \ref{thm: expansion} which are given in Section \ref{sec: Proof of Main Results}.

\section{Auxiliary Results}\label{>>sec: Auxilliary Results}

We begin by delivering a proof for Theorem \ref{thm: solvability of the time scheme with estimate}. For integer $m \geq 0$, recall Lemma 2.1 from \cite{KrylovRozovskii:1982} taking the parameter $p$ in the Lemma to be $p = 2$. This Lemma holds for all $t \in [0,T]$ so in particular we have it for each $i \tau$ for $i \in \{0, \dots, n\}$.  

\begin{lem}\label{lem: estimate from Krylov and Rozovskii (1982)}
Let $\phi \in W^{m+2}_{2}$. If Assumptions \ref{asm: stochastic parabolicity} and \ref{asm: coefficients of the differential operators} hold for all multiindices $\gamma$ such that $|\gamma| \leq m$, then 
\begin{equation*}
\begin{split}
\mathcal{Q}^{\gamma}_{i} (\phi) := \int_{\mathbf{R}^{d}} 2 \left( D^{\gamma} \phi \right) D^{\gamma} \mathcal{L}_{i} \phi + \sum_{\rho=1}^{d_{1}} \left| D^{\gamma}\mathcal{M}^{\rho}_{i} \phi \right|^{2} \mathrm{d}x \leq N \left\| \phi \right\|_{m}^{2}
\end{split}
\end{equation*}
for a constant $N = N(d,d_{1},m,K_{0},\dots,K_{m})$.
\end{lem}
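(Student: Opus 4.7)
The plan follows the classical Krylov--Rozovskii energy argument: integrate by parts to convert $2 D^\gamma\phi \cdot D^\gamma \mathcal{L}_i\phi$ into a quadratic form in the top-order derivatives of $\phi$, then exploit the cancellation with the square of $D^\gamma\mathcal{M}^\rho_i\phi$ that is built into the degenerate stochastic parabolicity of Assumption~\ref{asm: stochastic parabolicity}. I would first dispatch the case $\gamma = 0$ and then lift to general $|\gamma| \leq m$ by commutator calculus.

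For $\gamma = 0$, splitting indices into $\alpha = 0$ versus $\alpha \geq 1$ (and likewise for $\beta$), an integration by parts gives
$$\int 2\phi\,a^{\alpha\beta}D_\alpha D_\beta\phi\,\mathrm{d}x = -\int 2 a^{\alpha\beta}D_\alpha\phi\,D_\beta\phi\,\mathrm{d}x - \int 2(D_\alpha a^{\alpha\beta})\phi\,D_\beta\phi\,\mathrm{d}x.$$
Combining the first summand with the quadratic $b^{\alpha\rho}b^{\beta\rho}D_\alpha\phi\,D_\beta\phi$ that appears in $\sum_\rho|\mathcal{M}^\rho_i\phi|^2$ yields the nonpositive contribution
$$-\int\bigl(2a^{\alpha\beta} - b^{\alpha\rho}b^{\beta\rho}\bigr)D_\alpha\phi\,D_\beta\phi\,\mathrm{d}x \leq 0$$
by Assumption~\ref{asm: stochastic parabolicity}. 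The remaining terms are all of type $\int c\,\phi^2\,\mathrm{d}x$ or $\int c\,\phi\,D_\alpha\phi\,\mathrm{d}x$; the latter equals $-\tfrac12\int(D_\alpha c)\phi^2\,\mathrm{d}x$ after one more integration by parts, and the resulting coefficients are bounded by Assumption~\ref{asm: coefficients of the differential operators}. This gives $\mathcal{Q}^0_i(\phi) \leq N\|\phi\|_0^2$.

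For general $\gamma$ with $|\gamma| \leq m$, I would use Leibniz to write $D^\gamma\mathcal{L}_i\phi = \mathcal{L}_i D^\gamma\phi + R^\gamma_i$ and $D^\gamma\mathcal{M}^\rho_i\phi = \mathcal{M}^\rho_i D^\gamma\phi + S^{\gamma,\rho}_i$, where $R^\gamma_i$ and $S^{\gamma,\rho}_i$ are commutator remainders in which at least one derivative is carried by a coefficient. Substituted into $\mathcal{Q}^\gamma_i$, the principal block $2 D^\gamma\phi\cdot\mathcal{L}_i D^\gamma\phi + \sum_\rho|\mathcal{M}^\rho_i D^\gamma\phi|^2$ is controlled by $N\|D^\gamma\phi\|_0^2 \leq N\|\phi\|_m^2$ via the $\gamma = 0$ argument applied to $D^\gamma\phi$ (the parabolicity cancellation recurs at the top order). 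It then remains to bound $2\int D^\gamma\phi\,R^\gamma_i\,\mathrm{d}x$, $2\sum_\rho\int \mathcal{M}^\rho_i D^\gamma\phi\cdot S^{\gamma,\rho}_i\,\mathrm{d}x$, and $\sum_\rho\int|S^{\gamma,\rho}_i|^2\,\mathrm{d}x$.

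The main obstacle is that these commutator contributions contain factors with up to $m+1$ derivatives of $\phi$, which naively threatens an $\|\phi\|_{m+1}^2$ bound. The resolution is to perform one further integration by parts to push a derivative off the worst factor onto $D^\gamma\phi$ or onto a coefficient, thereby equalizing the orders at $m$ on each side of every product; the residual top-order symmetric part that cannot be redistributed is absorbed into the parabolicity quadratic in $D_\alpha D^\gamma\phi\,D_\beta D^\gamma\phi$ and again discarded by nonnegativity. This is precisely why Assumption~\ref{asm: coefficients of the differential operators} asks for $(m+1)\vee 2$ derivatives of $a^{\alpha\beta}$ and $m+2$ derivatives of $b^{\alpha\rho}$. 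Young's inequality and the coefficient bounds then close the estimate with a constant of the stated form.
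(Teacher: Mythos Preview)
The paper does not give its own proof of this lemma; it simply cites Lemma~2.1 of Krylov--Rozovski\u\i\ \cite{KrylovRozovskii:1982} (with $p=2$) and specializes to the grid times $i\tau$. Your sketch is precisely the classical argument from that reference: integrate by parts to expose the top-order quadratic form, cancel it against the $|\mathcal{M}^\rho\phi|^2$ contribution via Assumption~\ref{asm: stochastic parabolicity}, and handle higher $\gamma$ by Leibniz commutators plus a further integration by parts. So in substance you are reproducing the cited proof rather than diverging from the paper.

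One caution on wording. In the strongly parabolic case one can genuinely ``absorb'' commutator cross terms into a negative multiple of $\|\nabla D^\gamma\phi\|_0^2$ via Young's inequality. Under mere nonnegativity (Assumption~\ref{asm: stochastic parabolicity}) there is no such slack: the parabolicity quadratic contributes at best zero, so nothing can be absorbed into it. What actually happens in the Krylov--Rozovski\u\i\ argument is that after one more integration by parts the dangerous $(m{+}1)$-order products pair off, by symmetry of the second-order principal part and the product-rule identity $D_j(b^{\alpha\rho}b^{\beta\rho})$, into terms that either cancel against their own swapped-index copies or reduce to order-$m$ products with an extra derivative on a coefficient. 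This is why the bound needs $(m+1)\vee 2$ derivatives of $a^{\alpha\beta}$ and $m+2$ derivatives of $b^{\alpha}$, as you note. Your sketch has all the right ingredients; just make sure the final paragraph is read as ``cancellation by symmetry'' rather than ``absorption by a sign'', since the latter is unavailable in the degenerate setting.
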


We use Lemma \ref{lem: estimate from Krylov and Rozovskii (1982)} to obtain estimate \eqref{eqn: estimate for the time scheme}. The existence of a solution to \eqref{eqn: time scheme} will follow from the vanishing viscosity method. 

\begin{proof}[Proof of Theorem \ref{thm: solvability of the time scheme with estimate}.]
We first assume that a sufficiently smooth solution to \eqref{eqn: time scheme} exists and obtain estimate \eqref{eqn: estimate for the time scheme} for a constant $N$ independent of $\tau$. We begin by obtaining an expression for the square of the norm for the solution to the time scheme. Then we estimate the supremum of the expectation of the square of the norm and in particular we show that this quantity is finite. With these observations in place we are then able to estimate the expectation of the supremum of the square of the norm.

For a multiindex $|\gamma| \leq m$, considering the equality $a^{2} - b^{2} = 2a(a-b) - |a-b|^{2}$ we note that \eqref{eqn: space-time scheme} implies
\begin{align*}
\left\| D^{\gamma} v_{i}\right\|_{0}^{2} - \left\| D^{\gamma} v_{i-1} \right\|_{0}^{2} &= 2 \left(D^{\gamma} v_{i}, D^{\gamma} \left(v_{i} - v_{i-1}\right)\right) - \left\|D^{\gamma}\left(v_{i} - v_{i-1}\right)\right\|_{0}^{2}\\
	& = 2\left(D^{\gamma} v_{i}, D^{\gamma}\left(\mathcal{L}_{i}v_{i} + f_{i}\right)\right)\tau - \left\|D^{\gamma}\left(v_{i} - v_{i-1}\right) \right\|_{0}^{2} \\ & \quad + 2 \sum_{\rho=1}^{d_{1}}\left(D^{\gamma}v_{i-1}, D^{\gamma}\left(\mathcal{M}^{\rho}_{i-1}v_{i-1} + g^{\rho}_{i-1}\right)\right)\xi^{\rho}_{i}\\
	& \quad + 2\sum_{\rho=1}^{d_{1}}\left(D^{\gamma}\left(v_{i} - v_{i-1}\right),D^{\gamma}\left(\mathcal{M}^{\rho}_{i-1}v_{i-1} + g^{\rho}_{i-1}\right)\right)\xi^{\rho}_{i}\\
	&= 2\left(D^{\gamma} v_{i}, D^{\gamma}\left(\mathcal{L}_{i}v_{i} + f_{i}\right)\right)\tau - \left\|D^{\gamma}\left(\mathcal{L}_{i} v_{i} + f_{i}\right)\right\|_{0}^{2} \tau^{2} \\ &\quad + 2\sum_{\rho=1}^{d_{1}}\left(D^{\gamma}v_{i-1}, D^{\gamma}\left(\mathcal{M}^{\rho}_{i-1}v_{i-1} + g^{\rho}_{i-1}\right)\right)\xi^{\rho}_{i}\\
	& \quad + \left\| \sum_{\rho=1}^{d_{1}} D^{\gamma}\left(\mathcal{M}^{\rho}_{i-1}v_{i-1} + g^{\rho}_{i-1}\right) \xi^{\rho}_{i}\right\|_{0}^{2}.
\end{align*} 
Summing up over $i$ from $1$ to $j$ and over $|\gamma| \leq m$, we have 
\begin{equation}
\label{eqn: continuous intermediate equation with H, I, J}
\left\| v_{j} \right\|_{m}^{2} \leq \left\| v_{0}\right\|_{m}^{2} + H_{j} + I_{j} + J_{j},
\end{equation}
where 
\begin{equation*}
H_{j} := 2 \sum_{i=1}^{j} \left(D^{m}v_{i}, D^{m}\left(\mathcal{L}_{i} v_{i} + f_{i} \right)\right)\tau,
\end{equation*}
\begin{equation*}
I_{j} := 2 \sum_{i=1}^{j} \sum_{\rho=1}^{d_{1}} \left(D^{m}v_{i-1}, D^{m}\left(\mathcal{M}^{\rho}_{i-1} v_{i-1} + g^{\rho}_{i-1} \right)\right)\xi^{\rho}_{i},
\end{equation*}
and 
\begin{equation*}
J_{j} := \sum_{i=1}^{j} \left\| \sum_{\rho =1}^{d_{1}} D^{m} \left(\mathcal{M}^{\rho}_{i-1} v_{i-1} + g^{\rho}_{i-1}\right) \xi^{\rho}_{i} \right\|_{0}^{2}.
\end{equation*}

By an application of It{\^o}'s formula, for each $\pi,\rho \in \{1, \dots, d_{1}\}$ one has that for all $i \in \{0, \dots, n-1\}$
\begin{equation*}
\xi^{\pi}_{i+1}\xi^{\rho}_{i+1} = (w^{\pi}_{i+1} - w^{\pi}_{i}) (w^{\rho}_{i+1} - w^{\rho}_{i}) = Y^{\pi \rho}_{i+1} - Y^{\pi \rho}_{i} + \tau \chi_{\pi \rho}
\end{equation*}
for
$$Y^{\pi \rho}(t) := \int_{0}^{t} \left(w^{\pi}(s) - w^{\pi}_{\kappa(s)} \right)\mathrm{d}w^{\rho}(s) + \int_{0}^{t} \left(w^{\rho}(s) - w^{\rho}_{\kappa(s)}\right)\mathrm{d}w^{\pi}(s)$$ 
where $\kappa(s)$ is the piecewise defined function taking value $\kappa(s) = i$ for $s \in [i\tau,(i+1)\tau)$ and where $\chi_{\pi\rho} = 1$ when $\pi = \rho$ and $0$ otherwise. Thus we can write $J_{j} = J^{(1)}_{j} + J^{(2)}_{j}$ where
\begin{equation*}
J^{(1)}_{j} := \sum_{i=1}^{j} \left\| \sum_{\rho=1}^{d_{1}} D^{m}\left( \mathcal{M}^{\rho}_{i-1} v_{i-1} + g^{\rho}_{i-1} \right)\right\|_{0}^{2} \tau
\end{equation*}
and
\begin{equation*}
J^{(2)}_{j} := \int_{0}^{j\tau} \sum_{\pi,\rho = 1}^{d_{1}} \left(D^{m}\left(\mathcal{M}^{\pi}_{\kappa(s)} v_{\kappa(s)} + g^{\pi}_{\kappa(s)}\right), D^{m}\left(\mathcal{M}^{\rho}_{\kappa(s)} v_{\kappa(s)} + g^{\rho}_{\kappa(s)}\right)\right) \mathrm{d} Y^{\pi \rho}(s).
\end{equation*}

Now observe that, for each $i \in \{1, \dots, n\}$, by Lemma \ref{lem: estimate from Krylov and Rozovskii (1982)} we have
\begin{align}
H_{j} + J^{(1)}_{j} 
	&\leq N \tau \sum_{\alpha = 0}^{d} \left\| D_{\alpha} v_{0}\right\|_{m}^{2} + N \tau \sum_{i=1}^{j} \sum_{|\gamma|\leq m} \left( \mathcal{Q}^{\gamma}_{i} (v_{i}) + \left\| D^{\gamma} f_{i}\right\|_{0}^{2} + \left\|D^{\gamma} g_{i-1}\right\|_{0}^{2}\right) \notag\\
	&\leq N \tau \left\| v_{0}\right\|_{m+1}^{2} + N \tau \sum_{i=1}^{j} \left(\left\|v_{i}\right\|_{m}^{2} +\left\| f_{i} \right\|_{m}^{2} + \left\| g_{i-1}\right\|_{m}^{2}\right) \label{eqn: for the remark},
\end{align}
where $N = N (d,d_{1},m,K_{0}, \dots, K_{m+1})$. Note that we only require $b^{\alpha}$ to have bounded derivatives of at most order $m+1$; the other coefficients only need to have bounded derivatives of at most order $m$ at this stage. Here the initial condition $v_{0}$ enters, estimated in the $W^{m+1}_{2}$-norm, due to the displacement caused by the discretization in time when we consider the quadratic form $\mathcal{Q}^{\gamma}$ from Lemma \ref{lem: estimate from Krylov and Rozovskii (1982)}. Thus inequality \eqref{eqn: continuous intermediate equation with H, I, J} becomes
\begin{equation}
\label{eqn: continuous intermediate equation after applying Q bound }
\left\| v_{j} \right\|_{m}^{2} \leq N \tau \left\|v_{0}\right\|_{m+1}^{2} + N \tau \sum_{i=1}^{j}\left( \left\|v_{i}\right\|_{m}^{2} + \left\| f_{i} \right\|_{m}^{2} + \left\| g_{i-1}\right\|_{m}^{2}\right) + I_{j} + J^{(2)}_{j}.
\end{equation}
Since $v_{i}$, $\mathcal{M}^{\rho}_{i} v_{i}$, and $g^{\rho}_{i}$ are all $\mathcal{F}_{i}$-measurable and $\xi^{\rho}_{i+1}$ is independent of $\mathcal{F}_{i}$ for $i \in \{0, \dots, n\}$, we have that
$$ \mexp I_{j} = 2 \sum_{i=1}^{j} \sum_{\rho=1}^{d_{1}} \mexp \left\{ \left( D^{\gamma} v_{i-1}, D^{\gamma}\left(\mathcal{M}^{\rho}_{i-1} v_{i-1} + g^{\rho}\right) \right) \mexp \left( \xi^{\rho}_{i} \mid \mathcal{F}_{i-1} \right) \right\} = 0.$$
Similarly, we see that $\mexp J^{(2)}_{j} = 0$ since the expectation of the stochastic integral is zero. Therefore, taking the expectation of \eqref{eqn: continuous intermediate equation after applying Q bound } and the sum of $f$ and $g$ over $i \in \{0, \dots, n\}$, we have that
\begin{equation}
\label{eqn: continuous intermediate equation after taking the expectation and maximum}
\mexp \left\| v_{j} \right\|_{m}^{2}  \leq N \left(\tau \mexp \left\|v_{0}\right\|_{m+1}^{2} +\left\llbracket f \right\rrbracket_{m}^{2} + \left\llbracket g \right\rrbracket_{m}^{2}\right) + N \tau \mexp \sum_{i=1}^{j} \left\| v_{i} \right\|_{m}^{2}
\end{equation}
for each $j \in \{1, \dots, n\}$. Applying a discrete Gronwall lemma to \eqref{eqn: continuous intermediate equation after taking the expectation and maximum} we have
$$ \mexp \left\| v_{j}\right\|_{m}^{2} \leq N \left( \tau \mexp \left\| v_{0}\right\|_{m+1}^{2} + \left\llbracket f \right\rrbracket_{m}^{2} + \left\llbracket g \right\rrbracket_{m}^{2}\right) \left(1 - N \tau\right)^{-j}$$
and, since 
$$\left(1 - N\tau\right)^{-j} = \left(1- N \frac{T}{n}\right)^{-j} \leq \left(1 - N \frac{T}{n}\right)^{-n} \leq C e^{NT},$$ 
we have the following estimate for the supremum of the expectation of the square of the norm
\begin{equation}
\label{eqn: continuous intermediate bound on max E v after applying discrete Gronwall lemma}
\max_{i \leq n} \mexp \left\| v_{i}\right\|_{m}^{2} \leq N \left( \tau \mexp \left\| v_{0} \right\|_{m+1}^{2} +  \left\llbracket f \right\rrbracket_{m}^{2} +  \left\llbracket g \right\rrbracket_{m}^{2} \right)
\end{equation}
for a constant $N = N (d,d_{1},m,T, K_{0}, \dots, K_{m+1})$. In particular, we can use \eqref{eqn: continuous intermediate bound on max E v after applying discrete Gronwall lemma} to eliminate the last term on the right-hand side of \eqref{eqn: continuous intermediate equation after taking the expectation and maximum} by bounding it with terms already appearing on the right-hand side \eqref{eqn: continuous intermediate equation after taking the expectation and maximum}. 

Next we approach the estimate for the expectation of the supremum by first observing how to bound the $I$ and $J^{(2)}$ terms appearing in \eqref{eqn: continuous intermediate equation after applying Q bound } using the Burkholder--Davis--Gundy inequality. For $J^{(2)}$ we have
\begin{align*}
\mexp & \max_{i \leq n} \left|J^{(2)}_{i}\right|
	\\ &\leq C \sum_{\pi,\rho=1}^{d_{1}} \mexp \left\{ \int_{0}^{T} \left\|\mathcal{M}^{\rho}_{\kappa(s)} v_{\kappa(s)} + g^{\rho}_{\kappa(s)}\right\|_{m}^{2} \left\| \mathcal{M}^{\pi}_{\kappa(s)}v_{\kappa(s)} + g^{\pi}_{\kappa(s)}\right\|_{m}^{2} \mathrm{d} \left\langle Y^{\pi\rho}\right\rangle(s)\right\}^{1/2}\\
	&\leq C \sum_{\pi,\rho=1}^{d_{1}} \mexp \left\{ \int_{0}^{T} \left\|\mathcal{M}^{\rho}_{\kappa(s)} v_{\kappa(s)} + g^{\rho}_{\kappa(s)}\right\|_{m}^{4} \left|w^{\pi}(s) - w^{\pi}_{\kappa(s)}\right|^{2} \mathrm{d}s \right\}^{1/2}\\
	&\leq C \sum_{\pi,\rho=1}^{d_{1}} \mexp \max_{i \leq n} \sqrt{\tau} \left\|\mathcal{M}^{\rho}_{i} v_{i} + g^{\rho}_{i}\right\|_{m} \\
		&\qquad \times \left\{ \frac{1}{\tau} \int_{0}^{T} \left\|\mathcal{M}^{\rho}_{\kappa(s)} v_{\kappa(s)} + g^{\rho}_{\kappa(s)}\right\|_{m}^{2} \left|w^{\pi}(s) - w^{\pi}_{\kappa(s)} \right|^{2} \mathrm{d}s \right\}^{1/2}
\end{align*}
where $C$ is a constant independent of the parameters and functions under consideration and is allowed to change from one instance to the next. Therefore,
\begin{equation}
\label{eqn: continuous intermediate estimate for E max J-2}
\begin{split}
\mexp \max_{i \leq n} \left| J^{(2)}_{i} \right| \leq& d_{1} C \sum_{\rho=1}^{d_{1}} \tau \mexp \max_{i \leq n} \left\| \mathcal{M}^{\rho}_{i}v_{i} + g^{\rho}_{i}\right\|_{m}^{2} \\ & + \frac{C}{\tau} \sum_{\pi,\rho=1}^{d_{1}} \mexp \int_{0}^{T} \left\| \mathcal{M}^{\rho}_{\kappa(s)} v_{\kappa(s)} + g^{\rho}_{\kappa(s)}\right\|_{m}^{2} \left|w^{\pi}(s) - w^{\pi}_{\kappa(s)}\right|^{2} \, \mathrm{d}s
\end{split}
\end{equation}
by Young's inequality. We observe that the second term on the right-hand side of \eqref{eqn: continuous intermediate estimate for E max J-2} can be estimated by 
\begin{align*}
\frac{1}{\tau}  \sum_{\pi, \rho=1}^{d_{1}} & \mexp  \int_{0}^{T} \left\| \mathcal{M}^{\rho}_{\kappa(s)} v_{\kappa(s)} + g^{\rho}_{\kappa(s)} \right\|_{m}^{2} \left| w^{\pi}(s) - w^{\pi}_{\kappa(s)}\right|^{2} \mathrm{d}s \\
	& \leq \frac{1}{\tau} \sum_{\pi, \rho=1}^{d_{1}} \mexp \left\{ \int_{0}^{T} \left\| \mathcal{M}^{\rho}_{\kappa(s)} v_{\kappa(s)} + g^{\rho}_{\kappa(s)} \right\|_{m}^{2} \mexp \left( \left| w^{\pi}(s) - w^{\pi}_{\kappa(s)}\right|^{2} \;\middle\vert\; \mathcal{F}_{\kappa(s)} \right) \mathrm{d}s \right\} \\
	&\leq N \tau \mexp \sum_{i=0}^{n} \left\| v_{i} \right\|_{m+1}^{2} + N \left\llbracket g \right\rrbracket_{m+1}^{2}
\end{align*}
using the tower property for conditional expectations. Further, the first term on the right-hand side of \eqref{eqn: continuous intermediate estimate for E max J-2} is bounded from above by the sum over all $i \in \{1, \dots, n\}$ and can be estimated by the same quantity. Combining these estimates and using \eqref{eqn: continuous intermediate bound on max E v after applying discrete Gronwall lemma} with $m + 1$ in place of $m$ we see that $\mexp \max | J^{(2)} |$ is estimated by 
\begin{equation}\label{eqn: continuous bound for E max J-2}
\mexp \max_{i \leq n} \left| J^{(2)}_{i} \right| \leq  N \left( \tau \mexp \left\| v_{0} \right\|_{m+2}^{2} + \left\llbracket f \right\rrbracket_{m+1}^{2} + \left\llbracket g\right\rrbracket_{m+1}^{2} \right)
\end{equation}
for a constant $N = N (d,d_{1}, m,T, K_{0}, \dots, K_{m+2})$. Moving on to $I$, we note that
\begin{align*}
I_{j} &= 2 \sum_{i=1}^{j} \sum_{\rho=1}^{d_{1}} \left(D^{m}v_{i-1}, D^{m}\left(\mathcal{M}^{\rho}_{i-1} v_{i-1} + g^{\rho}_{i-1} \right)\right) \left(w^{\rho}_{i} - w^{\rho}_{i-1}\right) \\
	&= 2 \sum_{\rho=1}^{d_{1}} \int_{0}^{j\tau} \left(D^{m} v_{\kappa(s)},D^{m}\left( \mathcal{M}^{\rho}_{\kappa(s)} v_{\kappa(s)} + g^{\rho}_{\kappa(s)}\right)\right) \mathrm{d}w^{\rho}(s).
\end{align*}
Applying the Burkholder--Davis--Gundy inequality once again, we obtain
\begin{align*}
\mexp \max_{i \leq n} \left| I_{i} \right| 
	&\leq C \sum_{\rho=1}^{d_{1}} \mexp \left\{ \int_{0}^{T} \left\| v_{\kappa(s)} \right\|_{m}^{2} \left\| \mathcal{M}^{\rho}_{\kappa(s)} v_{\kappa(s)} + g^{\rho}_{\kappa(s)} \right\|_{m}^{2} \mathrm{d}s \right\}^{1/2}\\
	&\leq C \sum_{\rho=1}^{d_{1}} \mexp \left\{ \max_{i \leq n} \left\|v_{i}\right\|_{m} \left( \int_{0}^{T} \left\| \mathcal{M}^{\rho}_{\kappa(s)}v_{\kappa(s)} + g^{\rho}_{\kappa(s)} \right\|_{m}^{2} \mathrm{d}s \right)^{1/2}\right\}
\end{align*}
and then using Young's inequality followed by \eqref{eqn: continuous intermediate bound on max E v after applying discrete Gronwall lemma} with $m+1$ in place of $m$ we see that $\mexp \max | I |$ is estimated by the same quantity appearing on the right and side of \eqref{eqn: continuous bound for E max J-2}. 

Returning to \eqref{eqn: continuous intermediate equation after applying Q bound }, taking the maximum followed by the expectation, and using the estimates for the expectation of the supremum of $|J^{(2)}|$ and $| I|$, we see that
\begin{equation}\label{eqn: full estimate for the time scheme in the proof}
\mexp \max_{i \leq n} \left\| v_{i} \right\|_{m}^{2} \leq N \left( \tau \mexp \left\| v_{0} \right\|_{m+2}^{2} + \left\llbracket f \right\rrbracket_{m+1}^{2} + \left\llbracket g \right\rrbracket_{m+1}^{2}\right) = N\mathcal{K}_{m}^{2},
\end{equation}
holds with a constant $N = N (d,d_{1},m, T, K_{0}, \dots, K_{m+2})$, thus establishing \eqref{eqn: estimate for the time scheme}. Next, we use the vanishing viscosity method to show that \eqref{eqn: time scheme} admits a solution.

For $\varepsilon > 0$, we let $\mathcal{L}^{\varepsilon}_{i} \phi := \mathcal{L}_{i} \phi + \varepsilon \triangle \phi$ where $\triangle := \sum_{\alpha = 1}^{d} D_{\alpha}D_{\alpha}$ is the Laplacian. Notice the leading coefficient of the operator $\mathcal{L}^{\varepsilon}_{i}$ is given by $\bar{a}^{\alpha\beta}_{i} := a^{\alpha\beta}_{i} + \varepsilon \chi_{\alpha\beta}$, where $\chi_{\alpha\beta} = 1$ for $\alpha = \beta$ and zero otherwise. We then consider the equation
\begin{equation}\label{eqn: vanishing viscosity equation for time scheme}
v^{\varepsilon}_{i} = v^{\varepsilon}_{i-1} + \left( \mathcal{L}^{\varepsilon}_{i} v^{\varepsilon}_{i} + f_{i}\right) \tau + \sum_{\rho=1}^{d_{1}} \left( \mathcal{M}^{\rho}_{i-1} v^{\varepsilon}_{i-1} + g^{\rho}_{i-1} \right) \xi^{\rho}_{i}
\end{equation}
for each $i \in \{1, \dots, n\}$ with initial condition $v^{\varepsilon}_{0} = v_{0}$. Proving the solvability of \eqref{eqn: vanishing viscosity equation for time scheme} reduces to solving, for each $\omega \in \Omega$, the elliptic problem 
\begin{equation}\label{eqn: elliptic problem}
\mathcal{A}_{i} v^{\varepsilon}_{i} = F_{i}
\end{equation}
for each $i \in \{1, \dots, n\}$ with free term
$$F_{i} := v^{\varepsilon}_{i-1} + \tau f_{i} + \sum_{\rho=1}^{d_{1}} \xi^{\rho}_{i} \left( \mathcal{M}^{\rho}_{i-1} v^{\varepsilon}_{i-1} + g^{\rho}_{i-1}\right)$$ 
and operator  
$$\mathcal{A}_{i} := \left(I - \tau \mathcal{L}^{\varepsilon}_{i}\right)$$ 
where $I$ is the identity. That is, we claim that $\mathcal{A}_{i}$ is
  \begin{enumerate}
  \item bounded, \emph{i.e.}\ $\| \mathcal{A}_{i} \phi \|_{m}^{2} \leq K \| \phi \|_{m+2}^{2}$ for a constant $K$, 
  \item and coercive for sufficiently small $\tau$, \emph{i.e.}\ $\left\langle \mathcal{A}_{i} \phi, \phi \right\rangle \geq \frac{\varepsilon}{2} \| \phi \|_{m+2}^{2}$,
  \end{enumerate}
for all $\phi \in W^{m+2}_{2}$ for every $i \in \{1, \dots, n\}$, where $\left\langle \cdot{},\cdot{} \right\rangle$ denotes the duality pairing between $W^{m+2}_{2}$ and $W^{m}_{2}$ based on the inner product in $W^{m+1}_{2}$. We will obtain the existence of a solution $v^{\varepsilon}_{i}$ to \eqref{eqn: elliptic problem} for each $i \in \{1, \dots, n\}$ via Galerkin approximations (of course, in this instance, one could also use the Lax--Milgram Theorem). 

For integer $p \geq 0$, let $E_{p}$ be the $p$-dimensional subspace of $W^{m+2}_{2}$ spanned by the first $p$ elements of $\{ e_{j}; j \in \mathbf{N}\}$, a collection of vectors from $W^{m+2}_{2}$ forming an orthonormal basis for $W^{m+1}_{2}$. We seek an approximate solution $\phi^{p}_{i} \in E_{p}$ to 
\begin{equation*}
\left\langle \mathcal{A}_{i} \phi^{p}_{i}, e_{k} \right\rangle = \left\langle F_{i}, e_{k} \right\rangle
\end{equation*}
for each $k \in \{1, \dots, p\}$. Rewriting $\phi^{p}_{i} = c^{j}_{p} e_{j}$ for coefficients $c^{j}_{p}$, where the summation convention is used with respect to the repeated index $j \in \{1, \dots, p\}$, we see that $\phi^{p}_{i}$ is an approximate solution if and only if $c^{j}_{p}$ is $W^{m+1}_{2}$-valued and satisfies the system of ordinary differential equations 
\begin{equation*}
c^{j}_{p} \left\langle \mathcal{A}_{i} e_{j}, e_{k} \right\rangle = \left\langle	F_{i}, e_{k}\right\rangle 
\end{equation*}
for each $p$. We derive the estimate 
$$ \frac{\varepsilon}{2} \left\| \phi^{p}_{i} \right\|_{m+2}^{2} \leq \left\langle \mathcal{A}_{i} \phi^{p}_{i}, \phi^{p}_{i} \right\rangle = \left\langle F_{i}, \phi^{p}_{i} \right\rangle \leq \left\| F_{i} \right\|_{m} \left\| \phi^{p}_{i} \right\|_{m+2}.$$
From this we see that $\langle \mathcal{A}_{i} e_{j}, e_{k} \rangle$ is invertible and thus a solution $c^{j}_{p}$, and hence an approximate solution $\phi^{p}_{i}$, exists and moreover we have the estimate $$\mexp \| \phi^{p}_{i} \|_{m+1} \leq 2 \varepsilon^{-1} \mexp \| F_{i} \|_{m}$$ uniformly in $p$. Thus, there exists a $v^{\varepsilon}_{i} \in W^{m+2}_{2}$ and a subsequence $p_{k}$ such that $\phi^{p_{k}}_{i} \to v^{\varepsilon}_{i}$ weakly in $W^{m+2}_{2}$. Therefore, for each $i \in \{1, \dots, n\}$ there exists a $v^{\varepsilon}_{i}$ satisfying \eqref{eqn: elliptic problem} and, moreover, this solution is easily seen to  be unique. Hence, we construct a unique solution to \eqref{eqn: vanishing viscosity equation for time scheme} iteratively. 

Using the existence and uniqueness to the elliptic problem in each interval, we note that 
$$v_{0} + \tau f_{1} + \sum_{\rho=1}^{d_{1}} \left(\mathcal{M}^{\rho}_{0} v_{0} + g^{\rho}_{0}\right)\xi^{\rho}_{1} \in W^{m}_{2},$$ 
by the assumptions on $v_{0}$, $f$, and $g^{\rho}$, for $\rho \in \{1, \dots, d_{1}\}$, and therefore there exists a $v^{\varepsilon}_{1} \in W^{m+2}_{2}$ satisfying 
$$\mathcal{A}_{1} v^{\varepsilon}_{1} = v_{0} + \tau f_{1} + \sum_{\rho=0}^{d_{1}}\left(\mathcal{M}^{\rho}_{0} v_{0} + g^{\rho}_{0}\right)\xi^{\rho}_{1}.$$ 
Further, assuming that there exists a $v^{\varepsilon}_{i} \in W^{m+2}_{2}$ satisfying \eqref{eqn: vanishing viscosity equation for time scheme}, we have that 
$$v^{\varepsilon}_{i} + \tau f_{i+1} + \sum_{\rho=0}^{d_{1}} \left(\mathcal{M}^{\rho}_{i}v^{\varepsilon}_{i} + g^{\rho}_{i}\right)\xi^{\rho}_{i+1} \in W^{m}_{2}$$ 
by the induction hypothesis and Assumption \ref{asm: on initial conditions and free terms}, and therefore there exists a $v^{\varepsilon}_{i+1} \in W^{m+2}_{2}$ satisfying \eqref{eqn: vanishing viscosity equation for time scheme}. Hence, we obtain $v^{\varepsilon}= \left(v^{\varepsilon}_{i}\right)_{i=1}^{n}$ such that each $v^{\varepsilon}_{i} \in W^{m+2}_{2}$ satisfying \eqref{eqn: vanishing viscosity equation for time scheme}.

Finally, we observe that the estimate \eqref{eqn: full estimate for the time scheme in the proof} can be obtained for the solution $v^{\varepsilon}$ to \eqref{eqn: vanishing viscosity equation for time scheme} in a similar manner. In particular, this gives a uniform estimate in $\varepsilon$ for the solution to \eqref{eqn: vanishing viscosity equation for time scheme}. Therefore, there exists a subsequence $\varepsilon_{k} \to 0$ and a $W^{m}_{2}$-valued $\mathcal{F}_{i}$-measurable $v_{i}$ such that $v^{\varepsilon_{k}}_{i}$ converges weakly to $v_{i}$ as $k \to \infty$ in $W^{m}_{2}$ for each $i \in \{1, \dots, n\}$. This $v = \left( v_{i} \right)_{i=1}^{n}$ is the solution to \eqref{eqn: time scheme} and is easily seen to be unique.
\end{proof}

\begin{rmk}
We consider an implicit scheme where the operators $\mathcal{L}$ and $\mathcal{M^{\rho}}$ take values at the points of the time grid. The displacement observed in \eqref{eqn: for the remark} caused by the discretization in time can be avoided if we consider a modified implicit scheme. Namely, if we consider operators defined to be the average over the intervals defined by consecutive points of the time grid, as in \cite{GyongyMillet:2007}, we could then take $\mathcal{M}^{\rho}_{0} v_{0} := 0$. However, we believe such a scheme would be less practical from a computational standpoint.
\end{rmk}

The lemma below is given in \cite{Gyongy:2011} for all $t \in [0,T]$ so, in particular, we have the following for each $i\tau$ for $i \in \{0, \dots, n\}$. This lemma plays the role of Lemma \ref{lem: estimate from Krylov and Rozovskii (1982)} for obtaining the estimate for the space-time scheme.

\begin{lem}\label{lem: Q-estimate}
Let $\phi \in W^{l+2}_{2}$. If Assumptions \ref{asm: parabolicty condition for difference operators} and \ref{asm: coefficients of the difference operators} hold, then for all multiindices $\gamma$ such that $|\gamma| \leq l$, then
\begin{equation*}
Q^{\gamma}_{i}(\phi) = \int_{\mathbf{R}^{d}} 2 \left(D^{\gamma} \phi (x)\right) D^{\gamma} L^{h}_{i}\phi(x) + \sum_{\rho = 1}^{d_{1}} \left| D^{\gamma} M^{h,\rho}_{i} \phi (x) \right|^{2} \mathrm{d}x \leq N \left\| \phi \right\|_{l}^{2}
\end{equation*}
for all $i \in \{1, \dots, n\}$ for a constant $N = N(d,d_{1},d_{2},l,\hat{K}_{0}, \dots, \hat{K}_{l+1},\Lambda)$.
\end{lem}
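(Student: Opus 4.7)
The plan is to adapt the standard energy-style argument used to prove Lemma \ref{lem: estimate from Krylov and Rozovskii (1982)} (for the continuous operators) to the finite difference setting, exploiting the decomposition $\tilde{\mathfrak{a}}^{\lambda\mu}_{i} = \sum_{r} \sigma^{\lambda r}_{i} \sigma^{\mu r}_{i}$ granted by Assumption \ref{asm: parabolicty condition for difference operators}. First, I would apply $D^{\gamma}$ to $L^{h}_{i}\phi$ and $M^{h,\rho}_{i}\phi$ and expand via the Leibniz rule, splitting each derivative into a principal term (where every derivative lands on $\phi$) and a remainder in which at least one derivative lands on a coefficient. The coefficient factors in all remainder terms are uniformly bounded, because Assumption \ref{asm: coefficients of the difference operators} controls $\mathfrak{a}^{\lambda\mu}_i$, $\mathfrak{b}^{\lambda\rho}_i$, $\mathfrak{p}^{\lambda}_i$, $\mathfrak{q}^{\lambda}_i$, $\sigma^{\lambda r}_i$ through order $l+1$ (and for $\mathfrak{b}$, through $l+2$, which is what one needs after also opening the square in $|D^{\gamma} M^{h,\rho}_{i}\phi|^{2}$).

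The heart of the proof is the handling of the principal terms. By translation invariance the symmetric difference is skew with respect to the $L^{2}$-pairing, $\int (\delta^{h}_{\lambda}\phi)\psi\,dx = -\int \phi(\delta^{h}_{\lambda}\psi)\,dx$, together with the shifted product rule $\delta^{h}_{\lambda}(fg) = (T_{h,\lambda}f)\delta^{h}_{\lambda}g + (\delta^{h}_{\lambda}f)(T_{-h,\lambda}g)$. Applying these to
\begin{equation*}
\int 2(D^{\gamma}\phi)\,\mathfrak{a}^{\lambda\mu}_{i}\,\delta^{h}_{\lambda}\delta^{h}_{\mu}(D^{\gamma}\phi)\,dx
\end{equation*}
transfers one $\delta^{h}_{\lambda}$ off of $D^{\gamma}\phi$ and produces, up to terms in which a difference has fallen on $\mathfrak{a}^{\lambda\mu}_{i}$ (and thus gives a bounded factor times $(\delta^{h}_{\mu} D^{\gamma}\phi)$ or a shifted $D^{\gamma}\phi$), the expression $-\int 2\mathfrak{a}^{\lambda\mu}_{i}(\delta^{h}_{\lambda} D^{\gamma}\phi)(\delta^{h}_{\mu} D^{\gamma}\phi)\,dx$. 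Expanding the principal part of $\sum_{\rho}|D^{\gamma} M^{h,\rho}_{i}\phi|^{2}$ yields $\int \mathfrak{b}^{\lambda\rho}_{i}\mathfrak{b}^{\mu\rho}_{i}(\delta^{h}_{\lambda}D^{\gamma}\phi)(\delta^{h}_{\mu}D^{\gamma}\phi)\,dx$, so combining the two principal contributions gives
\begin{equation*}
-\int \tilde{\mathfrak{a}}^{\lambda\mu}_{i}(\delta^{h}_{\lambda} D^{\gamma}\phi)(\delta^{h}_{\mu} D^{\gamma}\phi)\,dx = -\sum_{r=1}^{d_{2}} \int \Bigl(\sum_{\lambda\in\Lambda_{0}} \sigma^{\lambda r}_{i}\,\delta^{h}_{\lambda} D^{\gamma}\phi\Bigr)^{2} dx \leq 0
\end{equation*}
by Assumption \ref{asm: parabolicty condition for difference operators}. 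Here I would be slightly careful: pulling the $\sigma^{\lambda r}_{i}$ inside the square introduces a cross term in which a derivative (or a discrete difference) falls on $\sigma^{\lambda r}_{i}$; this is absorbed into the remainder by the $\hat{K}_{j}$-bound and Cauchy--Schwarz, at the cost of a term of order $\|\phi\|_{l}^{2}$.

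The remaining work is to show that every term not already controlled is bounded by $N\|\phi\|_{l}^{2}$. These are of three types: (i) commutator terms from Leibniz, where some $D^{\beta}$ with $|\beta|\geq 1$ hits $\mathfrak{a}^{\lambda\mu}_{i}$, $\mathfrak{b}^{\lambda\rho}_{i}$, $\mathfrak{a}^{\lambda 0}_{i}$, $\mathfrak{a}^{00}_{i}$, etc., leaving at most $l+1$ derivatives on $\phi$ after one integration by parts in the symmetric difference; (ii) contributions of the convection-type terms $\mathfrak{p}^{\lambda}_{i}\delta_{h,\lambda}\phi - \mathfrak{q}^{\lambda}_{i}\delta_{-h,\lambda}\phi$, whose coefficients are only assumed $l$-times differentiable but which carry only first-order (non-symmetric) differences, so an $L^{2}$-Cauchy--Schwarz after pairing with $D^{\gamma}\phi$ immediately produces $\|\phi\|_{l}^{2}$; and (iii) the non-principal $\mathfrak{a}^{0\mu}, \mathfrak{a}^{\lambda 0}, \mathfrak{a}^{00}$ pieces, handled analogously. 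The main obstacle is bookkeeping the commutator/shift terms produced by the discrete product rule: each such term must be verified to use at most one more derivative of $\phi$ than is available (i.e. $|\gamma|+1 \leq l+1$) and at most $l+1$ derivatives of $\mathfrak{b}$ or $l$ derivatives of the other coefficients, matching exactly the hypotheses of Assumption \ref{asm: coefficients of the difference operators}. Assembling the non-positive principal contribution with the $N\|\phi\|_{l}^{2}$ bound on everything else yields the claimed estimate, with the constant $N$ depending on $d,d_{1},d_{2},l,\hat{K}_{0},\ldots,\hat{K}_{l+1},\Lambda$ through the number of terms produced by Leibniz and the bounds on the coefficients.
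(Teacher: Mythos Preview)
The paper does not prove this lemma; it is quoted from \cite{Gyongy:2011}. Your outline captures the correct architecture --- Leibniz expansion, skew-adjointness of $\delta^{h}_{\lambda}$, the discrete product rule, and the factorisation $\tilde{\mathfrak a}^{\lambda\mu}=\sum_{r}\sigma^{\lambda r}\sigma^{\mu r}$ to make the principal contribution nonpositive --- and this matches the approach of \cite{Gyongy:2011}.

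There is, however, a genuine gap in your treatment of the $\mathfrak{p}^{\lambda}$, $\mathfrak{q}^{\lambda}$ terms. You write that ``an $L^{2}$--Cauchy--Schwarz after pairing with $D^{\gamma}\phi$ immediately produces $\|\phi\|_{l}^{2}$''. It does not: for $|\gamma|=l$ the principal piece is $\int (D^{\gamma}\phi)\,\mathfrak{p}^{\lambda}_{i}\,\delta_{h,\lambda}D^{\gamma}\phi\,dx$, and Cauchy--Schwarz yields only $\hat K_{0}\,\|D^{\gamma}\phi\|_{0}\,\|\delta_{h,\lambda}D^{\gamma}\phi\|_{0}\le N\|\phi\|_{l}\|\phi\|_{l+1}$, uniformly in $h$. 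The non-symmetric differences $\delta_{\pm h,\lambda}$ are \emph{not} skew-adjoint, so the trick you used for the $\mathfrak{a}^{0\lambda}$, $\mathfrak{a}^{\lambda 0}$ terms does not apply either. This is precisely why Assumption~\ref{asm: parabolicty condition for difference operators}(i) imposes $\mathfrak{p}^{\lambda}\ge 0$ and $\mathfrak{q}^{\lambda}\ge 0$, a condition you never invoke. The correct argument uses the elementary identity $2a(b-a)=b^{2}-a^{2}-(b-a)^{2}$: with $u=D^{\gamma}\phi$,
\[
\int 2u\,\mathfrak{p}^{\lambda}\,\delta_{h,\lambda}u\,dx
= -\int(\delta_{-h,\lambda}\mathfrak{p}^{\lambda})\,u^{2}\,dx
- h\int \mathfrak{p}^{\lambda}\,|\delta_{h,\lambda}u|^{2}\,dx.
\]
The second integral is nonpositive for $h>0$ because $\mathfrak{p}^{\lambda}\ge 0$, and the first is bounded by $N\|u\|_{0}^{2}$ since $|\delta_{-h,\lambda}\mathfrak{p}^{\lambda}|\le |\lambda|\hat K_{1}$. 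The $\mathfrak{q}^{\lambda}$ term is handled symmetrically. This also explains why the estimate (and hence Theorem~\ref{thm: estimate for space-time scheme}) is stated for $h>0$ in general and for all nonzero $h$ only when $\mathfrak{p}^{\lambda}=\mathfrak{q}^{\lambda}=0$.

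A secondary point: in the commutator terms arising from the discrete product rule you are left with factors of the form $(\text{bounded})\cdot u\cdot(\delta^{h}_{\mu}u)$, which again are not directly $\le N\|u\|_{0}^{2}$. The decomposition $2\mathfrak{a}^{\lambda\mu}=\sigma^{\lambda r}\sigma^{\mu r}+\mathfrak{b}^{\lambda\rho}\mathfrak{b}^{\mu\rho}$ is used here too, so that such factors carry a $\sigma^{\mu r}$ or $\mathfrak{b}^{\mu\rho}$ and can be absorbed, via Young's inequality, into the nonpositive principal term $-\sum_{r}\|\sigma^{\lambda r}\delta^{h}_{\lambda}u\|_{0}^{2}-\sum_{\rho}\|\mathfrak{b}^{\lambda\rho}\delta^{h}_{\lambda}u\|_{0}^{2}$. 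You hint at this for the $\sigma$ cross terms but should make the same mechanism explicit for the commutators.
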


We are now able to give an estimate for solutions to the space-time scheme that is independent of $h$. Recall that for integer $l \geq 0$, we define the norm 
\begin{equation*}
\left\llbracket \phi \right\rrbracket_{l}^{2} := \mexp \sum_{i=0}^{n} \tau \left\| \phi_{i} \right\|_{l}^{2}
\end{equation*}
and let $\mathbf{W}^{l}_{2}(\tau)$ be the space of $W^{l}_{2}$-valued $\mathcal{F}_{i}$-measurable processes $\phi$ such that $\left\llbracket \phi \right\rrbracket_{l}^{2} < \infty$.

\begin{thm}\label{thm: estimate for space-time scheme}
If Assumptions \ref{asm: on initial conditions and free terms}, \ref{asm: parabolicty condition for difference operators}, and \ref{asm: coefficients of the difference operators} hold, then for each nonzero $h$ equation \eqref{eqn: space-time scheme} admits a unique $W^{l}_{2}$-valued $\mathcal{F}_{i}$-measurable solution $v^{h}$. Moreover, $v^{h}$ satisfies 
\begin{equation}\label{eqn: estimate for space-time scheme}
\mexp \max_{i \leq n} \left\| v^{h}_{i} \right\|_{l}^{2} \leq N \mathcal{K}_{l}^{2}
\end{equation}
for a constant $N = N (d, d_{1}, d_{2}, l,T, \hat{K}_{0}, \dots, \hat{K}_{l+2}, \Lambda)$. If, in addition, $\mathfrak{p}^{\lambda} = \mathfrak{q}^{\lambda} = 0$ for $\lambda \in \Lambda_{0}$, then \eqref{eqn: estimate for space-time scheme} holds for all nonzero $h$.
\end{thm}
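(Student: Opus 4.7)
The plan is to proceed along the same lines as the proof of Theorem \ref{thm: solvability of the time scheme with estimate}, with Lemma \ref{lem: Q-estimate} taking the place of Lemma \ref{lem: estimate from Krylov and Rozovskii (1982)}. First I would assume that a sufficiently regular $W^{l+2}_2$-valued solution $v^h$ exists and drive the a priori estimate \eqref{eqn: estimate for space-time scheme} with a constant independent of $h$ and $\tau$; then I would construct the solution via vanishing viscosity or a Galerkin-type approximation.

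For the a priori bound, I would apply the identity $a^2 - b^2 = 2a(a-b) - |a-b|^2$ to $\|D^\gamma v^h_i\|_0^2 - \|D^\gamma v^h_{i-1}\|_0^2$ and substitute the scheme \eqref{eqn: space-time scheme}. Because the drift part is implicit and the stochastic part explicit, the square of the increment splits into a drift-squared term $-\tau^2 \|D^\gamma(L^h_i v^h_i + f_i)\|_0^2$ (which will be dropped since it is nonpositive) and a noise-squared term $\|\sum_\rho D^\gamma(M^{h,\rho}_{i-1} v^h_{i-1} + g^\rho_{i-1})\xi^\rho_i\|_0^2$. Summing over $i$ up to $j$ and over $|\gamma| \leq l$, I would obtain an inequality of the form
\begin{equation*}
\|v^h_j\|_l^2 \leq \|v_0\|_l^2 + H_j + I_j + J_j,
\end{equation*}
with the same $H_j$, $I_j$, $J_j$ terms as in the proof of Theorem \ref{thm: solvability of the time scheme with estimate}, but with $L^h$, $M^{h,\rho}$ replacing $\mathcal{L}$, $\mathcal{M}^\rho$. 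As before, I would use It\^o's formula on $\xi^\pi_{i+1}\xi^\rho_{i+1}$ to split $J_j = J^{(1)}_j + J^{(2)}_j$, where $J^{(2)}_j$ is a martingale and $J^{(1)}_j$ is a deterministic remainder bounded using Lemma \ref{lem: Q-estimate}. Combining the $H_j + J^{(1)}_j$ contributions via Lemma \ref{lem: Q-estimate} gives the bound
\begin{equation*}
H_j + J^{(1)}_j \leq N\tau \|v_0\|_{l+1}^2 + N\tau \sum_{i=1}^j \left(\|v^h_i\|_l^2 + \|f_i\|_l^2 + \|g_{i-1}\|_l^2 \right).
\end{equation*}

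Taking expectations kills $I_j$ and $J^{(2)}_j$, and a discrete Gr\"onwall lemma yields $\max_i \mexp\|v^h_i\|_l^2 \leq N \mathcal{K}_l^2$. To promote this to an expectation-of-supremum estimate, I would apply the Burkholder--Davis--Gundy inequality to $I_j$ and $J^{(2)}_j$ exactly as in the proof of Theorem \ref{thm: solvability of the time scheme with estimate}, using Young's inequality and the already-proven bound (with $l+1$ in place of $l$) to absorb the resulting quadratic variation terms, with Assumption \ref{asm: coefficients of the difference operators} furnishing the required regularity $\hat K_0, \dots, \hat K_{l+2}$. This gives \eqref{eqn: estimate for space-time scheme}.

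For existence, since the difference operators $L^h_i$ and $M^{h,\rho}_i$ are bounded on every $W^l_2$ (with operator norm depending on $h$ and $\Lambda$), I would either mimic the vanishing viscosity/Galerkin argument from the proof of Theorem \ref{thm: solvability of the time scheme with estimate} or, more directly, observe that for fixed $h$ the operator $(I - \tau L^h_i)$ is invertible on $W^l_2$ for sufficiently small $\tau$ by a Neumann-series/coercivity argument (the required coercivity coming from Lemma \ref{lem: Q-estimate} with $\gamma = 0$), so the scheme can be solved iteratively in $W^l_2$. Uniqueness follows from the a priori estimate. The case $\mathfrak{p}^\lambda = \mathfrak{q}^\lambda = 0$, in which the estimate holds for all nonzero $h$, follows because in this case $L^h_i$ and $M^{h,\rho}_i$ involve only symmetric differences $\delta^h_\lambda$, which are invariant under $h \mapsto -h$, so the whole argument applies unchanged for $h < 0$. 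The main technical obstacle is the careful bookkeeping in the BDG step to ensure that the constant $N$ depends only on the parameters claimed and, in particular, is independent of $h$ and $\tau$; this is handled by reducing every stochastic term to one that can be controlled by the already-established $\max_i \mexp\|v^h_i\|_{l+1}^2$ bound.
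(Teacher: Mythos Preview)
Your proposal is correct and follows essentially the same route as the paper: the paper also reduces the a priori estimate to the proof of Theorem \ref{thm: solvability of the time scheme with estimate} with Lemma \ref{lem: Q-estimate} replacing Lemma \ref{lem: estimate from Krylov and Rozovskii (1982)}, obtains the same $H_j, I_j, J_j$ decomposition, splits $J_j$ via It\^o's formula, applies discrete Gronwall, and then upgrades to an expectation-of-supremum bound via Burkholder--Davis--Gundy exactly as you describe. For existence the paper takes your second option (direct invertibility of $I - \tau L^{h}_{i}$ on $L^{2}$, citing the argument behind Theorem \ref{thm: existence of l-2 valued solution to space-time scheme}) rather than vanishing viscosity, and notes that the rough estimate with an $h$-dependent constant already shows the solution is $W^{l}_{2}$-valued; your reasoning for the $\mathfrak{p}^{\lambda}=\mathfrak{q}^{\lambda}=0$ case matches the paper's.
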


\begin{proof}
That \eqref{eqn: space-time scheme} admits a unique $L^{2}$-valued solution follows immediately from the considerations in the proof of Theorem \ref{thm: existence of l-2 valued solution to space-time scheme}. In particular \eqref{eqn: estimate for space-time scheme} can be achieved easily for a constant $N$ depending on $h$, so we see that the solution $v^{h}$ is $W^{l}_{2}$-valued and $\mathcal{F}_{i}$-measurable. To achieve \eqref{eqn: estimate for space-time scheme} for a constant $N$ independent of $h$ (and $\tau$) follows almost immediately from the derivation of the estimate \eqref{eqn: estimate for the time scheme} in the proof of Theorem \ref{thm: solvability of the time scheme with estimate}, using Lemma \ref{lem: Q-estimate} in place of Lemma \ref{lem: estimate from Krylov and Rozovskii (1982)}.

We obtain
\begin{equation}
\label{eqn: intermediate equation with H, I, J}
\left\| v^{h}_{j} \right\|_{l}^{2} \leq \left\| v_{0}\right\|_{l}^{2} + H_{j} + I_{j} + J_{j},
\end{equation}
in the same manner as \eqref{eqn: continuous intermediate equation with H, I, J}, with
\begin{equation*}
H_{j} = 2 \sum_{i=1}^{j} \left(D^{\gamma}v^{h}_{i}, D^{\gamma}\left(L^{h}_{i} v^{h}_{i} + f_{i} \right)\right)\tau,
\end{equation*}
\begin{equation*}
I_{j} = 2 \sum_{i=1}^{j} \sum_{\rho=1}^{d_{1}} \left(D^{\gamma}v^{h}_{i-1}, D^{\gamma}\left(M^{h,\rho}_{i-1} v^{h}_{i-1} + g^{\rho}_{i-1} \right)\right)\xi^{\rho}_{i},
\end{equation*}
and 
\begin{equation*}
J_{j} = \sum_{i=1}^{j} \left\| \sum_{\rho =1}^{d_{1}} D^{\gamma} \left(M^{h,\rho}_{i-1} v^{h}_{i-1} + g^{\rho}_{i-1}\right) \xi^{\rho}_{i} \right\|_{0}^{2}.
\end{equation*}
Then by an application of It{\^o}'s formula, we rewrite $J_{j} = J^{(1)}_{j} + J^{(2)}_{j}$ using
\begin{equation*}
J^{(1)}_{j} = \sum_{i=1}^{j} \left\| \sum_{\rho=1}^{d_{1}} D^{\gamma}\left( M^{h,\rho}_{i-1} v^{h}_{i-1} + g^{\rho}_{i-1} \right)\right\|_{0}^{2} \tau
\end{equation*}
and
\begin{equation*}
J^{(2)}_{j} = \int_{0}^{j\tau} \sum_{\pi,\rho = 1}^{d_{1}} \left(D^{\gamma}\left(M^{h,\pi}_{\kappa(s)} v^{h}_{\kappa(s)} + g^{\pi}_{\kappa(s)}\right), D^{\gamma}\left(M^{h,\rho}_{\kappa(s)} v^{h}_{\kappa(s)} + g^{\rho}_{\kappa(s)}\right)\right) \mathrm{d} Y^{\pi \rho}(s)
\end{equation*}
where $Y^{\pi\rho}(t)$, for $t \in [0,T]$, is defined in the proof of Theorem \eqref{thm: solvability of the time scheme with estimate}.

Now observe that, by Lemma \ref{lem: Q-estimate}, for each $i \in \{0, \dots, n\}$, we have
\begin{align*}
H_{j} + J^{(1)}_{j} 
	&\leq \tau \sum_{\lambda \in \Lambda} \left\| \delta_{h,\lambda} v_{0}\right\|_{l}^{2} + N \tau \sum_{i=1}^{j} \left( Q^{\gamma}_{i} (v^{h}_{i}) + \left(D^{\gamma} v^{h}_{i}, D^{\gamma} f_{i}\right) + \left\|D^{\gamma} g_{i-1}\right\|_{0}^{2}\right) \\
	&\leq N \tau \left\| v_{0}\right\|_{l+1}^{2} + N \tau \sum_{i=1}^{j} \left(\left\|v^{h}_{i}\right\|_{l}^{2} +\left\| f_{i} \right\|_{l}^{2} + \left\| g_{i-1}\right\|_{l}^{2}\right)
\end{align*}
for $h > 0$ where $N = N (l,d,d_{2},\hat{K}_{0}, \dots, \hat{K}_{l+1}, \Lambda)$. Again, here the initial condition $v_{0}$ enters, estimated in the $W^{l+1}_{2}$-norm, due to the displacement caused by the discretization in time when we consider the quadratic form $Q^{\gamma}$ from Lemma \ref{lem: Q-estimate}. If, in addition, $\mathfrak{p}^{\lambda} = \mathfrak{q}^{\lambda} = 0$ for $\lambda \in \Lambda_{0}$, then this last calculation holds for all nonzero $h$. Thus, inequality \eqref{eqn: intermediate equation with H, I, J} becomes
\begin{equation}
\label{eqn: intermediate equation after applying Q bound}
\left\| v^{h}_{j} \right\|_{l}^{2} \leq N \tau \left\|v_{0}\right\|_{l+1}^{2} + N \tau \sum_{i=1}^{j}\left( \left\|v^{h}_{i}\right\|_{l}^{2} + \left\| f_{i} \right\|_{l}^{2} + \left\| g_{i-1}\right\|_{l}^{2}\right) + I_{j} + J^{(2)}_{j}.
\end{equation}
Since $ \mexp I_{j} = 0$ and $\mexp J^{(2)}_{j} = 0$, taking the expectation of \eqref{eqn: intermediate equation after applying Q bound} and taking the sum of $f$ and $g$ over $i \in \{0, \dots, n\}$, we have that
\begin{equation}
\label{eqn: intermediate equation after taking the expectation and maximum}
\mexp \left\| v^{h}_{j} \right\|_{l}^{2}  \leq N \left(\tau \mexp \left\|v_{0}\right\|_{l+1}^{2} +\left\llbracket f \right\rrbracket_{l}^{2} + \left\llbracket g \right\rrbracket_{l}^{2}\right) + N \tau \mexp \sum_{i=1}^{j} \left\| v^{h}_{i} \right\|_{l}^{2}
\end{equation}
for each $j \in \{1, \dots, n\}$. Applying a discrete Gronwall lemma to \eqref{eqn: intermediate equation after taking the expectation and maximum} we have
$$ \mexp \left\| v^{h}_{j}\right\|_{l}^{2} \leq N \left( \tau \mexp \left\| v_{0}\right\|_{l+1}^{2} + \left\llbracket f \right\rrbracket_{l}^{2} + \left\llbracket g \right\rrbracket_{l}^{2}\right) \left(1 - N \tau\right)^{-j}$$
and thus
\begin{equation}
\label{eqn: intermediate bound on max E v after applying discrete Gronwall lemma}
\max_{i \leq n} \mexp \left\| v^{h}_{i}\right\|_{l}^{2} \leq N \left( \tau \mexp \left\| v_{0} \right\|_{l+1}^{2} +  \left\llbracket f \right\rrbracket_{l}^{2} +  \left\llbracket g \right\rrbracket_{l}^{2} \right)
\end{equation}
for a constant $N = N (d,d_{1},d_{2},l,T,\hat{K}_{0}, \dots, \hat{K}_{l+1}, \Lambda)$. In particular, we can use \eqref{eqn: intermediate bound on max E v after applying discrete Gronwall lemma} to eliminate the last term on the right-hand side of \eqref{eqn: intermediate equation after taking the expectation and maximum} by bounding it with terms already appearing on the right-hand side \eqref{eqn: intermediate equation after taking the expectation and maximum}. 

The terms $I$ and $J^{(2)}$ are estimated as in the proof of Theorem \ref{thm: solvability of the time scheme with estimate} with $M^{h,\rho}$ in place of $\mathcal{M}^{\rho}$, using the Burkholder--Davis--Gundy inequality. In particular, we have 
\begin{equation*}
\mexp \max_{i\leq n} \left| I_{i} \right| + \mexp \max_{i \leq n} \left| J^{(2)}_{i} \right| \leq N \left( \tau \mexp \left\| v_{0} \right\|_{l+2} + \left\llbracket f \right\rrbracket_{l+1}^{2} + \left\llbracket g \right\rrbracket_{l+1}^{2} \right)
\end{equation*}
for a constant $N = N(d,d_{1},d_{2},l,T,\hat{K}_{0}, \dots, \hat{K}_{l+2},\Lambda)$. Thus, returning to \eqref{eqn: intermediate equation after applying Q bound} and taking the maximum followed by the expectation we have that 
\begin{equation*}
\mexp \max_{i \leq n} \left\| v^{h}_{i} \right\|_{l}^{2} \leq N \left( \tau \mexp \left\| v_{0} \right\|_{l+2}^{2} + \left\llbracket f \right\rrbracket_{l+1}^{2} + \left\llbracket g \right\rrbracket_{l+1}^{2}\right) = N \mathcal{K}_{l}^{2} < \infty,
\end{equation*}
for a constant $N = N (d,d_{1}, d_{2}, l, T, \hat{K}_{0}, \dots, \hat{K}_{l+2}, \Lambda)$. 
\end{proof}

For the convenience of the reader we record the following lemma, found in \cite{Gyongy:2011}.

\begin{lem}
\label{lem: h derivatives of differences}
Let $\phi \in W_{2}^{p+1}$ and $\psi \in W_{2}^{p+2}$ for an integer $p\geq 0$ and let $\lambda,\mu \in \Lambda_{0}$. Set 
$$ \partial_{\lambda} \phi := \lambda^{j} D_{j} \phi \quad \text{ and } \quad \partial_{\lambda\mu} := \partial_{\lambda}\partial_{\mu}.$$ Then we have
\begin{equation}
\label{eqn: pth derivative in h of a single difference}
\frac{\partial^{p}}{(\partial h)^{p}} \delta_{h,\lambda}\phi (x) = \int_{0}^{1} \theta^{p}\partial_{\lambda}^{p+1} \phi(x+h\theta\lambda) \,\mathrm{d}\theta,
\end{equation}
\begin{equation}\label{eqn: pth derivative in h of a single symmetric difference}
\frac{\partial^{p}}{(\partial h)^{p}} \delta_{\lambda} \phi(x) = \frac{1}{2} \int_{-1}^{1} \theta^{p} \partial^{p+1}_{\lambda} \phi(x+h\theta\lambda) \,\mathrm{d}\theta
\end{equation}
and 
\begin{equation}
\label{eqn: pth derivative in h of a double symmetric difference}
\frac{\partial^{p}}{(\partial h)^{p}} \delta_{\lambda}\delta_{\mu}\psi (x) = \frac{1}{4} \int_{-1}^{1} \int_{-1}^{1} \left(\theta_{1}\partial_{\lambda} - \theta_{2}\partial_{\mu})^{p} \partial_{\lambda\mu} \psi(x + h(\theta_{1}\lambda - \theta_{2}\mu)\right) \,\mathrm{d}\theta_{1}\,\mathrm{d}\theta_{2}
\end{equation}
for almost all $x \in \mathbf{R}^{d}$ for each $h \in \mathbf{R}$. Thus
\begin{equation}\label{eqn: pth derivative of difference of phi at h=0}
\left. \frac{\partial^{p}}{(\partial h)^{p}} \delta_{h,\lambda}\phi \right|_{h=0} = \frac{1}{p+1}\partial^{p+1}_{\lambda} \phi, \quad \left. \frac{\partial^{p}}{(\partial h)^{p}} \delta_{\lambda} \phi \right|_{h=0} = \frac{B_{p}}{p+1}\partial^{p+1}_{\lambda} \phi, 
\end{equation}
and
\begin{equation}\label{eqn: pth derivative of double symmetric difference of phi at h=0}
\left. \frac{\partial^{p}}{(\partial h)^{p}} \delta_{\lambda}\delta_{\mu} \psi \right|_{h=0} = \sum_{r=0}^{p} A_{p,r} \partial^{r+1}_{\lambda} \partial^{p-r+1}_{\mu} \psi,
\end{equation}
where
\begin{equation}\label{eqn: constants B and A}
B_{p} := \begin{cases}0 &\text{if $p$ is odd} \\ 1 & \text{if $p$ is even}\end{cases}, \quad A_{p,r} := \begin{cases} 0 & \text{if $p$ or $r$ is odd} \\ \frac{p!}{(r+1)! (p-r+1)!} & \text{if $p$ and $r$ are even} \end{cases}.
\end{equation}
Furthermore, for integer $l \geq 0$, $\phi \in W_{2}^{p+2+l}$, and $\psi \in W_{2}^{p+3+l}$ one has
\begin{equation}\label{eqn: Taylor expansion error for single difference}
\left\| \delta_{h,\lambda}\phi - \sum_{j=0}^{p}\frac{|h|^{j}}{(j+1)!} \partial_{\lambda}^{j+1} \phi \right\|_{l} \leq \frac{|h|^{p+1}}{(p+2)!} \left\| \partial_{\lambda}^{p+2} \phi \right\|_{l},
\end{equation}
\begin{equation}\label{eqn: Taylor expansion error for symmetric difference}
\left\| \delta_{\lambda}\phi - \sum_{j=0}^{p} \frac{h^{j}}{(j+1)!} B_{j} \partial^{j+1}_{\lambda} \phi \right\|_{l} \leq \frac{|h|^{p+1}}{(p+2)!} \left\| \partial^{p+2}_{\lambda} \phi \right\|_{l},
\end{equation}
and
\begin{equation}\label{eqn: Taylor expansion error for double symmetric difference}
\left\| \delta_{\lambda} \delta_{\mu} \psi - \sum_{j=0}^{p} h^{j} \sum_{r=0}^{j} A_{j,r} \partial^{r+1}_{\lambda} \partial^{j-r+1}_{\mu} \psi \right\|_{l} \leq N |h|^{p+1} \left\| \psi\right\|_{l+p+3},
\end{equation}
where $N = N (|\lambda|, |\mu|, d, p)$.
\end{lem}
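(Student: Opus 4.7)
\medskip

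\noindent\textbf{Proof plan.} The plan is to derive the three integral representations \eqref{eqn: pth derivative in h of a single difference}--\eqref{eqn: pth derivative in h of a double symmetric difference} directly from the fundamental theorem of calculus and then read off both the boundary values at $h=0$ and the Taylor remainder estimates from those formulas. For the single forward difference, substituting $t = h\theta$ in $\phi(x+h\lambda)-\phi(x) = \int_0^h \partial_\lambda \phi(x+t\lambda)\,\mathrm{d}t$ gives
\begin{equation*}
\delta_{h,\lambda}\phi(x) = \int_0^1 \partial_\lambda \phi(x+h\theta\lambda)\,\mathrm{d}\theta,
\end{equation*}
and differentiating $p$ times in $h$ under the integral sign, legitimate since $\phi \in W_{2}^{p+1}$, yields \eqref{eqn: pth derivative in h of a single difference}. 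Averaging this representation with the one obtained by replacing $h$ with $-h$ and making the change of variable $\theta \mapsto -\theta$ gives $\delta_{\lambda}\phi(x) = \tfrac{1}{2}\int_{-1}^{1}\partial_{\lambda}\phi(x+h\theta\lambda)\,\mathrm{d}\theta$, from which \eqref{eqn: pth derivative in h of a single symmetric difference} follows by the same differentiation. Applying this representation twice (or equivalently expanding the $2\times 2$ linear combination of shifts that defines $\delta_\lambda \delta_\mu$ via the FTC in each direction separately) produces
\begin{equation*}
\delta_{\lambda}\delta_{\mu}\psi(x) = \tfrac{1}{4}\int_{-1}^{1}\!\int_{-1}^{1} \partial_{\lambda}\partial_{\mu}\psi\bigl(x+h(\theta_{1}\lambda+\theta_{2}\mu)\bigr)\,\mathrm{d}\theta_{1}\,\mathrm{d}\theta_{2},
\end{equation*}
which is equivalent to the claimed integrand after the change $\theta_{2}\mapsto -\theta_{2}$; differentiating $p$ times under the integral and applying the chain rule produces the factor $(\theta_{1}\partial_{\lambda}-\theta_{2}\partial_{\mu})^{p}$ appearing in \eqref{eqn: pth derivative in h of a double symmetric difference}.

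Next I would evaluate the integral representations at $h=0$ to extract \eqref{eqn: pth derivative of difference of phi at h=0}--\eqref{eqn: pth derivative of double symmetric difference of phi at h=0}. The one-dimensional identities $\int_{0}^{1}\theta^{p}\,\mathrm{d}\theta = 1/(p+1)$ and $\int_{-1}^{1}\theta^{p}\,\mathrm{d}\theta$ equal to $2/(p+1)$ for $p$ even and to zero for $p$ odd give the constants $1/(p+1)$ and $B_{p}/(p+1)$ in \eqref{eqn: pth derivative of difference of phi at h=0} immediately. For \eqref{eqn: pth derivative of double symmetric difference of phi at h=0} I would expand $(\theta_{1}\partial_{\lambda}-\theta_{2}\partial_{\mu})^{p}$ by the binomial theorem, pull the two independent $\theta_{i}$-integrals out term by term, and note that a term $\binom{p}{r}(\pm 1)^{p-r}\theta_{1}^{r}\theta_{2}^{p-r}\partial_{\lambda}^{r+1}\partial_{\mu}^{p-r+1}\psi$ contributes only when both $r$ and $p-r$ are even, in which case the two integrals contribute $2/(r+1)$ and $2/(p-r+1)$; collecting gives the coefficient $\binom{p}{r}\cdot\tfrac{1}{(r+1)(p-r+1)} = \tfrac{p!}{(r+1)!(p-r+1)!}$ exactly as in \eqref{eqn: constants B and A}.

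Finally, for the Taylor bounds \eqref{eqn: Taylor expansion error for single difference}--\eqref{eqn: Taylor expansion error for double symmetric difference} I would apply Taylor's theorem with integral remainder to the integrand $\partial_{\lambda}\phi(x+h\theta\lambda)$ (resp.\ $\partial_{\lambda}\partial_{\mu}\psi$) about the point $x$, interchange the resulting order of integration, and use the translation invariance of the Sobolev norm $\|\cdot\|_{l}$ to pull $\|\partial_{\lambda}^{p+2}\phi\|_{l}$ outside. The remaining factor is a concrete Beta integral that reduces to $1/(p+2)!$, which delivers \eqref{eqn: Taylor expansion error for single difference}; a sign-of-$\theta$ analysis gives \eqref{eqn: Taylor expansion error for symmetric difference}, and for the two-dimensional case the expansion of $(\theta_{1}\partial_{\lambda}-\theta_{2}\partial_{\mu})^{p+1}$ produces mixed derivatives whose $W_2^l$-norms are bounded by $\|\psi\|_{l+p+3}$ up to a combinatorial constant depending only on $|\lambda|,|\mu|,d,p$. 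The one subtle point in the whole argument is the bookkeeping that verifies the precise coefficients $A_{p,r}$ in \eqref{eqn: pth derivative of double symmetric difference of phi at h=0}; everything else is a routine application of the fundamental theorem of calculus combined with differentiation under the integral sign.
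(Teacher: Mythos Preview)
The paper does not actually prove this lemma; it is merely recorded ``for the convenience of the reader'' with a citation to \cite{Gyongy:2011}, so there is no proof in the paper to compare against. Your argument is correct and is the standard one: the integral representations follow from the fundamental theorem of calculus plus a linear change of variable, differentiation under the integral sign is justified by the assumed Sobolev regularity, the values at $h=0$ drop out from the moment integrals $\int_{0}^{1}\theta^{p}\,\mathrm{d}\theta$ and $\int_{-1}^{1}\theta^{p}\,\mathrm{d}\theta$ (the latter accounting for the parity constants $B_{p}$ and $A_{p,r}$ via the binomial expansion), and the remainder bounds follow from Taylor's formula with integral remainder combined with Minkowski's integral inequality and translation invariance of $\|\cdot\|_{l}$.
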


For integers $ l \geq 0$ and $r \geq 1$, denote by $W_{h,2}^{l,r}$ the Hilbert space of functions $\phi$ on $\mathbf{R}^{d}$ such that
\begin{equation}
\label{eqn: discrete Sobolev norm}
\left\| \phi \right\|_{l,r,h}^{2} := \sum_{\lambda_{1}, \dots, \lambda_{r} \in \Lambda} \left\| \delta_{h,\lambda_{1}} \times \cdots \times \delta_{h, \lambda_{r}} \phi \right\|_{l}^{2} < \infty
\end{equation}
and set $W_{h,2}^{l,0} := W_{2}^{l}$.

\begin{rmk}
\label{rmk: differences are bounded by derivatives}
Formula \eqref{eqn: pth derivative in h of a single difference} with $p=0$ and Minkowski's integral inequality imply that 
\begin{equation*}
\left\| \delta_{h,\lambda} \phi \right\|_{0} \leq \left\|\partial_{\lambda}\phi \right\|_{0}.
\end{equation*}
By applying this inequality to finite differences of $\phi$ and using induction we can conclude that $W_{2}^{l+r} \subset W_{h,2}^{l,r}$. Further, for any $\phi \in W_{2}^{l+r}$ we have 
\begin{equation*}
\left\| \phi \right\|_{l,r,h} \leq N \left\| \phi \right\|_{l+r},
\end{equation*}
where $N$ depends only on $|\Lambda_{0}|^{2} = \sum_{\lambda \in \Lambda_{0}} |\lambda|^{2}$ and $r$.
\end{rmk}

We now use the preceding observations to obtain estimates in appropriate Sobolev spaces for a system of time discretized equations. Here we use the summation convention with respect to the repeated indices $\lambda,\mu \in \Lambda_{0}$. For $i \in \{0, \dots, n\}$, let
\begin{equation*}
\mathcal{L}^{(0)}_{i} := \mathfrak{a}^{\lambda\mu}_{i} \partial_{\lambda}\partial_{\mu} + \left(\mathfrak{p}^{\lambda}_{i} - \mathfrak{q}^{\lambda}_{i}\right) \partial_{\lambda},
\end{equation*}
\begin{equation*}
\mathcal{M}^{(0)\rho}_{i} := \mathfrak{b}^{\lambda\rho}_{i} \partial_{\lambda},
\end{equation*}
for each $\rho \in \{1,\dots,d_{1}\}$, and for an integer $p\geq 1$ let
\begin{equation*}
\begin{split}
\mathcal{L}^{(p)}_{i} := \sum_{j=0}^{p} A_{p,j} \mathfrak{a}^{\lambda\mu}_{i} \partial^{j+1}_{\lambda} \partial^{p-j+1}_{\mu} + \frac{B_{p}}{p+1} \left(\mathfrak{a}^{\lambda 0}_{i} + \mathfrak{a}^{0\lambda}_{i}\right)\partial^{p+1}_{\lambda} \\ + \frac{1}{p+1}\left(\mathfrak{p}^{\lambda}_{i} + (-1)^{p+1}\mathfrak{q}^{\lambda}_{i}\right) \partial^{p+1}_{\lambda},
\end{split}
\end{equation*}
\begin{equation*}
\mathcal{M}^{(p)\rho}_{i} := \frac{B_{p}}{p+1} \mathfrak{b}^{\lambda \rho}_{i} \partial^{p+1}_{\lambda},
\end{equation*}
\begin{equation*}
\mathcal{O}^{h(p)}_{i} := L^{h}_{i} - \sum_{j=0}^{p} \frac{h^{j}}{j!} \mathcal{L}^{(j)}_{i},
\end{equation*}
and
\begin{equation*}
\mathcal{R}^{h(p)\rho}_{i} := M^{h,\rho}_{i} - \sum_{j=0}^{p} \frac{h^{j}}{j!} \mathcal{M}^{(j)\rho}_{i},
\end{equation*}
where $A_{p,j}$ and $B_{p}$ are defined by \eqref{eqn: constants B and A}. By Assumption \ref{asm: consistency condition}, we have that $\mathcal{L}^{(0)}_{i} = \mathcal{L}_{i}$ and $\mathcal{M}^{(0)\rho}_{i} = \mathcal{M}^{\rho}_{i}$ for all $i \in \{0, \dots, n\}$. For $p \geq 1$, the values of $\mathcal{L}^{(p)}_{i} \phi$ and $\mathcal{M}^{(p)\rho}_{i} \phi$ are obtained by formally taking the $p$th derivatives in $h$ of $L^{h}_{i}\phi$ and $M^{h,\rho}_{i}\phi$, respectively, at $h=0$.

\begin{rmk}
Let $l$ and $p$ be nonnegative integers. For $\phi \in W^{p+2+l}_{2}$ and $\psi \in W^{p+3+l}_{2}$, under Assumptions \ref{asm: coefficients of the differential operators} and \ref{asm: coefficients of the difference operators, less regularity} with $\mathfrak{m} = m$ we have that for $l \leq m$,
\begin{gather}\label{eqn: estimate for O and R}
\left\| \mathcal{O}^{h(p)}_{i}\psi\right\|_{l} \leq N |h|^{p+1} \left\| \psi \right\|_{l+p+3} \intertext{and}
\left\| \mathcal{R}^{h(p)\rho}_{i} \phi\right\|_{l} \leq N |h|^{p+1} \left\| \phi \right\|_{l+p+2}
\end{gather}
for each $\rho \in \{1, \dots, d_{1}\}$ by \eqref{eqn: Taylor expansion error for single difference}--\eqref{eqn: Taylor expansion error for double symmetric difference} for a constant $N = N(p,d,m,\hat{K}_{l}, C_{m}, \Lambda)$.
\end{rmk}

For integer $k \geq 1$, the sequences of random fields $v^{(1)}, \dots, v^{(k)}$ needed in \eqref{eqn: expansion} will be the embedding of random variables taking values in certain Sobolev spaces obtained as solutions to a system of time discretized SPDEs. Namely, as the solution to 
\begin{equation}\label{eqn: system}
\begin{aligned}
\nu^{(p)}_{i} &= \nu^{(p)}_{i-1} + \left( \mathcal{L}_{i} \nu^{(p)}_{i} + \sum_{j=1}^{p} C^{j}_{p} \mathcal{L}^{(j)}_{i} \nu^{(p-j)}_{i} \right) \tau \\ &\qquad \qquad \qquad + \sum_{\rho=1}^{d_{1}}\left(\mathcal{M}^{\rho}_{i-1}\nu^{(p)}_{i-1} + \sum_{j=1}^{p} C^{j}_{p} \mathcal{M}^{(j)\rho}_{i-1}\nu^{(p-j)}_{i-1}\right)\xi^{\rho}_{i},
\end{aligned}
\end{equation}
for $p \in \{1, \dots, k\}$ where $C^{j}_{p} = p(p-1)\dots (p-j+1)/j!$ is the binomial coefficient and $\nu^{(0)}$ is the solution to \eqref{eqn: time scheme} with initial condition $v_{0}$.

\begin{thm}\label{thm: solvability of system with estimate}
If Assumptions \ref{asm: stochastic parabolicity}, \ref{asm: coefficients of the differential operators}, \ref{asm: on initial conditions and free terms}, and \ref{asm: coefficients of the difference operators, less regularity} hold with $\mathfrak{m} = m \geq 3k$, then the system \eqref{eqn: system} admits a unique solution $\nu^{(1)}, \dots, \nu^{(k)}$ for initial condition $\nu^{(1)}_{0} = \dots = \nu^{(k)}_{0} = 0$ such that $\nu^{(p)}$ is $W^{m-3p}_{2}$-valued $\mathcal{F}_{i}$-measurable. Moreover, for each $p \in \{1, \dots, k\}$ if $v^{(p)}$ is a solution, then
\begin{equation}\label{eqn: estimate for solutions to the system}
\mexp \max_{i\leq n} \left\| \nu^{(p)}_{i} \right\|_{m-3p}^{2} \leq N \mathcal{K}_{m}^{2}
\end{equation}
holds for $h >0$ with a constant $N = N(d,d_{1},m,k,T,K_{0},\dots,K_{m+2},C_{m})$. If, in addition, $\mathfrak{p}^{\lambda} = \mathfrak{q}^{\lambda} = 0$ for $\lambda \in \Lambda_{0}$, then \eqref{eqn: estimate for solutions to the system} holds for all nonzero $h$ and $\nu^{(p)} = 0$ for odd $p \leq k$. 
\end{thm}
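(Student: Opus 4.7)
The plan is induction on $p \in \{0, 1, \dots, k\}$. The base case $p=0$ is Theorem \ref{thm: solvability of the time scheme with estimate}, which gives the $W^m_2$-valued solution $\nu^{(0)}$ to \eqref{eqn: time scheme} together with $\mexp\max_{i \le n}\|\nu^{(0)}_i\|^2_m \le N\mathcal{K}^2_m$.

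For the inductive step, suppose $\nu^{(0)}, \dots, \nu^{(p-1)}$ have been constructed with the stated regularity and bounds. I would rewrite \eqref{eqn: system} for $\nu^{(p)}$ as an instance of the time scheme \eqref{eqn: time scheme} with zero initial datum and free terms
$$F_i := \sum_{j=1}^p C^j_p \mathcal{L}^{(j)}_i\nu^{(p-j)}_i, \qquad G^\rho_i := \sum_{j=1}^p C^j_p \mathcal{M}^{(j)\rho}_i \nu^{(p-j)}_i,$$
in place of $f_i$ and $g^\rho_i$, and then apply Theorem \ref{thm: solvability of the time scheme with estimate} with $m - 3p$ in place of $m$ to produce a unique $W^{m-3p}_2$-valued $\mathcal{F}_i$-measurable solution $\nu^{(p)}$ with the required estimate. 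The hypothesis $m \ge 3k$ keeps $m - 3p \ge 0$ at each stage.

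The heart of the argument is to verify that $F, G \in \mathbf{W}^{m-3p+1}_2(\tau)$ with $\llbracket F \rrbracket^2_{m-3p+1} + \llbracket G \rrbracket^2_{m-3p+1} \le N\mathcal{K}^2_m$. This is a careful order count based on the explicit formulas for $\mathcal{L}^{(j)}$ and $\mathcal{M}^{(j)\rho}$: the former is differential of order at most $j+2$ and the latter of order at most $j+1$, with the key simplification that $A_{j,r}=B_j=0$ for odd $j$ reduces $\mathcal{L}^{(j)}$ to just its order $(j+1)$ contribution from $\mathfrak{p}$ and $\mathfrak{q}$, and forces $\mathcal{M}^{(j)\rho}=0$ when $j$ is odd. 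Combining this with the $W^{m-3(p-j)}_2$-regularity of $\nu^{(p-j)}$ furnished by the induction hypothesis and the derivative bounds on $\mathfrak{a}, \mathfrak{b}, \mathfrak{p}, \mathfrak{q}$ from Assumption \ref{asm: coefficients of the difference operators, less regularity}, each summand of $F_i$ and $G^\rho_i$ lies in $W^{m-3p+1}_2$ with norm bounded by $N C_m \|\nu^{(p-j)}_i\|_{m-3(p-j)}$. The bracket estimate then follows from $\tau \sum_{i\le n} \| \cdot \|^2 \le T \max_{i \le n}\| \cdot \|^2$ together with the inductive bound on $\nu^{(p-j)}$.

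The final vanishing assertion for odd $p \le k$ under $\mathfrak{p}^\lambda=\mathfrak{q}^\lambda=0$ proceeds by a parallel induction: in this regime both $\mathcal{L}^{(j)}$ and $\mathcal{M}^{(j)\rho}$ vanish for odd $j$, so in the driving sums for $\nu^{(p)}$ with $p$ odd every index $j \in \{1,\dots,p\}$ is either odd (killing the operator) or even (making $p-j$ odd, so $\nu^{(p-j)} \equiv 0$ by induction). Thus $\nu^{(p)} \equiv 0$ solves \eqref{eqn: system}, and uniqueness from the already-established solvability forces this. I expect the main obstacle to be the order count in the inductive step — in particular, checking that the coefficient regularity afforded by Assumption \ref{asm: coefficients of the difference operators, less regularity} (only $m - 4$ derivatives of $\mathfrak{a}, \mathfrak{b}$ and $m-2$ of $\mathfrak{p}, \mathfrak{q}$) is precisely enough to absorb the order $(j+2)$ operators $\mathcal{L}^{(j)}$ against the regularity of $\nu^{(p-j)}$. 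The accounting is sharp: each induction step loses three derivatives, which is exactly what the hypothesis $m \ge 3k$ supports.
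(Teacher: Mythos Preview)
Your proposal is correct and follows essentially the same approach as the paper: both argue by induction on $p$, recasting \eqref{eqn: system} as an instance of the time scheme \eqref{eqn: time scheme} with free terms $F^{(p)}$ and $G^{(p)}$, verifying via the explicit structure of $\mathcal{L}^{(j)}$ and $\mathcal{M}^{(j)\rho}$ that these lie in $\mathbf{W}^{m-3p+1}_2(\tau)$ with norms controlled by $\mathcal{K}_m$, and then invoking Theorem~\ref{thm: solvability of the time scheme with estimate} with $m-3p$ in place of $m$. Your parity-based argument for the vanishing of $\nu^{(p)}$ at odd $p$ when $\mathfrak{p}^\lambda=\mathfrak{q}^\lambda=0$ also matches the paper's exactly.
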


\begin{proof}
For convenience let 
\begin{equation*}
F^{(p)}_{i} = \sum_{j=1}^{p}C^{j}_{p} \mathcal{L}^{(j)}_{i} \nu^{(p-j)}_{i}
\end{equation*}
and
\begin{equation*}
G^{(p) \rho}_{i} = \sum_{j=1}^{p}C^{j}_{p} \mathcal{M}^{(j)\rho}_{i} \nu^{(p-j)}_{i},
\end{equation*}
where we will write $G^{(p)} = \sum_{\rho=1}^{d_{1}} G^{(p)\rho}$. Observe that for each $p \in \{1, \dots, k\}$ the equation for $\nu^{(p)}$ in \eqref{eqn: system} depends only on $\nu^{(j)}$ for $j < p$ and does not involve any of the unknown processes $\nu^{(j)}$ with indices $j \geq p$. Therefore, we shall prove the solvability of the system and the desired properties on $\nu^{(p)}$ recursively using Theorem \ref{thm: solvability of the time scheme with estimate}.

By Theorem \ref{thm: solvability of the time scheme with estimate}, $\nu^{(0)}$ is $W^{m}_{2}$-valued, $\mathcal{F}_{i}$-measurable, and satisfies \eqref{eqn: estimate for solutions to the system} with $p=0$. Observe that 
$$\left\| \mathcal{L}^{(1)}_{i} \nu^{(0)}_{i} \right\|_{m-2} \leq N \left\| \nu^{(0)}_{i} \right\|_{m}$$ 
for a constant $N = N (m, C_{m})$, recalling the constant $C_{m}$ from Assumption \ref{asm: coefficients of the difference operators, less regularity} in this instance since we only require that $(m-2) \vee 0$ derivatives of the coefficients exist and are bounded, and further that $\mathcal{M}^{(1)}_{i} \nu^{(0)}_{i} = 0$. Therefore, by Theorem \ref{thm: solvability of the time scheme with estimate}, there exists a unique $W^{m-3}_{2}$-valued $\mathcal{F}_{i}$-measurable $v^{(1)}$ satisfying \eqref{eqn: system} with zero initial condition. Moreover, the estimate \eqref{eqn: estimate for solutions to the system} is clearly satisfied in for $p=1$. 

Now we induct on $p$, assuming that for $m \geq 3k \geq 2$ and $p \in \{2, \dots, k\}$ we have unique solutions $\nu^{(1)}$, \dots, $\nu^{(p-1)}$ satisfying the desired properties. Observe that
\begin{equation*}
\left\| \mathcal{L}^{(1)}_{i} \nu^{(p-1)}_{i} \right\|_{m-3p+1} \leq N \left\| \nu^{(p-1)}_{i} \right\|_{m-3(p-1)} \leq N \left\| \nu^{(p-1)}_{i} \right\|_{m-2(p-1)}
\end{equation*}
for a constant $N = N(m,C_{m})$, by Assumption \ref{asm: coefficients of the difference operators, less regularity}, and that $\mathcal{M}^{(1)\rho}_{i} \nu^{(p-1)}_{i} = 0$. Further for $j \geq 2$, if $j$ is even, then
\begin{equation*}
\left\| \mathcal{L}^{(j)}_{i} \nu^{(p-j)}_{i} \right\|_{m-3p+1} \leq N \left\| \mathcal{L}^{(j)}_{i} \nu^{(p-j)}_{i} \right\|_{m-3p+2(j-1)} \leq N \left\| \nu^{(p-j)}_{i} \right\|_{m-3(p-j)}
\end{equation*}
and 
\begin{align*}
\sum_{\rho=1}^{d_{1}} \left\| \mathcal{M}^{(j)\rho}_{i} \nu^{(p-j)}_{i} \right\|_{m-3p+1}^{2} &\leq N \sum_{\rho=1}^{d_{1}} \left\| \mathcal{M}^{(j)\rho}_{i} \nu^{(p-j)}_{i} \right\|_{m-3p+(2j-1)}^{2} \\ &\leq N \left\| \nu^{(p-j)}_{i} \right\|_{m-3(p-j)}^{2}
\end{align*}
or if $j$ is odd, then
\begin{equation*}
\left\| \mathcal{L}^{(j)}_{i} \nu^{(p-j)}_{i} \right\|_{m-3p+1} \leq N \left\| \mathcal{L}^{(j)}_{i} \nu^{(p-j)}_{i} \right\|_{m-3p+2(j-1)} \leq N \left\| \nu^{(p-j)}_{i} \right\|_{m-3(p-j)}
\end{equation*}
and $\mathcal{M}^{(j)\rho}_{i} \nu^{(p-j)}_{i} = 0$, for constants $N = N(m,C_{m})$, by Assumption \ref{asm: coefficients of the difference operators, less regularity}. Therefore, by the induction hypothesis, $F^{(p)}$ is $W^{m-3p+1}_{2}$-valued and $\mathcal{F}_{i}$-measurable, $G^{(p)}$ is $W^{m-3p+1}_{2}$-valued and $\mathcal{F}_{i}$-measurable, and 
\begin{equation}\label{eqn: bound for F-(p) and G-(p)}
\left\llbracket F^{(p)} \right\rrbracket_{m-3p+1}^{2} + \left\llbracket G^{(p)} \right\rrbracket_{m-3p+1}^{2} \leq N \mathcal{K}_{m}^{2}.
\end{equation} 
That is, $F^{(p)} \in \mathbf{W}^{m-3p+1}_{2}(\tau)$ and $G^{(p)} \in \mathbf{W}^{m-3p+1}_{2}(\tau)$. Thus by Theorem \ref{thm: solvability of the time scheme with estimate}, there exists a $W^{m-3p}_{2}$-valued $\mathcal{F}_{i}$-measurable $\nu^{(p)}$ satisfying \eqref{eqn: system} with zero initial condition. Moreover, Theorem \ref{thm: solvability of the time scheme with estimate} yields the estimate 
\begin{equation*}
\mexp \max_{i\leq n} \left\| \nu^{(p)}_{i} \right\|_{m-3p}^{2} \leq N \left( \left\llbracket F^{(p)} \right\rrbracket_{m-3p+1}^{2} + \left\llbracket G^{(p)} \right\rrbracket_{m-3p+1}^{2} \right)
\end{equation*}
for a constant $N = N(d,d_{1},m,k,T,K_{0},\dots,K_{m+1},C_{m})$. Combining this with \eqref{eqn: bound for F-(p) and G-(p)} yields \eqref{eqn: estimate for solutions to the system}. We remark further that the uniqueness of each $\nu^{(p)}$ follows from the uniqueness of the solutions obtained from Theorem \ref{thm: solvability of the time scheme with estimate}.

Note that $\mathcal{M}^{(p)\rho} = 0$ for odd $p \leq k$ by \eqref{eqn: pth derivative of difference of phi at h=0}. Assume, in addition, that $\mathfrak{p}^{\lambda} = \mathfrak{q}^{\lambda} = 0$ for $\lambda \in \Lambda_{0}$. Then also $\mathcal{L}^{(p)} = 0$ for odd $p\leq k$ by \eqref{eqn: pth derivative of difference of phi at h=0} and \eqref{eqn: pth derivative of double symmetric difference of phi at h=0}. Therefore, $F^{(1)}=0$ and $G^{(1)}=0$, which implies $\nu^{(1)} = 0$. Assume that $k \geq 2$ and that for an odd $p \leq k$ we have $\nu^{(j)} = 0$ for all odd $j \leq p$. Then $\mathcal{L}^{(p-j)}\nu^{(j)} = 0$ and $\mathcal{M}^{(p-j)\rho} \nu^{(j)} = 0$ for all $j \in \{1, \dots, p\}$ since either $j$ or $p-j$ is odd. Thus $F^{(p)} = 0$ and $G^{(p)} = 0$. Hence $\nu^{(p)} = 0$ for all odd $p \leq k$. 
\end{proof}

For integers $k \geq 0$ let $\nu^{(1)}, \dots, \nu^{(k)}$ be the solutions to the system \eqref{eqn: system} with zero initial condition coming from Theorem \ref{thm: solvability of system with estimate}. Let 
\begin{equation}\label{eqn: equation for the error function}
\mathfrak{r}^{\tau,h}_{i} := \nu^{h}_{i} - \nu^{(0)}_{i} - \sum_{j=1}^{k} \frac{h^{j}}{j!} \nu^{(j)}_{i}
\end{equation}
for $i \in \{1, \dots, n\}$, where $\nu^{h}$ and $\nu^{(0)}$ are the unique $W^{m}_{2}$-valued solutions to \eqref{eqn: space-time scheme} and \eqref{eqn: time scheme}, respectively, with initial condition $v_{0}$. 

\begin{lem}\label{lem: the error satisfies the equation for the space-time scheme}
Let $\mathfrak{r}^{\tau,h}$ be defined as in \eqref{eqn: equation for the error function}. If Assumptions \ref{asm: stochastic parabolicity}, \ref{asm: coefficients of the differential operators}, \ref{asm: on initial conditions and free terms}, \ref{asm: consistency condition}, \ref{asm: parabolicty condition for difference operators}, \ref{asm: coefficients of the difference operators}, and \ref{asm: coefficients of the difference operators, less regularity} hold with $\mathfrak{m} = m \geq 3k + 4 + l$ for integers $k\geq 0$ and $l \geq 0$, then $\mathfrak{r}^{\tau,h}_{0} = 0$ and $\mathfrak{r}^{\tau,h}$ is $W^{l}_{2}$-valued $\mathcal{F}_{i}$-measurable such that $$\mexp \max_{i \leq n} \left\| \mathfrak{r}^{\tau,h}_{i} \right\|_{l}^{2} < \infty $$ and $\mathfrak{r}^{\tau,h}$ satisfies
\begin{equation}\label{eqn: space-time difference equation for error}
\mathfrak{r}^{\tau,h}_{i} = \mathfrak{r}^{\tau,h}_{i-1} + \left( L^{h}_{i} \mathfrak{r}^{\tau,h}_{i} + \mathfrak{f}^{h}_{i} \right) \tau + \sum_{\rho=1}^{d_{1}} \left(M^{h,\rho}_{i-1} \mathfrak{r}^{\tau,h}_{i-1} + \mathfrak{g}^{h}_{i-1} \right)\xi^{\rho}_{i}
\end{equation}
for $i \in \{1, \dots, n\}$, where 
\begin{equation*}
\mathfrak{f}^{h}_{i} = \sum_{j=0}^{k} \frac{h^{j}}{j!} \mathcal{O}^{h(k-j)}_{i} \nu^{(j)}_{i}
\end{equation*}
and
\begin{equation}
\mathfrak{g}^{h,\rho}_{i-1} = \sum_{j=0}^{k} \frac{h^{j}}{j!} \mathcal{R}^{h(k-j)\rho}_{i-1} \nu^{(j)}_{i-1}.
\end{equation}
Moreover $\mathfrak{f}^{h} \in \mathbf{W}^{l+1}_{2}(\tau)$ and $\mathfrak{g}^{h,\rho} \in \mathbf{W}^{l+1}(\tau)$.
\end{lem}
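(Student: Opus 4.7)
The plan is to verify the four claims in turn by working directly from the defining equations for $\nu^{h}$, $\nu^{(0)}$, and the $\nu^{(j)}$. The initial condition is immediate: by hypothesis $\nu^{h}_{0} = v_{0} = \nu^{(0)}_{0}$, and Theorem \ref{thm: solvability of system with estimate} gives $\nu^{(j)}_{0} = 0$ for each $j \geq 1$, so $\mathfrak{r}^{\tau,h}_{0} = 0$. The $W^{l}_{2}$-valued $\mathcal{F}_{i}$-measurability and the finite-moment bound $\mexp \max_{i \leq n} \| \mathfrak{r}^{\tau,h}_{i} \|_{l}^{2} < \infty$ follow from the triangle inequality applied to the defining expression for $\mathfrak{r}^{\tau,h}$, together with Theorems \ref{thm: solvability of the time scheme with estimate}, \ref{thm: estimate for space-time scheme}, and \ref{thm: solvability of system with estimate}; the condition $m \geq 3k+4+l$ guarantees $m-3j \geq l$ for every $j \leq k$, so each summand is at least $W^{l}_{2}$-valued.

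The substantive step is the derivation of \eqref{eqn: space-time difference equation for error}. I set $V_{i} := \sum_{j=0}^{k} \frac{h^{j}}{j!} \nu^{(j)}_{i}$, so that $\nu^{h}_{i} = V_{i} + \mathfrak{r}^{\tau,h}_{i}$, and compute $\mathfrak{r}^{\tau,h}_{i} - \mathfrak{r}^{\tau,h}_{i-1}$ by subtracting \eqref{eqn: time scheme} and the appropriate weighted combinations of \eqref{eqn: system} from \eqref{eqn: space-time scheme}. The free terms $f_{i}$ and $g^{\rho}_{i-1}$ cancel, and (using $C^{0}_{j} = 1$ to absorb the $\mathcal{L}_{i}\nu^{(j)}_{i} = \mathcal{L}^{(0)}_{i}\nu^{(j)}_{i}$ contributions into the inner sum) the drift coefficient of $\tau$ reduces to
\begin{equation*}
L^{h}_{i} \nu^{h}_{i} - \sum_{j=0}^{k} \frac{h^{j}}{j!} \sum_{r=0}^{j} C^{r}_{j}\, \mathcal{L}^{(r)}_{i} \nu^{(j-r)}_{i}.
\end{equation*}
Subtracting $L^{h}_{i} \mathfrak{r}^{\tau,h}_{i} = L^{h}_{i} \nu^{h}_{i} - L^{h}_{i} V_{i}$ leaves $L^{h}_{i} V_{i}$ minus the triangular double sum. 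Reindexing by $(s,t) = (r, j-r)$ converts it into the truncated Cauchy product
\begin{equation*}
\sum_{\substack{s,t \geq 0 \\ s+t \leq k}} \frac{h^{s+t}}{s!\, t!}\, \mathcal{L}^{(s)}_{i} \nu^{(t)}_{i} = \sum_{t=0}^{k} \frac{h^{t}}{t!} \Bigl( \sum_{s=0}^{k-t} \frac{h^{s}}{s!} \mathcal{L}^{(s)}_{i} \Bigr) \nu^{(t)}_{i} = L^{h}_{i} V_{i} - \sum_{t=0}^{k} \frac{h^{t}}{t!}\, \mathcal{O}^{h(k-t)}_{i} \nu^{(t)}_{i},
\end{equation*}
where the final equality is precisely the definition of $\mathcal{O}^{h(k-t)}_{i}$. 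Hence the drift residual equals exactly $\mathfrak{f}^{h}_{i}$. An analogous (in fact simpler, as there is no $\mathcal{L}^{(0)}_{i}$-type absorption needed) computation for the coefficient of $\xi^{\rho}_{i}$, with $M^{h,\rho}$, $\mathcal{M}^{(r)\rho}$, and $\mathcal{R}^{h(p)\rho}$ in the roles of $L^{h}$, $\mathcal{L}^{(r)}$, and $\mathcal{O}^{h(p)}$, produces the stated $\mathfrak{g}^{h,\rho}_{i-1}$.

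It remains to show $\mathfrak{f}^{h}, \mathfrak{g}^{h,\rho} \in \mathbf{W}^{l+1}_{2}(\tau)$. For this I invoke the bounds $\| \mathcal{O}^{h(k-j)}_{i} \nu^{(j)}_{i} \|_{l+1} \leq N |h|^{k-j+1} \| \nu^{(j)}_{i} \|_{l+(k-j)+4}$ and $\| \mathcal{R}^{h(k-j)\rho}_{i} \nu^{(j)}_{i} \|_{l+1} \leq N |h|^{k-j+1} \| \nu^{(j)}_{i} \|_{l+(k-j)+3}$ from the remark containing \eqref{eqn: estimate for O and R}, together with the $W^{m-3j}_{2}$-estimate for $\nu^{(j)}$ supplied by Theorem \ref{thm: solvability of system with estimate}. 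The stricter Sobolev-index constraint $m-3j \geq l+(k-j)+4$ must hold for every $j \in \{0,\dots,k\}$; it is tightest at $j=k$, where it becomes $m \geq 3k+4+l$, exactly the standing hypothesis. The main obstacle is bookkeeping the second step, where the combinatorial identification of the triangular sum with $L^{h}_{i} V_{i} - \mathfrak{f}^{h}_{i}$ must be carried out without error, but since $\mathcal{L}^{(r)}$ and $\mathcal{M}^{(r)\rho}$ were defined precisely as the formal Taylor coefficients of $L^{h}$ and $M^{h,\rho}$ in $h$, the Cauchy product structure aligns automatically.
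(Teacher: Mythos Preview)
Your proof is correct and follows essentially the same route as the paper's: both establish the regularity and measurability of $\mathfrak{r}^{\tau,h}$ by invoking Theorems \ref{thm: solvability of the time scheme with estimate}, \ref{thm: estimate for space-time scheme}, and \ref{thm: solvability of system with estimate}, derive \eqref{eqn: space-time difference equation for error} by subtracting the defining equations and reindexing the triangular double sum into a truncated Cauchy product (the paper writes out the same computation in the form $\sum_{i=0}^{k}\frac{h^{i}}{i!}\sum_{j=0}^{k-i}\frac{h^{j}}{j!}\mathcal{L}^{(j)}\nu^{(i)} = \sum_{i=1}^{k}\sum_{j=1}^{i}\frac{h^{i}}{j!(i-j)!}\mathcal{L}^{(j)}\nu^{(i-j)}$), and conclude the $\mathbf{W}^{l+1}_{2}(\tau)$ membership from the Taylor-remainder bounds \eqref{eqn: estimate for O and R} together with the estimate \eqref{eqn: estimate for solutions to the system}. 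One minor slip: your parenthetical remark that the noise computation is ``simpler'' because no $\mathcal{L}^{(0)}$-type absorption is needed is not right---the system \eqref{eqn: system} has exactly the same structure in the $\mathcal{M}^{\rho}$ terms, so the $\mathcal{M}^{(0)\rho}_{i-1}\nu^{(j)}_{i-1} = \mathcal{M}^{\rho}_{i-1}\nu^{(j)}_{i-1}$ absorption is required there as well; the computation is analogous, not simpler.
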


\begin{proof}
First recall that by Theorem \ref{thm: estimate for space-time scheme}, the solution to the space-time scheme $\nu^{h}$ is $W^{l}_{2}$-valued $\mathcal{F}_{i}$-measurable and satisfies estimate \eqref{eqn: estimate for space-time scheme} owing to Assumptions \ref{asm: on initial conditions and free terms}, \ref{asm: parabolicty condition for difference operators}, and \ref{asm: coefficients of the difference operators} with $m = l$. By Theorem \ref{thm: solvability of the time scheme with estimate}, $\nu^{(0)}$ is $W^{l}_{2}$-valued $\mathcal{F}_{i}$-measurable and satisfies estimate \eqref{eqn: estimate for the time scheme} by Assumptions \ref{asm: stochastic parabolicity}, \ref{asm: coefficients of the differential operators}, and \ref{asm: on initial conditions and free terms} with $m = l$. By Theorem \ref{thm: solvability of system with estimate}, the $\nu^{(j)}$ are $W^{l}_{2}$-valued $\mathcal{F}_{i}$-measurable processes satisfying estimate \eqref{eqn: estimate for solutions to the system} for all $j \in \{1, \dots, k\}$ owing to Assumptions \ref{asm: stochastic parabolicity}, \ref{asm: coefficients of the differential operators}, \ref{asm: on initial conditions and free terms}, and \ref{asm: coefficients of the difference operators, less regularity} with $\mathfrak{m} = m = l$. Thus $\mathfrak{r}^{\tau,h}$ is $W^{l}_{2}$-valued $\mathcal{F}_{i}$-measurable and satisfies
\begin{equation*}
\mexp \max_{i\leq n} \left\| \mathfrak{r}^{\tau,h}_{i} \right\|_{l}^{2} 
< \infty
\end{equation*}
for $\mathfrak{m}= m = l \geq 3k$

One can easily show that $\mathfrak{r}^{\tau,h}$ satisfies \eqref{eqn: space-time difference equation for error} using \eqref{eqn: time scheme}, \eqref{eqn: space-time scheme}, and \eqref{eqn: system} by noting that we can rewrite $\mathfrak{f}^{h}$ and $\mathfrak{g}^{h,\rho}$ as 
\begin{equation*}
\mathfrak{f}^{h} = L^{h} \nu^{(0)} - \mathcal{L}\nu^{(0)} + \sum_{j=1}^{k} \frac{h^{j}}{j!} L^{h} \nu^{(j)} - \sum_{j=1}^{k} \frac{h^{j}}{j!} \mathcal{L} \nu^{(j)} - I
\end{equation*}
and 
\begin{equation*}
\mathfrak{g}^{h,\rho} = M^{h,\rho}\nu^{(0)} - \mathcal{M}^{\rho}\nu^{(0)} + \sum_{j=1}^{k} \frac{h^{j}}{j!} M^{h,\rho}\nu^{(j)} - \sum_{j=1}^{k} \frac{h^{j}}{j!}\mathcal{M}^{\rho} \nu^{(j)} - J
\end{equation*}
for 
\begin{align*}
\sum_{i=0}^{k} \frac{h^{i}}{i!} \sum_{j=0}^{k-i} \frac{h^{j}}{j!} \mathcal{L}^{(j)} \nu^{ (i)} &= \sum_{i=0}^{k-1} \frac{h^{i}}{i!} \sum_{j=1}^{k-i} \frac{h^{j}}{j!} \mathcal{L}^{(j)} \nu^{ (i)}\\
	&= \sum_{i=1}^{k} \sum_{j=0}^{k-i} \frac{h^{i+j}}{i!j!} \mathcal{L}^{(i)} \nu^{ (j)}\\
	&= \sum_{i=1}^{k} \sum_{j=i}^{k} \frac{h^{j}}{i!(j-i)!} \mathcal{L}^{(i)} \nu^{ (j- i)}\\
	&= \sum_{i =1}^{k} \sum_{j=1}^{i} \frac{h^{i}}{j!(i-j)!} \mathcal{L}^{(j)} \nu^{ (i-j)} =: I
\end{align*}
and
\begin{equation*}
\sum_{i=0}^{k} \frac{h^{i}}{i!} \sum_{j=0}^{k-i} \frac{h^{j}}{j!} \mathcal{M}^{(j)\rho} \nu^{ (i)} = \sum_{i=1}^{k}\sum_{j=1}^{i} \frac{h^{i}}{j! (i-j)!} \mathcal{M}^{(j)\rho} \nu^{ (i-j)} =: J
\end{equation*}
where summations over empty sets are taken to be zero.

To prove the last assertion, we note that, by assumption, $m - 3j \geq l + k - j + 4$ for $j \in \{0, 1, \dots, k\}$. Thus by Lemma \ref{lem: h derivatives of differences}, for $j \in \{0, 1, \dots, k\}$, $i \in \{0, \dots, n\}$, and $\omega \in \Omega$,
\begin{equation*}
\left\| \mathcal{O}^{h(k-j)}_{i} \nu^{(j)}_{i} \right\|_{l+1} \leq C \left\| \nu^{(j)}_{i} \right\|_{l+k-j+4} \leq C \left\| \nu^{(j)}_{i} \right\|_{m-3j}
\end{equation*}
and 
\begin{equation*}
\left\| \mathcal{R}^{h(k-j)\rho}_{i} \nu^{(j)}_{i} \right\|_{l+1} \leq C \left\| \nu^{(j)}_{i} \right\|_{l+k-j+3} \leq C \left\| \nu^{(j)}_{i} \right\|_{m-3j}
\end{equation*}
for a constant $C$ independent of the functions and parameters presently under consideration that changes from one instance to the next. Therefore $\mathfrak{f}^{h} \in \mathbf{W}^{l+1}_{2}(\tau)$ and $\mathfrak{g}^{h,\rho} \in \mathbf{W}^{l+1}_{2} (\tau)$.
\end{proof}

In the next section we prove the expansion results.

\section{Proof of Main Results}\label{sec: Proof of Main Results}

We prove a slightly more general result which implies Theorem \ref{thm: generalized expansion} and hence Theorem \ref{thm: expansion}. Here we suppose that $\mathfrak{m} = m$ everywhere.  

\begin{thm}\label{thm: estimate for the error}
Let $\mathfrak{r}^{\tau,h}$ be defined as in \eqref{eqn: equation for the error function}. If Assumption \ref{asm: coefficients of the difference operators} holds with integer $l \geq 0$ and Assumptions \ref{asm: stochastic parabolicity}, \ref{asm: coefficients of the differential operators}, \ref{asm: on initial conditions and free terms}, \ref{asm: consistency condition}, \ref{asm: parabolicty condition for difference operators}, and \ref{asm: coefficients of the difference operators, less regularity} hold with 
\begin{equation}\label{eqn: conditions on m for estimate for the error}
m = 3k + 4 + l
\end{equation}
for integer $k \geq 0$, then, for $h > 0$,
\begin{equation}\label{eqn: estimate for the error}
\mexp \max_{i \leq n} \left\| \mathfrak{r}^{\tau,h}_{i} \right\|_{l}^{2} \leq N \left| h \right|^{2(k+1)}\mathcal{K}_{m}^{2}
\end{equation}
holds with a constant $N = N(d,d_{1},d_{2},m,l, T, K_{0}, \dots, K_{m+2}, \hat{K}_{0}, \dots, \hat{K}_{l+2}, C_{m},\Lambda)$. If, in addition, $\mathfrak{p}^{\lambda} = \mathfrak{q}^{\lambda} = 0$ for $\lambda \in \Lambda_{0}$, then \eqref{eqn: estimate for the error} holds for nonzero $h$ and it suffices to assume $m \geq 3k + 1 +l$ in place of \eqref{eqn: conditions on m for estimate for the error}.
\end{thm}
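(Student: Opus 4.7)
The plan is to recognize that the error $\mathfrak{r}^{\tau,h}$, by Lemma \ref{lem: the error satisfies the equation for the space-time scheme}, solves the space-time scheme \eqref{eqn: space-time difference equation for error} with zero initial condition and modified free terms $\mathfrak{f}^{h}$ and $\mathfrak{g}^{h,\rho}$. Hence I would apply Theorem \ref{thm: estimate for space-time scheme} to this equation (with $l+1$ in place of the relevant Sobolev index so that the free terms only need to be bounded in $\mathbf{W}^{l+1}_2(\tau)$), which yields
\begin{equation*}
\mexp \max_{i \leq n} \left\| \mathfrak{r}^{\tau,h}_{i} \right\|_{l}^{2} \leq N \left( \left\llbracket \mathfrak{f}^{h} \right\rrbracket_{l+1}^{2} + \left\llbracket \mathfrak{g}^{h} \right\rrbracket_{l+1}^{2} \right),
\end{equation*}
where $N$ is independent of $h$ and $\tau$. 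Everything then reduces to showing that the right-hand side is bounded by $N |h|^{2(k+1)} \mathcal{K}_m^2$.

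The second step is to estimate each of $\mathfrak{f}^{h}_i = \sum_{j=0}^{k} (h^{j}/j!)\mathcal{O}^{h(k-j)}_{i} \nu^{(j)}_{i}$ and $\mathfrak{g}^{h,\rho}_{i-1}$ term by term. I would apply the remainder bounds \eqref{eqn: estimate for O and R} to each summand, which give
\begin{equation*}
\left\| \mathcal{O}^{h(k-j)}_{i} \nu^{(j)}_{i}\right\|_{l+1} \leq N |h|^{k-j+1} \left\| \nu^{(j)}_{i} \right\|_{l+k-j+4}
\end{equation*}
and similarly (with index $l+k-j+3$) for $\mathcal{R}^{h(k-j)\rho}$. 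Multiplying by $h^j/j!$ produces a factor $|h|^{k+1}$ uniformly in $j$. To then control $\|\nu^{(j)}_i\|_{l+k-j+4}$ by $\|\nu^{(j)}_i\|_{m-3j}$ I need $l+k-j+4 \leq m-3j$, i.e.\ $m \geq l + k + 4 + 2j$; taking the maximum over $j \in \{0, \dots, k\}$ yields precisely the hypothesis $m \geq 3k+4+l$. With this in place, Theorem \ref{thm: solvability of system with estimate} gives $\llbracket \nu^{(j)} \rrbracket_{m-3j}^2 \leq T\, \mexp \max_{i \leq n} \|\nu^{(j)}_i\|_{m-3j}^2 \leq N \mathcal{K}_m^2$, and summing over $j$ finishes the main estimate \eqref{eqn: estimate for the error}.

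For the improved case in which $\mathfrak{p}^{\lambda} = \mathfrak{q}^{\lambda} = 0$, I would first note that Theorem \ref{thm: solvability of system with estimate} gives $\nu^{(j)} = 0$ for odd $j$, so half of the terms in $\mathfrak{f}^{h}$ and $\mathfrak{g}^{h,\rho}$ drop out automatically. Moreover, the coefficients $B_p$ and $A_{p,r}$ in \eqref{eqn: constants B and A} vanish for odd $p$ (and odd $r$), so when $k-j$ is even (the only case contributing, since $j$ is even and we may take $k$ odd), the next term that would appear in the Taylor expansion of $L^h$ and $M^{h,\rho}$ about $h=0$ vanishes identically; a refined version of the remainder estimate then gives $\|\mathcal{O}^{h(k-j)}_i \psi\|_{l+1} \leq N|h|^{k-j+2} \|\psi\|_{l+k-j+5}$ and analogously for $\mathcal{R}^{h(k-j)\rho}$. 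Coupled with $\nu^{(j)}=0$ for odd $j$, the worst-case index count becomes $m \geq l + 1 + 3k$, which is the stated improvement. The extension to nonzero $h$ of either sign is immediate, since none of the bounds above distinguished the sign of $h$ once the symmetric-difference structure is used.

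The main obstacle is the uniform Sobolev-index accounting across the mixed system: one must simultaneously track the order of differentiation consumed by the operators $\mathcal{O}^{h(k-j)}$ and $\mathcal{R}^{h(k-j)\rho}$ (growing in $k-j$) against the regularity delivered by Theorem \ref{thm: solvability of system with estimate} (decaying in $j$ at rate $3$). The hypothesis $m \geq 3k+4+l$ is tight precisely because these two scales trade off linearly, and extracting the reduced condition $m \geq 3k+1+l$ in the degenerate-drift case requires carefully exploiting the vanishing of odd-order remainders rather than just the vanishing of the odd $\nu^{(j)}$.
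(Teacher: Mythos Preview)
Your argument for the main estimate is correct and essentially identical to the paper's: apply Lemma~\ref{lem: the error satisfies the equation for the space-time scheme} to recognize $\mathfrak{r}^{\tau,h}$ as a solution of the space-time scheme with zero initial data and free terms $\mathfrak{f}^h,\mathfrak{g}^{h,\rho}$, invoke Theorem~\ref{thm: estimate for space-time scheme}, and then use the Taylor remainder bounds~\eqref{eqn: estimate for O and R} together with Theorem~\ref{thm: solvability of system with estimate}. Your index-counting $l+k-j+4\le m-3j$ leading to $m\ge 3k+4+l$ is exactly the paper's.

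Your treatment of the improved case $\mathfrak{p}^\lambda=\mathfrak{q}^\lambda=0$, however, contains a parity slip and is more complicated than necessary. You write ``when $k-j$ is even (the only case contributing, since $j$ is even and we may take $k$ odd)'', but with $k$ odd and $j$ even one has $k-j$ \emph{odd}, not even. Consequently your refined remainder argument does not apply: for $k-j$ odd the vanishing term is $\mathcal{L}^{(k-j)}$ itself, which only gives $\mathcal{O}^{h(k-j)}=\mathcal{O}^{h(k-j-1)}$, not the extra power of $|h|$ you claim from $\mathcal{O}^{h(k-j)}=\mathcal{O}^{h(k-j+1)}$. The paper's route is much simpler: since $\nu^{(k)}=0$ when $k$ is odd, the summand $j=k$ in $\mathfrak{f}^h$ and $\mathfrak{g}^{h,\rho}$ vanishes outright, and one only needs the remainder bounds for $j\le k-1$; the reduced regularity threshold then follows directly from the same index inequality. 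No refined Taylor estimate is required.
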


\begin{proof}
By Lemma \ref{lem: the error satisfies the equation for the space-time scheme}, we have that $\mathfrak{f}^{h} \in \mathbf{W}^{l+1}_{2}(\tau)$ and $\mathfrak{g}^{h,\rho} \in \mathbf{W}^{l+1}_{2}(\tau)$. Thus, by Lemma \ref{lem: the error satisfies the equation for the space-time scheme} and Theorem \ref{thm: estimate for space-time scheme},
\begin{equation}\label{eqn: intermediate estimate for the error}
\mexp \max_{i \leq n} \left\| \mathfrak{r}^{\tau,h}_{i} \right\|_{l}^{2} \leq N \left( \left\llbracket \mathfrak{f}^{h} \right\rrbracket_{l+1}^{2} + \left\llbracket \mathfrak{g}^{h}\right\rrbracket_{l+1}^{2} \right)
\end{equation}
for a constant $N = N(d,d_{1},d_{2},l,T, \hat{K}_{0}, \dots, \hat{K}_{m+2})$. Then by \eqref{eqn: conditions on m for estimate for the error} for $j \in \{0, \dots, k\}$
and by Remark \ref{eqn: estimate for O and R}, we have 
\begin{gather}\label{eqn: intermediate estimate on O and R used for estimate for the error}
\left\| \mathcal{O}^{h(k-j)}_{i} \nu^{(j)}_{i} \right\|_{l+1} \leq N \left| h\right|^{k-j+1} \left\| \nu^{(j)}_{i} \right\|_{l+k-j+4} \leq N \left| h\right|^{k-j+1} \left\| \nu^{(j)}_{i} \right\|_{m-3j}, \intertext{and}
\left\| \mathcal{R}^{h(k-j)\rho}_{i} \nu^{(j)}_{i} \right\|_{l+1} \leq N \left| h\right|^{k-j+1} \left\| \nu^{(j)}_{i} \right\|_{l+k-j+3} \leq N \left| h\right|^{k-j+1} \left\| \nu^{(j)}_{i} \right\|_{m-3j-1}.
\end{gather}
Now using Theorem \ref{thm: solvability of system with estimate}, we see that 
\begin{equation*}
\left\llbracket \mathfrak{f}^{h} \right\rrbracket_{l+1}^{2} + \left\llbracket \mathfrak{g}^{h} \right\rrbracket_{l+1}^{2} \leq N \left| h\right|^{2(k+1)} \mathcal{K}_{m}^{2}
\end{equation*}
which, when taken together with \eqref{eqn: intermediate estimate for the error}, implies \eqref{eqn: estimate for the error}.

If, in addition, $\mathfrak{p}^{\lambda} = \mathfrak{q}^{\lambda} = 0$ for $\lambda \in \Lambda_{0}$, then as in Theorem \ref{thm: solvability of system with estimate} it follows that $\nu^{(j)} = 0$ for all odd $j \leq k$. If $k$ is odd, then clearly $\nu^{(k)} = 0$ and \eqref{eqn: intermediate estimate on O and R used for estimate for the error} holds for $j = k$ and also for $j \geq k-1$ and we need only $m = 3k + 1 + l$. We mention further that $\nu^{(j)} = 0$ for all odd $j \leq k$  in the case $\mathfrak{p}^{\lambda} = \mathfrak{q}^{\lambda} = 0$ for $\lambda \in \Lambda_{0}$ also follows from \eqref{eqn: estimate for the error}, now valid for all nonzero $h$, since $\nu^{h}$ and $\nu^{-h}$ are the $L^{2}$-valued solutions to \eqref{eqn: space-time scheme} with initial condition $v_{0}$ and we must have $\nu^{h} = \nu^{-h}$ due to the uniqueness of solutions.
\end{proof}

We have the following corollary to Theorem \ref{thm: estimate for the error} which implies Theorem \ref{thm: generalized expansion} and hence Theorem \ref{thm: expansion}. Let $R^{\tau,h} := \mathcal{I} \mathfrak{r}^{\tau,h}$, where $\mathcal{I}$ is the embedding operator from $\ref{lem: embedding}$, and notice that $R^{\tau,h}_{i} \in \ell^{2}(\mathbf{G}_{h})$ for all $i \in \{0, \dots, n\}$.

\begin{cor}\label{cor: estimate for difference of expansion error}
If in Theorem \ref{thm: estimate for the error} we have $l > p + d/2$ for integer $p \geq 0$, then, for $\lambda \in \Lambda^{p}$ and $h > 0$,
\begin{equation*}
\mexp \max_{i \leq n} \sup_{x \in \mathbf{R}^{d}} \left| \delta_{h,\lambda} R^{\tau,h}_{i}(x) \right|^{2} \leq N h^{2(k+1)} \mathcal{K}_{m}^{2}
\end{equation*}
holds with a constant $N = N(d, d_{1},d_{2}, m, l, T, K_{0}, \dots, K_{m+2}, \hat{K}_{0}, \dots, \hat{K}_{l+2}, C_{m},\Lambda)$. 
\end{cor}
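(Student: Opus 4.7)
The plan is to pass from the $W^{l}_{2}$ estimate for $\mathfrak{r}^{\tau,h}_i$ already supplied by Theorem \ref{thm: estimate for the error} to the pointwise estimate for $\delta_{h,\lambda} R^{\tau,h}_i$ by combining two ingredients: the Sobolev embedding $W^{s}_{2}\subset\mathcal{C}_{b}$ (valid for $s>d/2$), which sits behind the operator $\mathcal{I}$, and a uniform-in-$h$ bound on compositions of finite differences given by Remark \ref{rmk: differences are bounded by derivatives}.

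First I would observe that, for $\lambda=(\lambda_{1},\dots,\lambda_{p})\in\Lambda^{p}$, the operator $\delta_{h,\lambda}=\delta_{h,\lambda_{1}}\cdots\delta_{h,\lambda_{p}}$ commutes with spatial derivatives, so applying Remark \ref{rmk: differences are bounded by derivatives} iteratively to $\phi$ and to its derivatives $D^{\rho}\phi$ with $|\rho|\leq l-p$ yields
\begin{equation*}
\|\delta_{h,\lambda}\phi\|_{l-p}\leq N\|\phi\|_{l}
\end{equation*}
for any $\phi\in W^{l}_{2}$, with $N$ depending only on $p$ and $|\Lambda_{0}|^{2}$. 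Applying this to $\phi=\mathfrak{r}^{\tau,h}_i$ (which lies in $W^{l}_{2}$ by Lemma \ref{lem: the error satisfies the equation for the space-time scheme}) produces $\delta_{h,\lambda}\mathfrak{r}^{\tau,h}_i\in W^{l-p}_{2}$.

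Next, because $l-p>d/2$ by hypothesis, the continuous representative $\mathcal{I}(\delta_{h,\lambda}\mathfrak{r}^{\tau,h}_i)$ exists and satisfies the supremum bound $\sup_{x\in\mathbf{R}^{d}}|\mathcal{I}\psi(x)|\leq N\|\psi\|_{l-p}$. Since $\mathcal{I}$ is linear and the finite differences $\delta_{h,\lambda}$ are defined pointwise from shifts $T_{h,\lambda}$, one has $\mathcal{I}(\delta_{h,\lambda}\mathfrak{r}^{\tau,h}_i)=\delta_{h,\lambda}\mathcal{I}\mathfrak{r}^{\tau,h}_i=\delta_{h,\lambda}R^{\tau,h}_i$ pointwise. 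Chaining the two displayed inequalities gives
\begin{equation*}
\sup_{x\in\mathbf{R}^{d}}\bigl|\delta_{h,\lambda}R^{\tau,h}_i(x)\bigr|\leq N\|\delta_{h,\lambda}\mathfrak{r}^{\tau,h}_i\|_{l-p}\leq N\|\mathfrak{r}^{\tau,h}_i\|_{l}
\end{equation*}
for every $i\in\{0,\dots,n\}$.

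To close, I would square, take $\max_{i\leq n}$ and then expectation, and invoke the bound $\mexp\max_{i\leq n}\|\mathfrak{r}^{\tau,h}_i\|_{l}^{2}\leq Nh^{2(k+1)}\mathcal{K}_{m}^{2}$ of Theorem \ref{thm: estimate for the error}. No genuine obstacle arises here: the argument is a direct assembly of the Sobolev embedding, Remark \ref{rmk: differences are bounded by derivatives}, and Theorem \ref{thm: estimate for the error}, so the only mild bookkeeping is propagating the dependence of the constant on $p$ and $|\Lambda_{0}|$ into the stated constant $N$.
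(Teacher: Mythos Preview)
Your proposal is correct and follows essentially the same route as the paper: Sobolev embedding of $W^{l-p}_{2}$ into $\mathcal{C}_{b}$ combined with Remark \ref{rmk: differences are bounded by derivatives} to pass from $\sup_{x}|\delta_{h,\lambda}R^{\tau,h}_{i}(x)|$ to $\|\mathfrak{r}^{\tau,h}_{i}\|_{l}$, and then Theorem \ref{thm: estimate for the error}. The only cosmetic difference is that the paper phrases the intermediate step via the discrete norm $\|\mathfrak{r}^{\tau,h}_{i}\|_{l-p,p,h}$, whereas you work directly with $\|\delta_{h,\lambda}\mathfrak{r}^{\tau,h}_{i}\|_{l-p}$; your explicit remark that $\mathcal{I}$ commutes with $\delta_{h,\lambda}$ is a welcome clarification.
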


\begin{proof}
By Sobolev's embedding of $W^{l-p}_{2}$ into $\mathcal{C}_{b}$ and Remark \ref{rmk: differences are bounded by derivatives} we have that 
\begin{equation*}
\mexp \max_{i\leq n} \sup_{x \in \mathbf{R}^{d}} \left| \delta_{h,\lambda} R^{\tau,h}_{i}(x) \right|^{2} \leq N \mexp \max_{i \leq n} \left\| \mathfrak{r}^{\tau,h}_{i} \right\|_{l-p, p, h}^{2}
	\leq N \mexp \max_{i \leq n} \left\| \mathfrak{r}^{\tau,h}_{i} \right\|_{l}^{2}
\end{equation*}
which, together with Theorem \ref{thm: estimate for the error}, yields the desired result.
\end{proof}

It only remains to explain how the corollary implies Theorem \ref{thm: generalized expansion} and hence Theorem \ref{thm: expansion}. Using Sobolev's embedding, we find continuous versions of the $L^{2}$-valued solutions to the space-time scheme, the time scheme, and the system of time discretized equations. Then we notice that the restriction of the $L^{2}$-valued solution to \eqref{eqn: space-time scheme} onto $\mathbf{G}_{h}$ agrees with the unique $\ell^{2}(\mathbf{G}_{h})$ valued solution. This argument can be found in \cite{Hall:2012}, nevertheless we reproduce it here for the convenience of the reader. 

For $\mathcal{I}: W^{l}_{2} \to \mathcal{C}_{b}$ from Lemma \ref{lem: embedding}, Theorem \ref{thm: generalized expansion} follows by considering the embeddings $\hat{v}^{h} := \mathcal{I} \nu^{h}$, where $\nu^{h}$ is the unique $L^{2}$-valued solution to \eqref{eqn: space-time scheme} with initial condition $v_{0}$, and $v^{(j)} := \mathcal{I} \nu^{(j)}$, for $j \in \{ 0, \dots, k\}$, where $\nu^{(0)}$ is the unique $L^{2}$-valued solution to \eqref{eqn: time scheme} with initial condition $v_{0}$ and the processes $\nu^{(1)}$, \dots, $\nu^{(k)}$ are the solutions to the system of time discretized SPDEs \eqref{eqn: system} as given in Theorem \ref{thm: solvability of system with estimate}. By Theorem \ref{thm: estimate for space-time scheme}, $\nu^{h}$ is $W^{l}_{2}$-valued and $\mathcal{F}_{i}$-measurable for all $i \in \{1, \dots, n\}$. For each $j \in \{ 1, \dots, k\}$, the $\nu^{(j)}$ are $W^{m-3k}$-valued processes by Theorem \ref{thm: solvability of the time scheme with estimate}. Since $l > d/2$ and $m-3k > d/2$, the processes $\hat{v}^{h}$ and $v^{(j)}$ are well defined and clearly \eqref{eqn: equation for the error function} implies \eqref{eqn: expansion} with $\hat{v}^{h}$ in place of $v^{h}$. That is, we have the expansion for a continuous version of the $L^{2}$-valued solution.

Next we show that the restriction of the $L^{2}$-valued solution to the grid $\mathbf{G}_{h}$, a set of Lebesgue measure zero,  is indeed equal almost surely to the unique $\ell^{2}(\mathbf{G}_{h})$-valued solution that one would naturally obtain from \eqref{eqn: space-time scheme}. That is, we show that
\begin{equation}
\label{eqn: equality of the restriction} 
\hat{v}^{h}_{i} (x) = v^{h}_{i}(x)
\end{equation} 
almost surely for all $i \in \{1, \dots, n\}$ and for all $x \in \mathbf{G}_{h}$ where $v^{h}$ is the unique $\mathcal{F}_{i}$-measurable $\ell^{2}(\mathbf{G}_{h})$-valued solution of \eqref{eqn: space-time scheme} from  Theorem \ref{thm: estimate for space-time scheme}. Therefore, for a compactly supported nonnegative smooth function $\phi$ on $\mathbf{R}^{d}$ with unit integral and for a fixed $x \in \mathbf{G}_{h}$ we define $$\phi_{\varepsilon}(y) := \phi\left( \frac{y-x}{\varepsilon} \right)$$ for $y \in \mathbf{R}^{d}$ and $\varepsilon > 0$. Recall, by Remark \ref{rmk: on Sobolev's embedding}, that we can obtain versions of $v_{0}$, $f$, and $g^{\rho}$, for $\rho \in \{1, \dots, d_{1}\}$, that are continuous in $x$. Since $\hat{v}^{h}$ is a $L^{2}$-valued solution of \eqref{eqn: space-time scheme} for each $\varepsilon$, almost surely
\begin{equation*}
\begin{split}
\int_{\mathbf{R}^{d}} \hat{v}^{h}_{i}(y) \phi_{\varepsilon}(y) \, \mathrm{d}y = \int_{\mathbf{R}^{d}} \hat{v}^{h}_{i-1}(y) \phi_{\varepsilon}(y) \, \mathrm{d}y + \tau \int_{\mathbf{R}^{d}} (L^{h}_{i}\hat{v}^{h}_{i} + f_{i})(y) \phi_{\varepsilon}(y) \, \mathrm{d}y\\
	 + \sum_{\rho=1}^{d_{1}}\xi^{\rho}_{i} \int_{\mathbf{R}^{d}} ( M^{h,\rho}_{i-1} \hat{v}^{h}_{i-1} + g^{\rho}_{i-1})(y) \phi_{\varepsilon}(y) \, \mathrm{d}y
\end{split}
\end{equation*}
for each $i \in \{1, \dots, n\}$. Letting $\varepsilon \to 0$, we see that both sides converge for all $i \in \{1, \dots, n\}$ and $\omega \in \Omega$. Therefore, almost surely 
\begin{equation*}
\hat{v}^{h}_{i}(x) = \hat{v}^{h}_{i-1}(x) + \left(L^{h}_{i}\hat{v}^{h}_{i}(x) + f_{i}(x)\right) \tau + \sum_{\rho=1}^{d_{1}}\left( M^{h,\rho}_{i-1} \hat{v}^{h}_{i-1}(x) + g^{\rho}_{i-1}(x) \right)\xi^{\rho}_{i}
\end{equation*}
for all $i \in \{1, \dots, n\}$. Moreover, by Lemma \ref{lem: embedding}, the restriction of $\hat{v}^{h}$, the continuous version of $\nu^{h}$, onto $\mathbf{G}_{h}$ is an $\ell^{2}(\mathbf{G}_{h})$-valued process. Hence, \eqref{eqn: equality of the restriction} holds, due to the uniqueness of the $\mathcal{F}_{i}$-measurable $\ell^{2}(\mathbf{G}_{h})$-valued solution of \eqref{eqn: space-time scheme} for any $\mathcal{F}_{0}$-measurable $\ell^{2}(\mathbf{G}_{h})$-valued initial data. This finishes the proof of Theorems \ref{thm: generalized expansion} and  \ref{thm: expansion}.

\section{Acknowledgements}\label{sec: Acknowledgements}
The author would like to express gratitude towards his supervisor, Professor Istv{\'a}n Gy{\"o}ngy, for the guidance and encouragement offered during the preparation of these results, which  form part of the author's Ph.D.\ thesis.




\begin{thebibliography}{10}

\bibitem{BainCrisan:2009}
{\sc A.~Bain and D.~Crisan}, {\em Fundamentals of {S}tochastic {F}iltering},
  Stochastic Modelling and Applied Probability 60, Springer, New York,
  2009.

\bibitem{Brezinski:2000}
{\sc C.~Brezinski}, {\em Convergence acceleration during the 20th century}, J.
  Comput. Appl. Math., 122 (2000), pp.~1--21.

\bibitem{DongKrylov:2005}
{\sc H.~Dong and N.~V.~Krylov}, {\em Rate of convergence of
  finite-difference approximations for degenerate linear parabolic equations
  with {$C^1$} and {$C^2$} coefficients}, Electron. J. Differential Equations, 2005
  (2005), No.~102, pp.~1--25.

\bibitem{Gyongy:2011}
{\sc I.~Gy{\"o}ngy}, {\em On finite difference schemes for degenerate
  stochastic parabolic partial differential equations}, Journal of Mathematical
  Sciences, 179 (2011), pp.~100--126.

\bibitem{GyongyKrylov:2010}
{\sc I.~Gy{{\"o}}ngy and N.~V.~Krylov}, {\em Accelerated finite
  difference schemes for linear stochastic partial differential equations in
  the whole space}, SIAM J. Math. Anal., 42 (2010), pp.~2275--2296.

\bibitem{GyongyKrylov:2011}
\leavevmode\vrule height 2pt depth -1.6pt width 23pt, {\em Accelerated finite
  difference schemes for second order degenerate elliptic and parabolic
  problems in the whole space}, Math. Comp., 80 (2011), pp.~1431--1458.

\bibitem{GyongyMillet:2007}
{\sc I.~Gy{{\"o}}ngy and A.~Millet}, {\em Rate of convergence of
  implicit approximations for stochastic evolution equations}, in Stochastic
  Differential Equations: Theory and Applications, Interdiscip. Math.
  Sci. 2, World Science Publishers, Hackensack, NJ, 2007.

\bibitem{Hall:2012}
{\sc E.~J.~Hall}, {\em Accelerated spatial approximations for time
  discretized stochastic partial differential equations}, SIAM J. Math. Anal.,
  44 (2012), pp.~3162--3185.

\bibitem{JentzenKloeden:2009}
{\sc A.~Jentzen and P.~E.~Kloeden}, {\em The numerical approximation of
  stochastic partial differential equations}, Milan J. Math., 77 (2009),
  pp.~205--244.

\bibitem{Joyce:1971}
{\sc D.~C.~Joyce}, {\em Survey of extrapolation processes in numerical
  analysis}, SIAM Rev., 13 (1971), pp.~435--490.

\bibitem{KrylovRozovskii:1977}
{\sc N.~V.~Krylov and B.~L.~Rozovski{\u\i}}, {\em The {C}auchy problem for
  linear stochastic partial differential equations}, Math. USSR Izv., 11
  (1977), pp.~1267--1284.

\bibitem{KrylovRozovskii:1982}
\leavevmode\vrule height 2pt depth -1.6pt width 23pt, {\em Characteristics of
  second-order degenerate parabolic {I}t\^o equations}, Trudy Sem. Petrovsk.,
  (1982), pp.~153--168.

\bibitem{Kunita:1981}
{\sc H.~Kunita}, {\em Cauchy problem for stochastic partial differential
  equations arising in nonlinear filtering theory}, Systems Control Lett., 1
  (1981/82), pp.~37--41.

\bibitem{MarchukShaidurov:1983}
{\sc G.~I.~Marchuk and V.~V.~Sha{\u\i}durov}, {\em Difference Methods and Their
  Extrapolations}, Stochastic Modelling and Applied Probability 19,
  Springer-Verlag, New York, 1983.

\bibitem{MortonMayers:1994}
{\sc K.~W.~Morton and D.~F.~Mayers}, {\em Numerical Solution of Partial
  Differential Equations}, Cambridge Univ. Press, Cambridge, 1994.

\bibitem{Pardoux:1975th}
{\sc {{\'E}}.~Pardoux}, {\em {{\'E}}quations aux {D}{{\'e}}riv{{\'e}}es
  {P}artielles {S}tochastiques {N}on {L}in{{\'e}}aires {M}onotones: {{\'E}}tude
  de {S}olutions {F}ortes de {T}ype {I}to}, {T}h{{\`e}}se, Univ. Paris
  Sud, Orsay, 1975.

\bibitem{Pardoux:1979}
\leavevmode\vrule height 2pt depth -1.6pt width 23pt, {\em Stochastic partial 
  differential equations and filtering of diffusion processes}, Stochastics, 
  3 (1979), pp.~127--167.

\bibitem{Richardson:1911}
{\sc L.~F.~Richardson}, {\em The approximate arithmetical solution by finite
  differences of physical problems involving differential equations, with an
  application to the stresses in a masonry dam}, Philos. Trans.
  R. Soc. Lond. Ser. A Math. Phys. Engrg. Sci., 210 (1911), pp.~307--357.

\bibitem{RichardsonGaunt:1927}
{\sc L.~F.~Richardson and J.~A.~Gaunt}, {\em The deferred approach to the limit.
  Part I. Single lattice. Part II. Interpenetrating lattices}, Philos. Trans.
  R. Soc. Lond. Ser. A Math. Phys. Engrg. Sci., 226 (1927), pp.~299--361.

\bibitem{RichtmyerMorton:1967}
{\sc R.~D.~Richtmyer and K.~W.~Morton}, {\em Difference Methods for
  Initial-Value Problems}, Second edition. Tracts in Pure and
  Applied Mathematics 4, Interscience, New York, 1967.

\bibitem{Rozovskii:1990}
{\sc B.~L.~Rozovski{\u\i}}, {\em Stochastic Evolution Systems: Linear Theory and 
  Applications to Nonlinear Filtering}, Mathematics and its Applications 
  (Soviet Series) 35, Kluwer Academic Publishers, Dordrecht, The Netherlands, 1990.

\bibitem{Sidi:2003}
{\sc A.~Sidi}, {\em Practical Extrapolation Methods: Theory and applications}, 
  Cambridge Monographs on Applied and Computational Mathematics 10, Cambridge 
  Univ. Press, Cambridge, 2003.

\bibitem{Thomee:1990}
{\sc V.~Thom{{\'e}}e}, {\em Finite difference methods for linear parabolic
  equations}, in Handbook of Numerical Analysis, {V}ol.\ {I}, North-Holland, 
  Amsterdam, 1990.

\bibitem{Zakai:1969}
{\sc M.~Zakai}, {\em On the optimal filtering of diffusion processes}, Z.
  Wahrscheinlichkeitstheorie und Verw. Gebiete, 11 (1969), pp.~230--243.

\end{thebibliography}
\end{document}